\newcommand{\jjhidtodo}[1]{}
\newcommand{\niy}[1]{\todo[inline,color=yellow!40,bordercolor=orange]{\texttt{Not implemented yet}}}
\def\thm@space@setup{\thm@preskip=4pt
\thm@postskip=2pt}
\numberwithin{equation}{section}
\newtheorem{theorem}[equation]{Theorem}
\newtheorem{corollary}[equation]{Corollary}
\newtheorem{proposition}[equation]{Proposition}
\newtheorem{lemma}[equation]{Lemma}
\theoremstyle{definition}
\newtheorem{definition}[equation]{Definition}
\newtheorem{example}[equation]{Example}
\newtheorem{remark}[equation]{Remark}
\newcommand{\cP}{{\mathcal P}}
\newcommand{\cN}{{\mathcal N}}
\newcommand{\cM}{{\mathcal M}}
\newcommand{\goodquotient}{\mathbin{
  \mathchoice{\left/\mkern-6mu\right/}% \displaystyle
    {/\mkern-5mu/}% \textstyle
    {/\mkern-5mu/}% \scriptstyle
    {/\mkern-5mu/}}}% \scriptscriptstyle
\newcommand{\Hom}{\mathrm{Hom}}
\newcommand{\Spec}{\mathrm{Spec}\,}
\newcommand{\Sym}{\textrm{Sym}}
\newcommand{\Mor}{\textrm{Mor}}
\newcommand{\End}{\textrm{End}}
\newcommand{\Set}{\bd{Set}}
\newcommand{\Alg}{\bd{Alg}}
\newcommand{\Mod}{\bd{Mod}}
\newcommand{\Rep}{\bd{Rep}}
\DeclareMathOperator{\Qcoh}{\operatorname{Qcoh}}%
\DeclareMathOperator{\pr}{pr}% projection
\DeclareMathOperator{\GL}{GL}%
\DeclareMathOperator{\SL}{SL}%
\newcommand{\lambdachi}{\lambda\mbox{-}\Chi}
\newcommand{\bd}[1]{\mathbf{#1}}  % for bolding symbols
\newcommand{\mm}{\mathfrak{m}}%
\newcommand{\Group}{\mathbf{G}}%
\newcommand{\Gbar}{\overline{\mathbf{G}}}%
\newcommand{\Tbar}{\overline{\mathbf{T}}}%
\newcommand{\Gmult}{\mathbb{G}_m}%
\newcommand{\Gmultkkbar}{\mathbb{G}_{m,\kkbar}}%
\newcommand{\kk}{k}%
\newcommand{\kkbar}{\overline{k}}%
\newcommand{\varX}{X}%
\newcommand{\Xplus}{\varX^+}%
\newcommand{\ione}[1]{i_{#1}}%
\newcommand{\ioneX}{\ione{\varX}}
\newcommand{\iinfty}[1]{\pi_{#1}}%
\newcommand{\iinftyX}{\iinfty{\varX}}%
\newcommand{\isection}[1]{s_{#1}}%
\newcommand{\isectionX}{\isection{\varX}}%
\newcommand{\BBname}{Bia{\l}ynicki-Birula}%
\DeclareMathOperator{\Map}{Map}
\DeclareMathOperator{\colim}{colim}
\DeclareMathOperator{\id}{id}%
\newcommand{\into}{\hookrightarrow}%
\newcommand{\onto}{\twoheadrightarrow}%
\newcommand{\mubar}{\overline{\mu}}%
\newcommand{\RepG}{\Rep_{\Group}}%
\newcommand{\RepGbar}{\Rep_{\Gbar}}%
\newcommand{\RepGlambda}{\Rep_{\Group}[\lambda]}%
\newcommand{\QcohG}[1]{\Qcoh_{\Group}(#1)}%
\newcommand{\QcohGft}[1]{\Qcoh_{\Group}^{\mathrm{ft}}(#1)}%
\newcommand{\QcohGX}{\QcohG{\varX}}%
\newcommand{\OO}{\mathcal{O}}%
\newcommand{\tensor}{\otimes}%
\renewcommand{\AA}{\mathcal{A}}%
\newcommand{\MM}{\mathcal{M}}%
\newcommand{\Chi}{\mathcal{X}}%
\newcommand{\Xhat}{\hat{X}}%
\begin{document}

\title{Bia{\l}ynicki-Birula decomposition for reductive groups in positive characteristic}
%\title{Białynicki-Birula functors}
\author{Joachim Jelisiejew}
\thanks{Institute of Mathematics, University of Warsaw, \url{jjelisiejew@mimuw.edu.pl}, supported by Polish National Science
Center, project 2017/26/D/ST1/00755 and Institute of Mathematics, Polish
Academy of Sciences.}

    \author{{\L{}}ukasz Sienkiewicz}
    \thanks{Institute of Mathematics, University of Warsaw,
        \url{lusiek@mimuw.edu.pl}}

\begin{abstract}
    We prove the existence of \BBname{} decomposition for \emph{Kempf
    monoids}, which form a large class that contains all monoids with
    reductive unit group in all characteristics. This extends the existence
    statements from~\cite{jelisiejew_sienkiewicz__BB, AHR}.
\end{abstract}

\maketitle

\section{Introduction}

    Let $\kk$ be a field.
    By a \emph{linear group} $\Group$ we mean a smooth affine group scheme of finite
    type over $\kk$. When talking about a $\Group$-action we always mean a left
    $\Group$-action.
    Let $\Group$ be a linear group and $\Gbar$ be a geometrically integral affine algebraic monoid with
    zero and with unit group $\Group$.
    For a fixed $\Group$-scheme $X$ over $\kk$, its \BBname{} decomposition is a
    set-valued functor $\Xplus := \Map(\Gbar, X)^{\Group}$. Explicitly, for a
    $\kk$-scheme $S$ which we endow with the trivial $\Group$-action, the functor
    is
    given by
    \[
        \Xplus(S) := \left\{ \varphi\colon \Gbar \times S \to X\ |\
            \varphi\mbox{ is }\Group\mbox{-equivariant}\right\}.
    \]
    Intuitively, the functor $\Xplus$ parameterizes $\Group$-orbits in $X$ which
    compactify to $\Gbar$-orbits. Indeed, its $\kk$-points are
    $\Group$-equivariant morphisms $f\colon\Gbar \to X$.
    The restrictions $f\mapsto f(1)\in X$ and $f\mapsto f(0)\in X^{\Group}$
    give maps $\Xplus(\kk)\to X(\kk)$ and $\Xplus(\kk)\to X^{\Group}(\kk)$.
    The map $f$ is a partial compactification of the $\Group$-orbit of $f(1)$ and
    $f(0)$ is a ``limit'' or the most degenerate point of this compactification.
    The evaluations at zero and one extend to maps of functors:
    \begin{itemize}
        \item the map $\ioneX\colon\Xplus \to X$ that sends $\varphi$ to $\varphi_{|1 \times S} \colon S\to X$.
        \item the \emph{limit map} $\iinftyX\colon \Xplus \to X^{\Group}$ that
            sends $\varphi$ to $\varphi_{|0\times S}\colon S\to X^{\Group}$. The
            limit map has a section $\isectionX$ that sends
            $\varphi_0\colon S \to X^{\Group}$ to the constant family $\varphi
            = \varphi_0 \circ pr_2\colon \Gbar \times S\to X$.
    \end{itemize}
    Altogether, we obtain the diagram
    $\begin{tikzcd}
            X^{\Group}\arrow[r, bend left, "\isectionX"] & \Xplus \arrow[r, "\ioneX"]\arrow[l, "\iinftyX"] & X
        \end{tikzcd}$.
    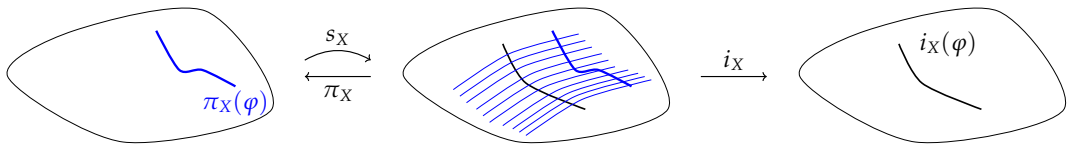
\begin{figure}[h]
        \[\resizebox{0.95\textwidth}{!}{
                \begin{tikzpicture}
                    \draw plot [smooth cycle] coordinates {(0, 0) (2, 1.4) (6,
                        2) (8,
                    -1) (3.5, -2)};
                    \draw[line width=0.5mm] plot [smooth] coordinates
                    {(3, 1) (3.7, -0.2) (5.5, -1)};
                    \draw[blue] plot [smooth] coordinates {(5.3,
                    1.3) (3.2, 0.7) (1.5, -0.4)};
                    \begin{scope}[shift={(0.2, -0.2)}]
                        \draw[blue] plot [smooth] coordinates {(5.3,
                        1.3) (3.2, 0.7) (1.5, -0.4)};
                    \end{scope};
                    \begin{scope}[shift={(0.4, -0.4)}]
                        \draw[blue] plot [smooth] coordinates {(5.3,
                        1.3) (3.2, 0.7) (1.5, -0.4)};
                    \end{scope};
                    \begin{scope}[shift={(0.7, -0.6)}]
                        \draw[blue] plot [smooth] coordinates {(5.3,
                        1.3) (3.2, 0.7) (1.5, -0.4)};
                    \end{scope};
                    \begin{scope}[shift={(0.9, -0.8)}]
                        \draw[blue] plot [smooth] coordinates {(5.3,
                        1.3) (3.2, 0.7) (1.5, -0.4)};
                    \end{scope};
                    \begin{scope}[shift={(1.1, -1)}]
                        \draw[blue] plot [smooth] coordinates {(5.3,
                        1.3) (3.2, 0.7) (1.5, -0.4)};
                    \end{scope};
                    \begin{scope}[shift={(1.5, -1.1)}]
                        \draw[blue] plot [smooth] coordinates {(5.3,
                        1.3) (3.2, 0.7) (1.5, -0.4)};
                    \end{scope};
                    \begin{scope}[shift={(1.8, -1.2)}]
                        \draw[blue] plot [smooth] coordinates {(5.3,
                        1.3) (3.2, 0.7) (1.5, -0.4)};
                    \end{scope};
                    \begin{scope}[shift={(2, -1.3)}]
                        \draw[blue] plot [smooth] coordinates {(5.3,
                        1.3) (3.2, 0.7) (1.5, -0.4)};
                    \end{scope};
                    \begin{scope}[shift={(2.2, -1.4)}]
                        \draw[blue] plot [smooth] coordinates {(5.3,
                        1.3) (3.2, 0.7) (1.5, -0.4)};
                    \end{scope};

                    \draw[blue, line width=0.7mm] plot [smooth] coordinates
                    {(4.5, 1.4) (5.2, 0.2) (5.9, 0.2) (6.9, -0.3)};

                    \path[->,line width=1pt] (9,0) edge (11,0);
                    \node (v1) at (10, 0.5) {{\Huge $\ione{\varX}$}};
                    \begin{scope}[shift={(12, 0)}]
                        \draw plot [smooth cycle] coordinates {(0, 0) (2, 1.4) (6,
                            2) (8,
                        -1) (3.5, -2)};
                        \draw[line width=0.5mm] plot [smooth] coordinates {(3,
                        1) (3.7, -0.2) (5.5, -1)};
                        \node at (4.5, 1) {{\Huge $\ioneX(\varphi)$}};
                    \end{scope}

                    \path[->,line width=1pt] (-1,0) edge (-3,0);
                    \node (v3) at (-2, 1.2) {{\Huge $\isection{\varX}$}};
                    \path[->,line width=1pt] (-3,0.5)[bend left] edge (-1,0.5);
                    \node (v2) at (-2, -0.5) {{\Huge $\iinfty{\varX}$}};
                    \begin{scope}[shift={(-12, 0)}]
                        \draw plot [smooth cycle] coordinates {(0, 0) (2, 1.4) (6,
                            2) (8,
                        -1) (3.5, -2)};
                        \draw[blue, line width=0.7mm] plot [smooth] coordinates
                        {(4.5, 1.4) (5.2, 0.2) (5.9, 0.2) (6.9, -0.3)};
                        \node[blue] at (6.9, -0.8) {{\Huge $\iinftyX(\varphi)$}};
                    \end{scope}
            \end{tikzpicture}}\]
            \caption{The natural maps associated to $\Xplus$. The blue curves
            in the middle denote the $\Gbar$-orbits, while the transversal
        black curve is $S$ and the thick blue curve is the limit of $S$.}\label{eq:diagramWhatIsGoingOn}
        \end{figure}

    To obtain the classical positive (resp.~negative) \BBname{}
    decomposition~\cite{BialynickiBirula__decomposition} we take a smooth
    proper $X$
    and pairs $(\Group, \Gbar) = (\Gmult, \mathbb{A}^1 = \Gmult\cup \{0\})$ and $(\Group,
    \Gbar) = (\Gmult, \mathbb{A}^1 = \Gmult\cup \{\infty\})$ respectively.
    For a connected linearly reductive group $\Group$, the functor $\Xplus$ is
    represented by a scheme, as proven
    in~\cite{jelisiejew_sienkiewicz__BB} and $\Xplus$ is smooth for smooth
    $X$. The proof of representability proceeds in three steps
    \begin{enumerate}[label=(\Roman*)]
        \item\label{stepMain:one} introduce a formal version $\Xhat$ of the functor $\Xplus$ and prove its
            representability. The stage for this part is the formal
            neighbourhood of $X^{\Group}$, hence the question becomes
            essentially affine and the representation theory of $\Group$ plays a
            central role,
        \item prove that the formalization map $\Xplus\to \Xhat$ is an
            isomorphism for affine schemes,
        \item\label{stepMain:three} for a $\Group$-scheme $X$ find an affine $\Group$-equivariant
            \'etale cover of fixed points of $X$ and use an easy descent
            argument to show that the natural map $\Xplus\to \Xhat$ is an
            isomorphism. The existence of such a cover is proven in~\cite{AHR}, which
            crucially depends on the linear reductivity of $\Group$, see~\cite[Prop~3.1]{AHR}.
    \end{enumerate}
    In positive characteristic the only connected linearly reductive groups
    are tori. Thus it is a natural question whether one could extend those
    existence results to reductive groups in positive characteristic.
    This seems also interesting from the point of view of geometric
    representation theory, similarly to how the $\Gmult$-case is used in~\cite{Drinfeld_Gaitsgory}.

    The aim of the present article is to prove that $\Xplus$ is representable
    for a large class of algebraic monoids $\Gbar$ with zero, so called Kempf
    monoids.
    An algebraic monoid $\Gbar$ with zero is a \emph{Kempf monoid} if there exists a central one-parameter
    subgroup $\Gmultkkbar \to Z(\Group)$ such that the induced
    map $\Gmultkkbar \to \Gbar$ extends to a map
    $\mathbb{A}^1_{\kkbar}\to \Gbar$ that sends $0$ to $0_{\Gbar}$.
    We prove that every monoid $\Gbar$ with
    zero and with reductive unit group
    is a Kempf monoid. We stress that we make no assumptions on the characteristic.
    \emph{Reductive} means as usual that the unipotent radical of $\Group$ is
    trivial. There are plenty of Kempf monoids with non-reductive unit
    group as well, such as the monoid of upper-triangular matrices.
    The main result of this paper is the following representability
    result.
    \begin{theorem}\label{ref:intro:mainthm}
        Let $\Gbar$ be a Kempf monoid with zero and with unit group $\Group$ and $X$
        be a Noetherian $\Group$-scheme over $\kk$. Then the functor $\Xplus$ is representable and
        affine of finite type over $X^{\Group}$.
    \end{theorem}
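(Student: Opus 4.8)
The plan is to use the central cocharacter supplied by the Kempf hypothesis to reduce the whole statement to the torus case, which holds in every characteristic, the extra $\Group$-equivariance entering only as a closed condition. First I would fix the central one-parameter subgroup $\lambda\colon\Gmultkkbar\to Z(\Group)$ whose extension $\lambda\colon\mathbb{A}^1_{\kkbar}\to\Gbar$ sends $0$ to $0_{\Gbar}$; working over $\kkbar$ and descending the final assertions by Galois descent at the very end, I may treat $\lambda$ as defined over the base field. Left multiplication by $\lambda$ makes $\Gbar$ into a $\Gmult$-scheme contracting every point to $0_{\Gbar}$, so $A:=H^0(\Gbar,\OO_{\Gbar})=\bigoplus_{n\ge 0}A_n$ is non-negatively graded with $A_0=\kk$. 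The decisive consequence of \emph{centrality} is that left multiplication by $\Group$ commutes with the $\lambda$-action and hence preserves the grading, so each $A_n$ is a finite-dimensional $\Group$-representation. Equally important is the resulting trapping property: for any $\Group$-equivariant $\varphi\colon\Gbar\to X$ one has $\varphi(\lambda(t)\cdot m)=t\cdot\varphi(m)$, so $\lim_{t\to 0}t\cdot\varphi(m)=\varphi(0_{\Gbar})$ for every $m$, i.e.\ the entire image $\varphi(\Gbar)$ lies in the $\lambda$-attractor of the single $\Group$-fixed point $\varphi(0_{\Gbar})\in X^{\Group}$.

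Next I would introduce the torus decomposition $Y:=\Map(\mathbb{A}^1,X)^{\Gmult}$ attached to $\lambda$. Since $\Gmult$ is linearly reductive in all characteristics, the results of \cite{jelisiejew_sienkiewicz__BB, AHR} apply and give that $Y$ is representable and affine of finite type over $X^{\Gmult}$. Restriction along the central line $\lambda\colon\mathbb{A}^1\into\Gbar$ yields a morphism $r\colon\Xplus\to Y$. Because $\Group$ is dense in the geometrically integral monoid $\Gbar$ and $X$ is separated, a $\Group$-equivariant map out of $\Gbar\times S$ is determined by its value along $1\times S$, which agrees with the value at $1\in\mathbb{A}^1$ of its central-line restriction; hence $r$ is a monomorphism, and both $\Xplus$ and $Y$ are identified, via evaluation at $1$, with subfunctors of $X$. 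It then suffices to prove that $r$ is a closed immersion and that the limit map factors through the closed immersion $X^{\Group}\into X^{\Gmult}$, the latter being immediate since $0_{\Gbar}$ is $\Group$-fixed.

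Proving that $r$ is a closed immersion is local on the scheme $Y$ and is where the reduction to the affine case takes place; I expect this to be the main obstacle, as it is exactly the step for which the earlier proof \cite{AHR} invoked affine $\Group$-equivariant \'etale slices resting on linear reductivity. My plan is to let the central contracting $\Gmult$ replace that input. Using a Sumihiro-type theorem for the torus $\lambda(\Gmult)$, which is available in all characteristics, I would cover $X^{\Gmult}$ by $\lambda$-stable affine opens $W=\Spec R_W$; by the trapping property above, every family parameterized by $Y$ (or $\Xplus$) whose limit lands in $W$ has its whole orbit closure inside the \emph{affine} chart $W$, so no $\Group$-stable chart is needed. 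Over such $W$ the extension problem becomes: a morphism $\Group\times(\text{base})\to W$ that already extends along the central $\mathbb{A}^1$ extends over $\Gbar\times(\text{base})$; algebraically this is the condition that the image of $R_W$ in $\OO(\Group)\tensor(\text{base})$ lie in the subalgebra $A\tensor(\text{base})$, a closed condition. The delicate points I would have to check carefully are the behaviour of these charts in families and for non-normal Noetherian $X$, but the essential difficulty is dissolved because the central $\Gmult$ turns the chart problem into one about a torus.

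Granting that $r$ is a closed immersion, the conclusion is formal. As a closed subscheme of $Y$, the functor $\Xplus$ is affine and of finite type over $X^{\Gmult}$; since the structure map factors through the closed immersion $X^{\Group}\into X^{\Gmult}$, cancellation for affine and finite-type morphisms shows that $\iinftyX\colon\Xplus\to X^{\Group}$ is itself affine of finite type. Finally, representability, affineness and finiteness over $X^{\Group}$ are insensitive to the faithfully flat base change $\kk\to\kkbar$, so the statement descends from $\kkbar$ to $\kk$, completing the argument.
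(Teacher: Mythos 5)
Your reduction along the Kempf line contains genuinely good ingredients: the ``trapping'' property is correct (it is the paper's Lemma~\ref{ref:openImmersion:lem}, strengthened from $\Group$-stable to merely $\lambda$-stable opens, with the same proof), and over a $\lambda$-stable \emph{affine Zariski} chart $W$ the condition that a $\Group$-orbit map extend over $\Gbar$ is indeed a closed condition on the base, which is in substance the paper's Propositions~\ref{ref:representabilityForAffine:prop} and~\ref{ref:representabilityForLocLin:prop}. The fatal problem is the step you postpone as a ``delicate point'': covering $X^{\Gmult}$ by $\lambda$-stable affine opens. There is no ``Sumihiro-type theorem available in all characteristics'' at this level of generality --- Sumihiro's theorem requires $X$ normal (and essentially of finite type), while the theorem is asserted for arbitrary Noetherian $\Group$-schemes. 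It genuinely fails otherwise: for a nodal curve obtained by gluing $0$ and $\infty$ in $\mathbb{P}^1$ with its $\Gmult$-action, the node is a fixed point whose only $\Gmult$-stable open neighbourhood is the whole (non-affine) curve. The substitute that does exist in this generality, the $\Gmult$-equivariant affine \emph{\'etale} slices of~\cite{AHR, Drinfeld}, does not rescue your argument: an \'etale chart $U\to X$ does not trap orbits, i.e.\ for a family $\varphi\colon \Gbar\times S\to X$ whose limit lifts to $U^{\Gmult}$, only the restriction to $\mathbb{A}^1\times S$ lifts to $U$ (by the torus theory); the restriction to $\Group\times S$ does not factor through $U$ at all, since $U$ carries no $\Group$-action, so the closed condition ``the image of $R_W$ lands in $A\tensor B$'' cannot even be formulated on the chart. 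This is exactly the breakdown of step~\ref{stepMain:three} described in the introduction, and it is why the paper abandons charts entirely: it works on the infinitesimal thickenings of $X^{\Group}$ (which are automatically locally $\Group$-linear because $\Group$ acts trivially on their underlying topological space), proves a $\Group$-equivariant algebraization theorem there via Serre subcategories and stabilization, and then invokes Hall--Rydh Tannaka duality to produce the map $\Xhat\to X$. Your plan, completed, would prove the theorem when $X^{\Gmult}$ admits a cover by $\lambda$-stable affine opens (e.g.\ $X$ normal of finite type, or quasi-projective with linearized action), but not in the stated generality.

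Two smaller gaps. First, your claim that $r\colon \Xplus\to Y$ is a monomorphism uses separatedness of $X$ (a morphism determined by its restriction to a schematically dense open), which is not among the hypotheses; for a non-separated Noetherian $X$ the equalizer of two such maps is only locally closed, and the same issue affects the closed immersion $X^{\Group}\into X^{\Gmult}$ needed for your cancellation step. Second, the descent from $\kkbar$ to $\kk$ is fine for the \emph{affineness} of $\iinftyX$ (affine morphisms satisfy fpqc descent), but note that the paper instead keeps track of rationality throughout, by working with the possibly non-split one-dimensional torus $T\subset Z(\Group)$ over $\kk$ and base-changing to $\kkbar$ only inside proofs.
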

    This directly extends the previous results for the one-dimensional torus~\cite{Drinfeld, AHR}
    and linearly reductive groups~\cite{jelisiejew_sienkiewicz__BB, AHR}. This
    extension is particularly far-reaching in positive characteristic, where tori
    are the only geometrically connected linear groups. It also clarifies the
    situation in general in that the complicated representation theory for $\Group$
    poses no obstructions to representability.

    In its ideas, the proof proceeds along the
    steps~\ref{stepMain:one}-\ref{stepMain:three} above. However, there are
    two fundamental problems along the way:
    \begin{itemize}
        \item the representation theory for
            $\Group$ is complicated and thus step~\ref{stepMain:one}
            requires much more care. We introduce finitely generated Serre
            subcategories of representations and heavily employ the Kempf torus
            $(\Gmult)_{\kkbar}$ inside $\Group_{\kkbar}$ and the corresponding
            $\mathbb{A}^1_{\kkbar}$ inside $\Gbar_{\kkbar}$.
        \item the analogue of~\ref{stepMain:three} is not known and it is
            clear that the current ideas are insufficient (the
            problems arising are similar to the ones with smoothness discussed
            below). We overcome this by employing Tannakian formalism of
            Hall-Rydh~\cite{HR} to get a map $i_{\Xhat}\colon
            \Xhat\to X$ mimicking the unit map $\ioneX$. In this way, $\Xhat$
            becomes a $\Gbar$-scheme with a map to $X$. This induces a
            section of the formalization map $\Xplus\to \Xhat$ and
            implies that it is an isomorphism.
    \end{itemize}
	The language of stacks is most appropriate
    for~\ref{stepMain:three}. We delegate this part to the appendix in order to make the paper more
    accessible. The Tannaka duality in the required generality follows from
    the results of~\cite{HR}. However~\cite{HR} needs to be applied with care:
    the main results of that article require additional assumptions, so to deduce
    our claim we go into details of their argument.

    In the smooth case, we can say a little more about the morphism $\iinftyX$.
    \begin{proposition}\label{ref:intro:mainSmooth}
        Let $x\in X^{\Group}$ be such that $\iinftyX\colon \Xplus\to
        X^{\Group}$ is smooth at $\isectionX(x)\in \Xplus$. Then locally near
        the point $x$, the map $\iinftyX$ is an affine space fiber bundle with
        an action
        of $\Gbar$ fiberwise.
    \end{proposition}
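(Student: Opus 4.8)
The plan is to make the fiberwise $\Gbar$-action explicit, identify the action of the zero of the monoid with the retraction onto the section, and then reduce the affine-bundle claim to the smooth $\Gmult$-Bia{\l}ynicki-Birula structure via the Kempf torus over $\kkbar$, descending the outcome to $\kk$. First I would construct the action: the monoid $\Gbar$ acts on $\Xplus$ on the right by precomposition, sending $g\in\Gbar$ and $\varphi\in\Xplus(S)$ to $(g\cdot\varphi)(m) = \varphi(mg)$. This is $\Group$-equivariant because right translation commutes with the left $\Group$-action, so it defines an action of $\Gbar$ on $\Xplus$ over $X^{\Group}$. Since $0_{\Gbar}$ is absorbing, $mg=0_{\Gbar}$ when $g=0_{\Gbar}$, so $0_{\Gbar}$ acts as $\varphi\mapsto\varphi(0_{\Gbar}) = \isectionX(\iinftyX(\varphi))$, i.e.\ as the retraction $\isectionX\circ\iinftyX$ onto the section. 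In particular every fiber of $\iinftyX$ is preserved, which is the claimed fiberwise $\Gbar$-action, and $\isectionX(x)$ is a $\Gbar$-fixed point to which the whole fiber is contracted.

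Next I would pass to $\kkbar$ and restrict the action along the Kempf embedding $\mathbb{A}^1_{\kkbar}\to\Gbar_{\kkbar}$. This yields an $\mathbb{A}^1$-action on $(\Xplus)_{\kkbar}$, that is, a $\Gmultkkbar$-action together with a contraction whose $t\to 0$ limit is $\isectionX\circ\iinftyX$; thus $(\Xplus)_{\kkbar}$ is its own attractor with fixed locus $\isectionX(X^{\Group})_{\kkbar}$. Smoothness of $\iinftyX$ at $\isectionX(x)$ is preserved by base change, and since the contraction carries every point into the section while smoothness is an open, $\Gmult$-invariant condition, after shrinking $X^{\Group}$ to an open $U\ni x$ we may assume $\iinftyX$ is smooth over $U_{\kkbar}$. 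Writing $\cA = (\iinftyX)_*\OO$ for the relative coordinate algebra and $\cI\subset\cA$ for the ideal of the section, the $\Gmult$-weight grading on $\cA_{\kkbar}$ is then non-negative with degree-zero part $\OO_{U,\kkbar}$. By graded Nakayama the conormal sheaf $\cN = \cI/\cI^2$ generates, giving a surjection $\Sym_{\OO_{U,\kkbar}}(\cN_{\kkbar})\onto\cA_{\kkbar}$, which smoothness upgrades to an isomorphism by comparing relative dimensions fiberwise. Hence over $\kkbar$ the map $\iinftyX$ is an affine space bundle, compatibly with the $\Gmult$- and $\Gbar$-actions.

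Finally I would descend to $\kk$. The conormal sheaf $\cN$, its symmetric algebra, and the canonical surjection $\Sym_{\OO_U}(\cN)\onto\mathrm{gr}_{\cI}\cA$ are all defined over $\kk$; by the previous paragraph this surjection becomes an isomorphism after the faithfully flat base change $\kk\to\kkbar$, hence is already an isomorphism over $\kk$, so $\mathrm{gr}_{\cI}\cA\cong\Sym_{\OO_U}(\cN)$ and $\cN$ is locally free. The remaining point, which I expect to be the main obstacle, is to split the $\cI$-adic filtration $\Gbar$-equivariantly over $\kk$ so as to promote this to an isomorphism $\cA\cong\Sym_{\OO_U}(\cN)$: over $\kkbar$ the splitting is furnished by the Kempf grading, but that grading depends on the choice of Kempf cocharacter into $Z(\Group)$ and need not be Galois-invariant, so it does not obviously descend.

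I would resolve this by a descent argument, realizing the set of such linearizations as a torsor under the (pro-)unipotent group of filtered automorphisms of $\VV(\cN)$ fixing the zero section; since $U$ is affine this group is a successive extension of vector groups and its torsors are trivial, so a $\kk$-splitting exists. (Alternatively, when a Kempf cocharacter into $Z(\Group)$ is available already over $\kk$, the grading itself descends and splits the filtration directly.) Granting the splitting, $\iinftyX$ is locally the affine space bundle $\VV(\cN)\to U$, and the fiberwise $\Gbar$-action built in the first paragraph endows it with the asserted action of $\Gbar$ fiberwise, which completes the proof.
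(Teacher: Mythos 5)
Your first two paragraphs run parallel to the paper's proof: the fiberwise $\Gbar$-action by right translation, the identification of the action of $0_{\Gbar}$ with $\isectionX\circ\iinftyX$, and the case $\kk=\kkbar$ via the Kempf grading and Corollary~\ref{ref:gradingPositive:cor} (the paper disposes of this case by citing \cite[Lemma~7.2]{jelisiejew_sienkiewicz__BB}, which encapsulates exactly the graded-Nakayama-plus-smoothness argument you sketch). The genuine gap is in your final descent step, and it is precisely at the point you yourself flagged as the main obstacle: your torsor argument conflates $\cA$ with its $\cI$-adic completion. First, an $\OO_U$-module splitting $s\colon \cN\to\cI$ of $\cI\onto\cI/\cI^2$ does \emph{not} in general induce an isomorphism $\Sym_{\OO_U}(\cN)\to\cA$; it induces an isomorphism on associated graded pieces, but for a non-complete filtration this implies neither surjectivity nor bijectivity. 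Concretely, take $\cA=\kk[t]$, $\cI=(t)$, $\cN=(t)/(t^2)$, and the splitting $\bar t\mapsto t+t^2$: the induced endomorphism $t\mapsto t+t^2$ of $\kk[t]$ is the identity on $\mathrm{gr}_{\cI}$, yet its image is the proper subalgebra $\kk[t+t^2]$. Second, and relatedly, the group acting simply transitively on the set of filtered algebra isomorphisms $\cA\to\Sym_{\OO_U}(\cN)$ inducing the canonical map on graded consists of genuine (polynomial, non-completed) automorphisms of $\Sym(\cN)$ that are the identity on the graded, and this group is \emph{not} a successive extension or inverse limit of vector groups. The truncated groups $\mathrm{Aut}\bigl(\Sym(\cN)/\Sym^{\geq m}(\cN)\bigr)$ are indeed unipotent, but an automorphism of a truncation need not lift to an automorphism of $\Sym(\cN)$ (same example), so your group is not their inverse limit; the inverse limit is the automorphism group of the \emph{completion}. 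For $\cN$ of rank $1$ your group is trivial, and for higher rank it is the group of automorphisms of $\VV(\cN)$ inducing the identity on the normal cone of the zero section --- an enormous group (think of automorphisms of $\mathbb{A}^2$) for which no vanishing of $H^1$ over affines is available. At best your argument descends an isomorphism of completions $\hat{\cA}\cong\widehat{\Sym}_{\OO_U}(\cN)$, which is a formal statement and does not give the affine bundle structure on $\iinftyX^{-1}(U)$.

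The paper sidesteps this issue by descending a different structure. It reduces to a \emph{closed} point $x$, notes that over $\kkbar$ the map is near $x$ not merely an affine space fibration but a \emph{trivial} one, i.e.\ the projection of a trivial vector bundle, and then descends the vector bundle ($\GL_n$-torsor) structure from $\kkbar$ to $\kk$, using that torsors under smooth affine groups and affine morphisms satisfy effective descent and that being such a bundle is Zariski-local on the base. If you want to repair your write-up, replace the attempt to split the $\cI$-adic filtration over $\kk$ by descent of the vector-bundle structure near a closed point, as in the paper; the splitting of the filtration is exactly the datum that does not obviously descend, and your pro-unipotent torsor does not rescue it.
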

    It would be desirable to have Proposition~\ref{ref:intro:mainSmooth} for
    every smooth $X$, without any assumptions on $\Xplus$. However, this is
    still open. The main problem is that the
    linear map $\mm_x\to \mm_x/\mm_x^2$ may not have a
    $\Group$-equivariant splitting, so the regularity of $x\in X$ does not
    immediately imply the regularity of
    $x\in \Xplus$, see Example~\ref{ex:problemWithSmoothness}. This is the same issue which implies that $X^{\Group}$ may
    not be smooth for smooth $\Group$-schemes $X$. Indeed, for example $\SL_p$
    acting on itself by conjugation has fixed points $\mu_p$, which is
    non-reduced. In general~\cite{Fogarty_Norman} shows that smoothness of
    fixed
    points for actions on smooth varieties characterizes linearly reductive groups. Very curiously, it seems
    that very few examples of non-smooth fixed points are known and all known
    examples seem to have smooth underlying reduced schemes.
    Also, by Lemma~\ref{ref:degreeZeroImpliesTrivialAction:lemma} below and
    affineness of $\iinftyX$ we have $X^{\Group}_{\kkbar} =
    (\Xplus)^{\Group}_{\kkbar} =
    (\Xplus)^{\Gmult}_{\kkbar}$, so we cannot hope for $\Xplus_{\kkbar}$ to be
    smooth in general, as $\Gmult$-fixed points would be smooth while
    $X^{\Group}_{\kkbar}$ can be singular. However, we do not have examples of
    smooth $X$ such that $\iinftyX\colon \Xplus\to X^{\Group}$ is not smooth.

    \section*{Acknowledgements}

        We thank Torsten Wedhorn and Andrew Salmon for enquiring about
        extending the \BBname{} decomposition to positive characteristic
        that encouraged us to write this paper. We also thank Michel Brion
        and Andrew Salmon for helpful remarks on the preliminary version of
        this paper. We thank the anonymous referee for careful reading and
        pointing out several imprecise points.

    \section{Monoids and affine schemes}

    Throughout, we fix a base field $\kk$. We do not impose any
    characteristic, algebraically closed, perfect or other assumptions on $\kk$.
    An \emph{algebraic monoid} is a geometrically integral affine variety $\Gbar$ together with an
    associative multiplication $\mu\colon\Gbar \times \Gbar \to \Gbar$ that
    has an identity element $1\in \Gbar(\kk)$. The \emph{unit group} of
    $\Gbar$ is the open subset consisting of all invertible elements. We
    denote it by $\Gbar^{\times}$ or simply by
    $\Group$. The unit group $\Group$ is open in $\Gbar$, so it is dense and connected.
    The monoid is \emph{reductive} if $\Group$
    is reductive, i.e., $\Group_{\kkbar}$ contains no
    nonzero normal unipotent subgroups.
    Every $\Group$-\emph{representation} is assumed to be
    rational,
    that is, a union of finite dimensional subrepresentations on which $\Group$ acts
    regularly.
    A canonical reference for reductive
    groups in all characteristics is~\cite{SGA3}, for a summary
    see~\cite{Demazure__thesis}. Great introductions to algebraic monoids are
    for example~\cite{Brion__On_algebraic_semigroups_and_monoids, Renner__Linear_algebraic_monoids}.
    We remark that the affineness assumption on $\Gbar$ is redundant for reductive $\Group$:
    it follows by descent from \cite[Lemma~2]{Rittatore__algebraic_monoids} that every
    geometrically integral variety over $\kk$ with an associative
    multiplication and with dense, reductive unit group is affine.

    \subsection{Kempf monoids}\label{ssec:KempfMonoids}

        Let $\Gbar$ be a Kempf monoid with unit group $\Group$ and
        $\Gmultkkbar\to \Group$ be the Kempf torus.
        The \emph{Kempf line} is the corresponding map
        $\mathbb{A}^1_{\kkbar}\to \Gbar$.
        Let $T$ be the image of $\Gmultkkbar$ in $\Group$. This
        is a one-dimensional algebraic group and it is geometrically
        connected, since it is connected and has a rational point $1_{T}$,
        see~\cite[Tag~04KV]{stacks_project}. Therefore, $T$ is a torus; in
        particular it is linearly reductive.

        \begin{lemma}\label{ref:degreeZeroImpliesTrivialAction:lemma}
            Let $X = \Spec(A)$ be a $\Gbar_{\kkbar}$-scheme such that
            the action of the Kempf torus on $X$ is trivial.
            Then the
            $\Gbar_{\kkbar}$-action on $X$ is trivial.
        \end{lemma}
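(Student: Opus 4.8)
The plan is to work entirely at the level of the coordinate ring. Since the statement is over $\kkbar$, write $R := H^0(\Gbar_{\kkbar}, \cO)$ and recall that the $\Gbar_{\kkbar}$-action on $X = \Spec(A)$ is encoded by a coaction $\Delta\colon A \to A \otimes R$, an algebra homomorphism satisfying coassociativity $(\Delta \otimes \id_R)\circ\Delta = (\id_A \otimes \mu^*)\circ\Delta$ and the counit axiom $(\id_A \otimes \epsilon)\circ\Delta = \id_A$, where $\mu^*\colon R \to R \otimes R$ is comultiplication and $\epsilon\colon R \to \kkbar$ is evaluation at $1$. Let $z\colon R \to \kkbar$ denote evaluation at the zero $0_{\Gbar}$. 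Proving that the action is trivial amounts to showing $\Delta(a) = a \otimes 1$ for every $a \in A$. I would split this into two steps: first that the endomorphism $P := (\id_A \otimes z)\circ\Delta$ of $A$ --- the ``action of $0_{\Gbar}$'' --- is the identity, and then that the absorbing property of $0_{\Gbar}$ forces $\Delta$ itself to be trivial.

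For the first step I would exploit the Kempf line. It is a monoid homomorphism $(\mathbb{A}^1_{\kkbar}, \cdot) \to \Gbar$ and thus induces an algebra map $\ell^*\colon R \to \kkbar[t]$ whose composite with the inclusion $\kkbar[t] \into \kkbar[t,t^{-1}]$ is the cocharacter $\lambda^*\colon R \to \kkbar[t,t^{-1}]$ of the Kempf torus, and whose composite with evaluation $t \mapsto 0$ is $z$. The hypothesis that the Kempf torus acts trivially says precisely that $(\id_A \otimes \lambda^*)\circ\Delta(a) = a \otimes 1$ for all $a$. Since $\kkbar[t] \into \kkbar[t,t^{-1}]$ is injective and tensoring over the field $\kkbar$ is exact, the map $\id_A \otimes (\kkbar[t] \into \kkbar[t,t^{-1}])$ is injective, so the $\mathbb{A}^1_{\kkbar}$-coaction is already trivial: $(\id_A \otimes \ell^*)\circ\Delta(a) = a \otimes 1$. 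Post-composing with evaluation at $t = 0$ and using $z = (\mathrm{ev}_0)\circ\ell^*$ gives $P(a) = a$, hence $P = \id_A$. Here the Kempf line does the essential work of transporting triviality from $\Gmultkkbar$ to the boundary point $0_{\Gbar}$, replacing the naive ``$\Gmultkkbar$ is dense in $\mathbb{A}^1_{\kkbar}$'' argument, which is unavailable because $X$ may be non-reduced.

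For the second step I would apply $\id_A \otimes z \otimes \id_R$ to the coassociativity identity. On the left this yields $(P \otimes \id_R)\circ\Delta = \Delta$, since $P = \id_A$. On the right it yields $(\id_A \otimes ((z \otimes \id_R)\circ\mu^*))\circ\Delta$, and the absorbing property $0_{\Gbar}\cdot g = 0_{\Gbar}$ translates into $(z \otimes \id_R)\circ\mu^*(r) = z(r)\cdot 1_R$; thus the right-hand side equals $P(a) \otimes 1_R = a \otimes 1_R$. Comparing the two expressions gives $\Delta(a) = a \otimes 1_R$, so the action is trivial. Conceptually this is just the computation $g \cdot x = 0_{\Gbar}\cdot(g\cdot x) = (0_{\Gbar}\cdot g)\cdot x = 0_{\Gbar}\cdot x = x$, combining triviality of the $0_{\Gbar}$-action, associativity, and absorption (the element $0_{\Gbar}$ being two-sided absorbing, the choice of side is immaterial). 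The only genuine subtlety is the possible non-reducedness of $X$, which is why I would phrase the whole argument through $\Delta$ and the injection $\kkbar[t]\into\kkbar[t,t^{-1}]$ rather than on points; once set up this way, every step is formal.
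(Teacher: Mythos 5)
Your proof is correct and is essentially the paper's argument in dual form: both first show, via the Kempf line and the schematic density of $\Gmultkkbar$ in $\mathbb{A}^1_{\kkbar}$ (your injectivity of $A\otimes\kkbar[t]\into A\otimes\kkbar[t,t^{-1}]$), that $0_{\Gbar}$ acts as the identity, and then conclude by the computation $g\cdot x = 0_{\Gbar}\cdot(g\cdot x) = (0_{\Gbar}\cdot g)\cdot x = 0_{\Gbar}\cdot x = x$, which the paper carries out on $S$-points and you carry out on coactions (the $S$-point version is equally valid for non-reduced $X$, so the coaction language is a stylistic rather than essential choice). The only wrinkle is that your stated coassociativity $(\Delta\otimes\id_R)\circ\Delta = (\id_A\otimes\mu^*)\circ\Delta$ is the right-action convention, but as you note, since $0_{\Gbar}$ is two-sided absorbing the argument works verbatim with the left-action twist.
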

        \begin{proof}
            Since $1_{\Group}, 0_{\Gbar}$ are in the closure of Kempf's torus,
            the assumption implies in particular that the
            actions of $1_{\Group}$ and $0_{\Gbar}$ are equal. For every
            $\kkbar$-scheme $S$
            and $S$-points
            $x\in X(S)$ and $g\in \Gbar(S)$ we have $gx = (g\cdot
            1_{\Group})x = g\cdot (1_{\Group}\cdot x) = g\cdot (0_{\Gbar}\cdot
            x) = (g\cdot 0_{\Gbar})\cdot x =
            0_{\Gbar}\cdot x = 1_{\Group}\cdot x$, whence the action is trivial.
        \end{proof}
        \begin{corollary}\label{ref:gradingPositive:cor}
            Let $X = \Spec(A)$ be a $\Gbar$-scheme. Consider the $\mathbb{N}$-grading on
            $A_{\kkbar}$ associated to the Kempf torus. Then
            $(A_{\kkbar})_{>0}$ is the ideal of $X^{\Group}_{\kkbar}$ in
            $X_{\kkbar}$.
        \end{corollary}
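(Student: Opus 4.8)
The plan is to deduce this corollary directly from Lemma~\ref{ref:degreeZeroImpliesTrivialAction:lemma} by a base change to $\kkbar$ and a standard argument identifying fixed-point ideals with the positive part of a grading. Since the claim is entirely about $A_{\kkbar}$, I would first reduce to the case $\kk = \kkbar$, so that the Kempf torus $\Gmultkkbar$ is genuinely a $\Gmult$ acting on $X$. A $\Gmult$-action on $X = \Spec(A)$ is the same datum as a $\ZZ$-grading $A = \bigoplus_{n\in\ZZ} A_n$. The corollary already asserts that this grading is an $\NN$-grading; this nonnegativity is exactly where the \emph{Kempf} hypothesis enters, since the Kempf line $\mathbb{A}^1_{\kkbar}\to\Gbar$ sends $0\mapsto 0_{\Gbar}$, forcing the limit $\lim_{t\to 0} t\cdot x$ to exist for every $x$, which translates into the vanishing of all negative graded pieces.

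Next I would identify the fixed-point subscheme $X^{\Group}_{\kkbar}$. The key point is that, since the Kempf torus is central in $\Group$ and $0_{\Gbar}, 1_{\Group}$ lie in the closure of its image, Lemma~\ref{ref:degreeZeroImpliesTrivialAction:lemma} shows that the $\Gbar_{\kkbar}$-action on a scheme is trivial precisely when the Kempf-torus action is trivial. Applied to closed subschemes, this gives $X^{\Group}_{\kkbar} = X^{\Gmult}_{\kkbar}$ as closed subschemes of $X_{\kkbar}$, reducing the problem to computing the $\Gmult$-fixed locus. For a $\ZZ$-graded ring the fixed-point subscheme of the $\Gmult$-action is cut out by the ideal generated by all homogeneous elements of nonzero degree: the coaction $A \to A\otimes \kkbar[t,t^{-1}]$ sends a homogeneous element $a$ of degree $n$ to $a\otimes t^n$, and the fixed locus is where this coaction agrees with $a\mapsto a\otimes 1$, i.e.\ where all positive-degree elements vanish. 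Combined with the $\NN$-grading, the ideal of nonzero-degree elements is exactly $(A_{\kkbar})_{>0}$.

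Concretely, I would verify the grading is concentrated in nonnegative degrees as follows. The Kempf line gives a map $\mathbb{A}^1_{\kkbar}\times X_{\kkbar}\to X_{\kkbar}$ restricting to the $\Gmult$-action, so the coaction $A_{\kkbar}\to A_{\kkbar}\otimes\kkbar[t,t^{-1}]$ factors through $A_{\kkbar}\otimes\kkbar[t]$. On a homogeneous element $a\in (A_{\kkbar})_n$ the coaction is $a\mapsto a\otimes t^n$, and this lands in $A_{\kkbar}\otimes\kkbar[t]$ only if $n\geq 0$; hence $(A_{\kkbar})_n = 0$ for $n<0$ and the grading is an $\NN$-grading. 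It then remains to observe that the ideal defining $X^{\Gmult}_{\kkbar}$ is generated by homogeneous elements of nonzero degree, which under the $\NN$-grading is precisely $(A_{\kkbar})_{>0}$, and to invoke $X^{\Group}_{\kkbar} = X^{\Gmult}_{\kkbar}$ to conclude.

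The main obstacle I anticipate is the clean identification $X^{\Group}_{\kkbar} = X^{\Gmult}_{\kkbar}$ at the scheme-theoretic (not merely set-theoretic) level. One inclusion is formal, since the Kempf torus sits inside $\Group$; the reverse inclusion is exactly the content of Lemma~\ref{ref:degreeZeroImpliesTrivialAction:lemma}, which must be applied to the universal fixed-point subscheme rather than just to points, so some care with the functor-of-points description of fixed loci is needed. Everything else is the routine translation between $\Gmult$-actions and gradings.
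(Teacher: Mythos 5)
Your strategy is in essence the paper's own: make the grading explicit, show it is an $\mathbb{N}$-grading via the Kempf line (the paper builds this into the statement, and your explicit coaction argument for it is correct), get the inclusion $X^{\Group}_{\kkbar}\subset V((A_{\kkbar})_{>0})$ essentially for free, and use Lemma~\ref{ref:degreeZeroImpliesTrivialAction:lemma} for the reverse inclusion. However, there is a genuine gap at exactly the point you flag as ``needing care'' but never close, and your diagnosis of the difficulty (functor-of-points versus points) misses what the actual issue is. Lemma~\ref{ref:degreeZeroImpliesTrivialAction:lemma} applies only to a \emph{$\Gbar_{\kkbar}$-scheme}, i.e.\ to a closed subscheme of $X_{\kkbar}$ to which the whole $\Gbar_{\kkbar}$-action restricts. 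Before you may invoke it on $Z = V((A_{\kkbar})_{>0})$ (your Kempf-torus fixed locus), you must prove that $Z$ is $\Gbar_{\kkbar}$-stable; your write-up nowhere establishes this, and without it the lemma simply does not apply --- its conclusion is about the restriction of the $\Gbar_{\kkbar}$-action to $Z$, which has to exist first.

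The paper closes this step in two moves, both of which are absent from your proposal. First, since the Kempf torus is central in $\Group_{\kkbar}$, the $\Group_{\kkbar}$-action commutes with the torus action and hence preserves the grading, so $(A_{\kkbar})_{>0}$ is a $\Group_{\kkbar}$-stable ideal. Second, $\Group_{\kkbar}$-stability upgrades to $\Gbar_{\kkbar}$-stability because $\Group$ is dense in $\Gbar$ and $X$ is a $\Gbar$-scheme: the coaction of an element of the ideal lands in $H^0(\Gbar_{\kkbar},\OO)\tensor A_{\kkbar}$, and its restriction to the dense open $\Group_{\kkbar}$ lands in $H^0(\Group_{\kkbar},\OO)\tensor (A_{\kkbar})_{>0}$, so it lies in the intersection $H^0(\Gbar_{\kkbar},\OO)\tensor (A_{\kkbar})_{>0}$. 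Only then is $Z$ a $\Gbar_{\kkbar}$-scheme with trivial Kempf-torus action, and Lemma~\ref{ref:degreeZeroImpliesTrivialAction:lemma} gives that the $\Gbar_{\kkbar}$-action (in particular the $\Group_{\kkbar}$-action) on $Z$ is trivial, whence $Z\subset X^{\Group}_{\kkbar}$. With these two observations inserted, your argument becomes complete and coincides with the paper's proof.
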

        \begin{proof}
            Since $\Gbar$ acts trivially on $X_{\kkbar}^{\Group}$, also the Kempf line
            acts trivially, so $I(X^{\Group}_{\kkbar}) \supset (A_{\kkbar})_{>0}$.
            Conversely, the ideal $(A_{\kkbar})_{>0}$ is
            $\Group_{\kkbar}$-stable since the Kempf torus is central, so it
            is $\Gbar_{\kkbar}$-stable and so $Z = V((A_{\kkbar})_{>0})
            \subset X_{\kkbar}$ is a $\Gbar_{\kkbar}$-scheme with a trivial
            action of the Kempf torus. By
            Lemma~\ref{ref:degreeZeroImpliesTrivialAction:lemma} also the
            $\Gbar$-action on $Z$ is trivial, so $Z \subset X^{\Group}$ and so
            equality holds.
        \end{proof}

    \subsection{Reductive monoids are Kempf}

    To prove the result in the title, we need a notion slightly weaker than an algebraic
    monoid. In this subsection a \emph{monoid} $M$ is an affine scheme of finite type with
    an identity and an associative multiplication (this notion is not used
    elsewhere in the article).
    \begin{lemma}\label{ref:SemisimpleMonoidsHaveClosedUnitGroup:lem}
        Let $\Group$ be a semisimple group and $\Gbar$ be a monoid with
        $\Group$ contained in the unit
        group of $\Gbar$. Then $\Group$ is closed in $\Gbar$.
    \end{lemma}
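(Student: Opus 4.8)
The plan is to reduce to the case where $\Group$ is dense in $\Gbar$ and then exploit that a semisimple group carries no nontrivial character. Since whether an immersion is a closed immersion may be checked after the faithfully flat base change $\kkbar/\kk$, I may assume $\kk=\kkbar$. I then replace $\Gbar$ by $M:=\ol{\Group}$, the closure of $\Group$ in $\Gbar$ endowed with its reduced structure. As $\Group$ is smooth and connected it is integral, hence so are $M$ and, over an algebraically closed field, $M\times M$. Continuity of the multiplication together with $\mu(\Group\times\Group)=\Group$ forces $\mu(M\times M)\subseteq M$ set-theoretically, and since $M\times M$ is reduced this factors through the closed subscheme $M$. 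Thus $(M,\mu|_M)$ is an integral affine monoid containing $\Group$ as a dense subgroup, and it suffices to prove that \emph{every element of $M$ is invertible}: then $M$ is a group, the injective homomorphism $\Group\hookrightarrow M$ has closed image, and the dense subgroup $\Group$ must equal $M$, which is closed in $\Gbar$.

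Next I would embed $M$ into a matrix monoid. The right regular representation of $M$ on $\kk[M]$ is locally finite, so a finite generating set of $\kk[M]$ spans a finite-dimensional subcomodule $V$, and the associated coaction yields a closed immersion of monoids $\rho\colon M\hookrightarrow\End(V)$. Consider the regular function $\delta:=\det\circ\,\rho\in\kk[M]$. Its restriction to $\Group$ is the character $g\mapsto\det\rho(g)\colon\Group\to\Gmult$, which is trivial because a semisimple group is perfect and $\Gmult$ is abelian. Since $\Group$ is dense in the integral scheme $M$ and $\delta\equiv1$ on $\Group$, we get $\delta\equiv1$ on all of $M$; in particular $\rho(x)\in\GL(V)$ for every $x\in M$.

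Finally, fix $x\in M$ and set $g=\rho(x)\in\GL(V)$. The Zariski closure of the cyclic semigroup $\{g^{j}\}_{j\ge1}$ in $\GL(V)$ coincides with the algebraic subgroup generated by $g$: by the Jordan decomposition the closures of $\{g^{j}\}_{j\ge1}$ and of $\{g^{j}\}_{j\in\ZZ}$ agree, so this closure is a commutative algebraic subgroup and contains $g^{-1}$. On the other hand the same closure lies in the closed set $\rho(M)$, since each $g^{j}=\rho(x^{j})$ does. Hence $g^{-1}=\rho(z)$ for some $z\in M$, and injectivity of $\rho$ together with $\rho(xz)=\rho(zx)=\id$ gives $xz=zx=1$. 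Thus $x$ is invertible, as needed.

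I expect the last step to be the main obstacle: knowing that $\rho(x)$ lies in $\GL(V)$ does not by itself place $\rho(x)^{-1}$ in the closed submonoid $\rho(M)$, and the crucial point is precisely that the inverse arises as a limit of positive powers of $g$. The reduction in the first paragraph also requires the mild care, noted there, that the multiplication descends to the reduced closure, which is why I pass to $\kk=\kkbar$ at the outset; everything else is standard structure theory of affine monoids and linear algebraic groups.
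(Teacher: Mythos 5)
Your proof is correct in substance, but it takes a noticeably longer route than the paper's, and the step you yourself flag as the main obstacle is in fact unnecessary. Both arguments share the same skeleton: embed into a matrix monoid via a faithful finite-dimensional representation, use that a semisimple group admits no nontrivial characters to force $\det \equiv 1$ on the image of $\Group$, and use that images of homomorphisms of algebraic groups are closed. The paper stops essentially there: with $i\colon \Gbar \hookrightarrow \End(V)$ faithful, $i(\Group)\subset \SL(V)$ is closed in $\SL(V)$ as the image of a group homomorphism, and $\SL(V)=(\det=1)$ is closed in $\End(V)$; hence $i(\Group)$ is closed in $\End(V)$ and therefore in $\Gbar$ --- no base change to $\kkbar$, no monoid structure on the closure, no invertibility analysis. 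In your setup the same shortcut is available the moment you know $\delta\equiv 1$ on $M$, i.e.\ $\rho(M)\subset \SL(V)\subset\GL(V)$: the subgroup $\rho(\Group)$ is closed in $\GL(V)$ (the closed-image fact you already invoke in your first paragraph), while its closure in $\GL(V)$ equals $\overline{\rho(\Group)}\cap\GL(V)=\rho(M)\cap\GL(V)=\rho(M)$; hence $\rho(\Group)=\rho(M)$, so $\Group=M$ is closed in $\Gbar$. Thus your entire third paragraph --- inverting each $x\in M$ through the Zariski closure of $\{g^{j}\}_{j\ge 1}$ --- can be deleted, and with it the one genuinely thin point of your write-up: the assertion that the closures of $\{g^{j}\}_{j\ge 1}$ and $\{g^{j}\}_{j\in\ZZ}$ agree is true, but ``by the Jordan decomposition'' is not a proof (the semisimple part needs a Vandermonde-type vanishing argument, the unipotent part a separate one in each characteristic). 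The clean classical proof of that fact is Noetherian descent: for $C=\overline{\{g^{j}\}_{j\ge1}}$ the chain $C\supseteq gC\supseteq g^{2}C\supseteq\cdots$ of closed subsets stabilizes, so $gC=C$, which gives $1\in C$ and then $g^{-1}\in C$; this is the standard statement that a closed subsemigroup of an algebraic group is a subgroup. In short: what semisimplicity buys, in both proofs, is that the image of $\Group$ is trapped inside $\SL(V)$, where being closed as a subgroup already means being closed in $\End(V)$; once you see this, the reduction to the dense case and the invertibility of all of $M$ come for free rather than needing a separate argument.
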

    \begin{proof}
        Since $\Gbar$ is affine of finite type, it has a faithful finite-dimensional
        representation. We fix such a representation $V$ and the associated
        embedding $i\colon\Gbar \subset \End_{\kk}(V)$. The group $i(\Group)$
        is an image of semisimple group, hence it has no non-zero characters
        so in particular $\det_{|i(\Group)}$ is trivial and so we have $i(\Group)
        \subset \SL(V)$. The map $i\colon\Group \to \SL(V)$ is a group homomorphism,
        so its image is closed. Also $\SL(V) = (\det = 1)$ is closed in
        $\End(V)$. Summing up, the group $i(\Group)$ is closed in $\End(V)$ so also in
        $\Gbar$.
    \end{proof}
    \begin{corollary}\label{ref:trivialMonoidsForSemisimpleGroups:cor}
        In the setting of
        Lemma~\ref{ref:SemisimpleMonoidsHaveClosedUnitGroup:lem} assume moreover
        that $\Gbar$ has a zero and that $\Group \subset \Gbar$ is dense. Then
        $\Gbar$ is the zero monoid.
    \end{corollary}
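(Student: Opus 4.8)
The plan is to combine the conclusion of Lemma~\ref{ref:SemisimpleMonoidsHaveClosedUnitGroup:lem} with the density hypothesis to show that $\Group$ fills up all of $\Gbar$, and then to exploit the incompatibility of invertibility with the absorbing property of the zero.

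First I would observe that $\Group$ is simultaneously \emph{closed} in $\Gbar$ (by Lemma~\ref{ref:SemisimpleMonoidsHaveClosedUnitGroup:lem}) and \emph{dense} (by hypothesis). On underlying topological spaces this forces $|\Group| = |\Gbar|$: density gives $\overline{|\Group|} = |\Gbar|$, while closedness gives $\overline{|\Group|} = |\Group|$. In particular the zero $0_{\Gbar}$ is a point of $\Group$. Since $\Group$ is the open unit subscheme of $\Gbar$, the $\kk$-point $0_{\Gbar}\colon \Spec \kk \to \Gbar$ factors through this open subscheme, so $0_{\Gbar}\in \Group(\kk)$ is invertible. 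Letting $h\in \Group(\kk)$ be its inverse, we have $0_{\Gbar}\cdot h = 1$ by definition of the inverse, while $0_{\Gbar}\cdot h = 0_{\Gbar}$ because $0_{\Gbar}$ is absorbing; hence the identity and zero sections $\Spec \kk \to \Gbar$ coincide, i.e. $1 = 0_{\Gbar}$.

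Finally I would upgrade this equality of $\kk$-points to an isomorphism $\Gbar \cong \Spec \kk$ by Yoneda. For every $\kk$-algebra $R$ and every $g\in \Gbar(R)$ we get $g = g\cdot 1_R = g\cdot 0_R = 0_R$, where the middle equality is the base change of $1 = 0_{\Gbar}$ and the last again uses that the zero is absorbing. Thus $\Gbar(R)$ is a singleton for every $R$, so $\Gbar$ is the zero monoid. The only point to state with care — and the step I expect to be the main obstacle — is the passage from the topological equality $|\Group| = |\Gbar|$ to a scheme-theoretic conclusion: rather than claiming $\Group = \Gbar$ as schemes (which would need a reducedness input for the possibly non-reduced $\Gbar$ allowed in this subsection), I only extract the single fact that $0_{\Gbar}$ lies in the open unit group, which is all the invertibility argument requires. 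The substantive work has already been carried out in Lemma~\ref{ref:SemisimpleMonoidsHaveClosedUnitGroup:lem}.
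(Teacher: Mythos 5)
Your overall strategy is the same as the paper's: use the Lemma's closedness together with density to force the zero into $\Group$, observe that a monoid whose zero is invertible collapses, and then upgrade to a scheme-theoretic statement. The back end of your argument is correct and in fact more careful than the paper's own proof (which simply writes ``$\Gbar = \Group$'' and ``$\Group = \{0_{\Gbar}\}$''): the deduction $0_{\Gbar}\cdot h = 1$ and $0_{\Gbar}\cdot h = 0_{\Gbar}$ giving $1 = 0_{\Gbar}$, and the $R$-point/Yoneda argument giving $\Gbar \cong \Spec \kk$, are both fine. However, the step that places $0_{\Gbar}$ in $\Group(\kk)$ rests on a premise that is not available. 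You write ``Since $\Group$ is the open unit subscheme of $\Gbar$,'' but in the setting of Lemma~\ref{ref:SemisimpleMonoidsHaveClosedUnitGroup:lem} the group $\Group$ is only assumed to be a semisimple group \emph{contained in} the unit group of $\Gbar$, not equal to it (think of $\SL(V)$ inside the matrix monoid $\End(V)$: such a $\Group$ is closed, not open). Moreover, the paper asserts openness of unit groups only for algebraic monoids in the sense of geometrically integral varieties, whereas in this subsection $\Gbar$ is merely an affine scheme of finite type, possibly non-reduced and reducible; and in the corollary's actual application, the subgroup $\pi(\Group)\subset Q$ is not known to be the full unit group of $Q$ either. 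So the factoring step, as you justify it, fails.

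The repair is immediate and uses exactly the ingredient the Lemma hands you — closedness — combined with the reducedness of $\Spec\kk$ that you yourself flagged as the sensitive point. Since $\Group$ is a \emph{closed} subscheme of $\Gbar$ with $|\Group| = |\Gbar|$ (closed plus dense), its ideal sheaf is contained in the nilradical, i.e. $\Gbar_{\red} \subset \Group$ as closed subschemes; in fact $\Group = \Gbar_{\red}$, because $\Group$ is smooth, hence reduced. The $\kk$-point $0_{\Gbar}\colon \Spec\kk \to \Gbar$ has reduced source, so it factors through $\Gbar_{\red} \subset \Group$, which is all your invertibility argument needs. With this one substitution your proof is complete and coincides in substance with the paper's: closed and dense imply that the zero lies in the group of units, and a monoid with invertible zero is the zero monoid.
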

    \begin{proof}
        Lemma~\ref{ref:SemisimpleMonoidsHaveClosedUnitGroup:lem} shows that
        $\Group$ is closed in $\Gbar$. As it is dense, we have $\Gbar =
        \Group$. But then $0_{\Gbar}\in \Group$ is an invertible element and
        this happens only if $\Group = \{0_{\Gbar}\}$.
    \end{proof}

    \begin{proposition}\label{ref:centertrick:prop}
        Let $\Gbar$ be a reductive monoid with zero. Let $\Group$ be its unit
        group and let $Z$ be the connected component of the identity in the center
        $Z(\Group)$ with its reduced structure.
        Then for every point $g\in \Gbar(\kk)$, the point $0_{\Gbar}$ lies in
        the closure of $Z\cdot g$.
    \end{proposition}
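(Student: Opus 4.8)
The plan is to reduce the statement, which concerns the full reductive monoid $\Gbar$, to the structure of its central torus $Z$ together with the closure relation inside $\Gbar$. First I would observe that the desired conclusion is really a statement about the monoid generated by $Z$ and its closure: since $Z \subset Z(\Group)$ is a (reduced, connected) torus, the closure $\Zbar$ of $Z$ inside $\Gbar$ is an affine commutative monoid, and its unit group is $Z$. So I would first argue that it suffices to prove that $0_{\Gbar}$ lies in the closure of $Z$ itself, i.e.\ $0_{\Gbar} \in \overline{Z}$; then for a general $g$ the right-multiplication-by-$g$ map $\Gbar \to \Gbar$ is continuous (a morphism of schemes), so it carries $\overline{Z} \cdot g \supseteq \overline{Z \cdot g}$, and if $0_{\Gbar}\in\overline{Z}$ then $0_{\Gbar}\cdot g = 0_{\Gbar}$ lies in $\overline{Z\cdot g}$, giving the claim uniformly.

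Next I would pin down why $0_{\Gbar}$ is in the closure of $Z$. The key input should be that $\Gbar$ is reductive with a zero, and the center contains ``enough'' one-parameter subgroups. I would pass to $\kkbar$ (this is harmless because closure and membership of $0_{\Gbar}$ can be checked after base change, and $Z$ is geometrically connected). Over $\kkbar$, the closure $\overline{Z}$ is a commutative affine monoid whose unit group is the torus $Z_{\kkbar}$; by the theory of affine torus embeddings (toric varieties for the dense torus $Z_{\kkbar}$), the monoid $\overline{Z}$ corresponds to a finitely generated submonoid of the character lattice $X^*(Z_{\kkbar})$, equivalently to a rational polyhedral cone $\sigma$ in the cocharacter space. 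The point $0_{\Gbar}$, being the zero of $\Gbar$, is annihilated by \emph{every} nonconstant character appearing in the coordinate ring, so its presence in $\overline{Z}$ is equivalent to the cone $\sigma$ being strongly convex, i.e.\ of full dimension with a vertex; and the one-parameter subgroups of $Z_{\kkbar}$ lying in the interior of $\sigma$ are exactly those whose limit as $t\to 0$ is $0_{\Gbar}$.

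The main obstacle, and the heart of the proposition, is therefore to produce such an interior cocharacter — equivalently to show that the cone $\sigma$ attached to $\overline{Z}\subset\Gbar$ is strongly convex and that $0_{\Gbar}$ sits at its apex. I expect this to follow from reductivity: the existence of the zero $0_{\Gbar}$ in the whole monoid forces, via the weight decomposition of a faithful representation $V$ of $\Gbar$ (as in Lemma~\ref{ref:SemisimpleMonoidsHaveClosedUnitGroup:lem}), that all $Z_{\kkbar}$-weights on $V$ lie strictly on one side, so that some central cocharacter $\lambda\colon \Gmultkkbar \to Z_{\kkbar}$ acts on $V$ with strictly positive weights and hence $\lim_{t\to 0}\lambda(t) = 0_{\Gbar}$. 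Concretely, I would take the faithful embedding $\Gbar \into \End(V)$, decompose $V$ into $Z_{\kkbar}$-weight spaces, and use the fact that $0_{\Gbar}$ maps to the zero endomorphism to conclude the weights cannot span a subspace containing $0$ in the relative interior of their convex hull — this is where I would expect the reductivity (more precisely, that $\Group/Z$ is semisimple, so Corollary~\ref{ref:trivialMonoidsForSemisimpleGroups:cor} rules out the semisimple directions contributing a nontrivial monoid with zero) to be used to isolate the central torus as the source of the degeneration. Once the existence of such a $\lambda$ is established, $\lim_{t\to 0}\lambda(t)\cdot g = 0_{\Gbar}$ lies in $\overline{Z\cdot g}$ and the proof concludes.
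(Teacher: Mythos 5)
Your overall route is genuinely different from the paper's and is workable in principle: reduce to $g = 1_{\Group}$ by right translation (note the containment goes the other way from what you wrote: continuity gives $\overline{Z}\cdot g \subseteq \overline{Z\cdot g}$, which is what you actually need), pass to $\kkbar$, and produce a cocharacter $\lambda\colon \Gmultkkbar\to Z_{\kkbar}$ all of whose weights on a faithful representation $V$ are strictly positive, so that $\lim_{t\to 0}\lambda(t) = 0_{\Gbar}$. However, the proposal has a genuine gap exactly at what you yourself call ``the heart of the proposition'': you never prove that the $Z_{\kkbar}$-weights on $V$ lie strictly in an open half-space; you only say you ``expect'' this to follow from reductivity and Corollary~\ref{ref:trivialMonoidsForSemisimpleGroups:cor}. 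Moreover, the condition you state is too weak: what is needed is that $0$ does not lie in the convex hull of the weights at all, not merely that it is not in the relative interior (if $0$ is itself a weight, or if $\chi$ and $-\chi$ both occur, then no cocharacter is strictly positive on all weights). The missing step can be supplied along your lines, for instance: decompose $V = \bigoplus_{\chi} V_{\chi}$ into $Z_{\kkbar}$-weight spaces (each is $\Gbar_{\kkbar}$-stable, since it is $\Group_{\kkbar}$-stable by centrality of $Z$ and the locus in $\End(V)$ preserving it is closed and contains the dense subgroup $\Group_{\kkbar}$); if $\sum_i n_i\chi_i = 0$ with integers $n_i\geq 0$ not all zero, then $W := \bigotimes_i V_{\chi_i}^{\otimes n_i}$ is a nonzero $\Gbar_{\kkbar}$-representation on which $Z_{\kkbar}$ acts trivially and $0_{\Gbar}$ acts as the zero endomorphism; the closure of the image of $\Group_{\kkbar}$ in $\End(W)$ is then a monoid with zero whose dense unit group is an image of the semisimple group $\Group_{\kkbar}/Z_{\kkbar}$, so Corollary~\ref{ref:trivialMonoidsForSemisimpleGroups:cor} forces it to be the zero monoid, i.e.\ $\id_W = 0$ and $W = 0$, a contradiction. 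Without an argument of this kind the proof does not get off the ground, because everything before and after this point is routine.

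There is also a secondary error: it is not true that a faithful representation of $\Gbar$ sends $0_{\Gbar}$ to the zero endomorphism. For example, $t\mapsto \mathrm{diag}(t,1)$ is a faithful representation of the monoid $(\mathbb{A}^1,\cdot)$ sending $0$ to a nonzero idempotent. This is repairable: if $e$ denotes the image of $0_{\Gbar}$ in $\End(V)$, then $V = \ker(e)\oplus \Image(e)$ is a $\Gbar$-stable decomposition, $\Gbar$ acts trivially on $\Image(e)$, and $\Gbar\into\End(\ker(e))$ is still a closed immersion, now sending $0_{\Gbar}$ to $0$; but this needs to be said. For comparison, the paper's proof avoids weight considerations entirely: it forms the quotient $Q = \Gbar\goodquotient Z$, shows $Q$ is a monoid with zero whose dense unit group is semisimple, concludes from the same Corollary~\ref{ref:trivialMonoidsForSemisimpleGroups:cor} that $Q$ is a point, and then uses that every $Z$-orbit closure contains the unique closed orbit $\{0_{\Gbar}\}$ --- which yields the statement for all $g\in\Gbar(\kk)$ at once, with no reduction to $g=1$ and no base change to $\kkbar$.
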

    \begin{proof}
        Since $\Group$ is reductive, $Z$ is an algebraic
        torus~\cite[XII.4.11]{SGA3}. Consider the quotient variety
        \[
            Q := \Gbar\goodquotient Z = \Spec\left(H^0(\Gbar,
            \OO_{\Gbar})^{Z}\right)
        \]
        and the quotient map $\pi\colon \Gbar\to Q$.
        Since $Z$ is a torus, this is a categorical quotient, so the map $\pi\circ \mu\colon\Gbar \times
        \Gbar\to \Gbar\to Q$ descends to a map $\bar{\mu}\colon Q\times Q\to
        Q$ which makes $Q$ a monoid with zero.
        The map $\pi$ and the inclusion $\Group \to \Gbar$ are both dominant,
        hence so is their composition, which implies that the subgroup $\pi(\Group) \subset
        Q$ is dense. The group $\Group/Z$ is
        semisimple~\cite[XXII,4.3.5]{SGA3}, so also its image $\pi(\Group)$ is
        semisimple. Now the monoid $Q$ satisfies assumptions of
        Corollary~\ref{ref:trivialMonoidsForSemisimpleGroups:cor}, so $Q$ is a
        point.

        The points of $Q =
        \Gbar\goodquotient Z$ correspond to closed $Z$-orbits in $\Gbar$, so there
        is only one closed orbit and it is equal to
        $\{0_{\Gbar}\}$. By general theory, the closure of every $Z$-orbit
        in $\Gbar$ contains this closed orbit. This concludes the proof.
    \end{proof}

    \begin{corollary}[Reductive monoids are Kempf]
        Let $\Gbar$ be a reductive monoid with zero.
        Then $\Gbar$ is a Kempf monoid.
    \end{corollary}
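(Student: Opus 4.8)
The plan is to reduce the statement to a question about the central torus of $\Group$ and then settle that question by elementary toric geometry. First I would apply Proposition~\ref{ref:centertrick:prop} with $g = 1_{\Group}$. Since $Z\cdot 1_{\Group} = Z$, this yields $0_{\Gbar}\in \overline{Z}$, the closure being taken inside $\Gbar$. As $Z$ is a subgroup, its closure is a commutative submonoid of $\Gbar$ containing $0_{\Gbar}$ as its (absorbing) zero. Passing to $\kkbar$, I set $\Tbar := \overline{Z_{\kkbar}}\subseteq \Gbar_{\kkbar}$, an affine toric monoid for the split torus $Z_{\kkbar}\cong \Gmult^n$, with character lattice $M = X^*(Z_{\kkbar})$ and cocharacter lattice $N = X_*(Z_{\kkbar})$.

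Next I would describe $\Tbar$ combinatorially: its coordinate ring is the semigroup subalgebra $\kkbar[S]\subseteq \kkbar[M] = \kkbar[Z_{\kkbar}]$ spanned by those characters $m\in S\subseteq M$ that extend to regular functions on $\Tbar$, where $S$ is a finitely generated submonoid (as $\Tbar$ is of finite type). The essential consequence of the existence of the zero is that $S$ is \emph{sharp}, i.e.\ $S\cap(-S) = \{0\}$. Indeed, for $m\neq 0$ the relations $\chi^m(0_{\Gbar}) = \chi^m(0_{\Gbar})^2$ and $\chi^m(z)\,\chi^m(0_{\Gbar}) = \chi^m(0_{\Gbar})$ force $\chi^m(0_{\Gbar})\in\{0,1\}$, with the value $1$ forcing $\chi^m\equiv 1$ and hence $m = 0$; so $\chi^m(0_{\Gbar}) = 0$ for every nonzero $m\in S$. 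If both $m$ and $-m$ lay in $S$ with $m\neq 0$, we would get $1 = \chi^m(0_{\Gbar})\,\chi^{-m}(0_{\Gbar}) = 0$, a contradiction.

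Finally, sharpness of $S$ means that the rational polyhedral cone $\sigma := \mathrm{Cone}(S)\subseteq M_{\RR}$ is strongly convex, so the dual cone $\sigma^\vee\subseteq N_{\RR}$ is full-dimensional. I would choose an integral cocharacter $\lambda\in N$ in the topological interior of $\sigma^\vee$, after scaling if necessary; then $\langle m,\lambda\rangle > 0$ for every nonzero $m\in S$, while $\langle 0,\lambda\rangle = 0$. Because $\lambda\in\sigma^\vee$, the induced algebra map $\kkbar[S]\to \kkbar[t,t^{-1}]$ sending $\chi^m\mapsto t^{\langle m,\lambda\rangle}$ lands in $\kkbar[t]$, so the one-parameter subgroup $\Gmultkkbar\xrightarrow{\lambda} Z_{\kkbar}\subseteq \Gbar_{\kkbar}$ extends to a morphism $\mathbb{A}^1_{\kkbar}\to \Gbar_{\kkbar}$. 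Evaluation at $t = 0$ sends $\chi^m$ to $0$ for $m\neq 0$ and $\chi^0$ to $1$, which is exactly the functional cutting out $0_{\Gbar}$; hence the extension sends $0$ to $0_{\Gbar}$. As $\lambda$ is valued in the central torus $Z_{\kkbar}\subseteq Z(\Group)_{\kkbar}$, it is central, so $\lambda$ and its $\mathbb{A}^1$-extension are precisely the Kempf torus and Kempf line demanded by the definition.

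The only genuinely substantive point is the second step, namely turning the existence of the zero $0_{\Gbar}$ into sharpness of the character monoid $S$; once this is established, producing the degenerating cocharacter is standard. I would be careful to record that $S$ is finitely generated, so that $\sigma$ is rational polyhedral and the interior of $\sigma^\vee$ genuinely contains lattice points, and that scheme-theoretic closure commutes with the base change to $\kkbar$, so that $\overline{Z_{\kkbar}} = (\overline{Z})_{\kkbar}$ still contains $0_{\Gbar}$.
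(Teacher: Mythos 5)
Your proof is correct and follows essentially the same route as the paper: apply Proposition~\ref{ref:centertrick:prop} with $g = 1_{\Group}$ to place $0_{\Gbar}$ in the closure $\overline{Z}$ of the connected center, then use toric geometry on $\overline{Z}_{\kkbar}$ to produce a central one-parameter subgroup degenerating to $0_{\Gbar}$. The only difference is one of detail: the paper passes to the normalization of $\overline{Z}_{\kkbar}$ and quotes that its cone is pointed, whereas you work directly with the (possibly non-normal) semigroup algebra $\kkbar[S]$ and verify sharpness of $S$ by hand from the existence of the zero.
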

    \begin{proof}
        \newcommand{\Zbar}{\overline{Z}}%
        Let $Z := Z(\Group)$ be the connected component of identity in the center of the unit group $\Group$ of
        $\Gbar$. Since $\Group$ is reductive, $Z$ is an algebraic
        torus. Let $\Zbar$ be the closure of $Z$ in $\Gbar$.
        By Proposition~\ref{ref:centertrick:prop} it contains the point
        $0_{\Gbar}$, which is a fixed point of the torus $Z$. Then
        the cone corresponding, in toric varieties sense, to the normalization of $\Zbar_{\kkbar}$ is pointed, so
        any general one-parameter subgroup $\Gmultkkbar\to Z_{\kkbar}$ extends to
        $\mathbb{A}^1_{\kkbar}\to \Zbar_{\kkbar}$ that sends $0$ to $0_{\Gbar}$.
    \end{proof}

    \begin{remark}
        Upon completion of this paper, we learned that
        over an algebraically closed field the existence of Kempf torus is proven in
        \cite[Proposition~4]{Rittatore__algebraic_monoids}. We thank Michel
        Brion for the reference.
    \end{remark}
    \subsection{Representability for affine schemes: general setup}

        In this section we make no reductivity assumptions on $\Group$.
        Fix an algebraic monoid $\Gbar$ with unit group $\Group$.
        Let $V$ be a $\Group$-representation. We say that $V$ is a
        \emph{$\Gbar$-representation} if the $\Group$-action extends to a map $\Gbar
        \times V \to V$.
        If $V$ is finite dimensional, then by the coaction map it is a $\Gbar$-representation
        if and only if there exists an embedding of $V$ into $H^0(\Gbar,
        \OO_{\Gbar})^{\oplus \dim V}$. A $\Group$-representation that is a
        subrepresentation or quotient of $\Gbar$-representation is a
        $\Gbar$-representation itself. Also the tensor product of
        $\Gbar$-representations is a $\Gbar$-representation, but the dual of a
        $\Gbar$-representation is not necessarily a $\Gbar$-representation.

        \begin{lemma}\label{ref:GbarReprs:lem}
            Let $V$ be a $\Group$-representation. Then there exists a
            quotient $V\onto W$ of $\Group$-representations such that
            \begin{enumerate}
                \item $W$ is a $\Gbar$-representation,
                \item for every other $\Group$-equivariant map $V\to W'$ with
                    $W'$ a $\Gbar$-representation there exists a
                    unique factorization $V\to W\to W'$ where $W\to W'$ is a
                    $\Gbar$-equivariant map.
            \end{enumerate}
            In other words, the quotient $V\to W$ represents the functor
            $\RepGbar\to \Set$ given by $\Hom_{\Group}(V, -)$.
        \end{lemma}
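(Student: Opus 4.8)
The plan is to construct $W$ as the \emph{largest $\Gbar$-representation quotient} of $V$, exhibiting it as the cokernel of all maps into $\Gbar$-representations that fail to factor through $\Gbar$-equivariant maps. Concretely, I would first reduce to the finite-dimensional case: since every $\Group$-representation is a filtered colimit of its finite-dimensional subrepresentations, and the functor $\Hom_\Group(V,-)$ turns colimits in $V$ into limits, it suffices to establish the existence of the universal quotient for finite-dimensional $V$ and then pass to the colimit, checking that the universal property is preserved. For finite-dimensional $V$, the key observation stated in the excerpt is that a finite-dimensional $\Group$-representation $U$ is a $\Gbar$-representation if and only if it embeds into a finite direct sum of copies of $A := H^0(\Gbar,\OO_{\Gbar})$, where $A$ itself is the universal $\Gbar$-representation via the regular representation.

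The central step is to identify $W$ explicitly. I would consider the coaction candidate: form the $\Group$-equivariant map $V \to A \tensor V$ (coming from the comultiplication on $A$ restricted to $\Gbar$), or rather dualize the situation. The cleanest approach is to define $W$ as the image of $V$ under a canonical map into a $\Gbar$-representation: take the counit-type construction $V \to \Hom_{\Group}(A^\vee\text{-data}, \ldots)$ — but since duals of $\Gbar$-representations need not be $\Gbar$-representations, I must avoid dualizing naively. Instead I would argue abstractly that the full subcategory $\RepGbar \subset \RepG$ is closed under quotients, subobjects, and arbitrary direct sums (all noted in the excerpt). A reflective-subcategory argument then gives the left adjoint to the inclusion on any object $V$: the reflection is the quotient of $V$ by the intersection of kernels of all $\Group$-maps $V \to W'$ into $\Gbar$-representations. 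One must check this intersection is itself the kernel of a single $\Group$-map to a $\Gbar$-representation (so that the quotient is genuinely a $\Gbar$-representation and not merely a pro-object), which follows because $\RepGbar$ is closed under products of the small cogenerating family $\{A^{\oplus n}\}$ up to the relevant cardinality bound.

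Carrying this out, I would set $K := \bigcap_{\varphi\colon V\to W'} \Ker(\varphi)$, the intersection over all $\Group$-maps from $V$ to $\Gbar$-representations $W'$, and define $W := V/K$. Property (2) — the universal factorization — is then immediate from the definition of $K$: any $\varphi\colon V\to W'$ kills $K$, hence factors through $V\to W$, and the induced map $W\to W'$ is $\Group$-equivariant; that it is automatically $\Gbar$-equivariant requires showing $W$ carries a $\Gbar$-structure compatible with $V\to W$ and that $\Group$-maps between $\Gbar$-representations are automatically $\Gbar$-maps, the latter holding by density of $\Group$ in $\Gbar$. Property (1) — that $W$ is genuinely a $\Gbar$-representation — is the main obstacle: I must produce a single $\Gbar$-representation $W'_0$ together with an injection $W\into W'_0$, which I would obtain by embedding $V/K$ into the product of the images $V/\Ker(\varphi) \into W'$ and using that a suitable finite (in the finite-dimensional case) or set-indexed sub-sum already realizes the intersection $K$, so that $W$ embeds in a $\Gbar$-representation and is therefore one itself by closure under subobjects.

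The step I expect to fight hardest with is verifying that $K$ is cut out by maps into a \emph{single} $\Gbar$-representation rather than only as an inverse limit — equivalently, that the reflection exists as an honest quotient in $\RepG$. In the finite-dimensional case this is clean because $\Hom_\Group(V, A^{\oplus n})$ is finite-dimensional and one can choose finitely many maps whose kernels already intersect to $K$; the delicate point is the compatibility of the $\Gbar$-module structures across this finite collection, ensuring the diagonal map $V \to \bigoplus W'_i$ lands in a $\Gbar$-representation and has kernel exactly $K$. Once finite-dimensional $V$ is settled, I would assemble the general case by writing $V = \colim_\alpha V_\alpha$ over finite-dimensional subrepresentations, taking $W := \colim_\alpha W_\alpha$ of the corresponding universal quotients, and confirming that a filtered colimit of $\Gbar$-representations is a $\Gbar$-representation and that the universal property passes to the colimit via the adjunction $\Hom_\Group(\colim V_\alpha, -) = \lim \Hom_\Group(V_\alpha, -)$.
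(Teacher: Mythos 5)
Your proposal is correct, and it reaches the lemma by a genuinely different route than the paper. For finite-dimensional $V$ the paper is explicit: it views $V=\Spec\Sym(V^{\vee})$, extracts a subspace $U\subset V^{\vee}$ via the iterated pullbacks $U_0\supset U_1\supset\cdots$ against $H^0(\Gbar,\OO_{\Gbar})\tensor(-)\into H^0(\Group,\OO_{\Group})\tensor(-)$, sets $U=\bigcap_n U_n$ so that the coaction on $U$ factors through $H^0(\Gbar,\OO_{\Gbar})\tensor U$, and defines $W=\Spec\Sym(U)$; the universal property is then verified afterwards by a diagram chase on coactions. You instead define $W:=V/K$ abstractly, with $K=\bigcap_{\varphi}\Ker(\varphi)$ over all $\Group$-maps to $\Gbar$-representations, so that the universal factorization is automatic and the entire burden shifts to showing $V/K\in\RepGbar$; your reduction to finitely many kernels, the diagonal embedding $V/K\into\bigoplus_{i=1}^{m}W_i'$, and closure of $\RepGbar$ under subobjects settle this, and both arguments then handle general $V$ by the same filtered colimit over finite-dimensional subrepresentations (your $\colim_{\alpha}W_{\alpha}$ is literally the paper's $V/\bigcup_{V'}K(V')$), and both need the same density observation that $\Group$-equivariant maps between $\Gbar$-representations are automatically $\Gbar$-equivariant. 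The trade-off: the paper's construction is choice-free and identifies $\ker(V\onto W)$ concretely in terms of the coaction, while yours is shorter and purely formal, at the cost of quoting closure properties of $\RepGbar$. Three small repairs to your write-up: the reason finitely many kernels realize $K$ is the descending chain condition on subspaces of the finite-dimensional $V$, not finite-dimensionality of $\Hom_{\Group}(V,A^{\oplus n})$; the ``compatibility of $\Gbar$-structures across the finite collection'' that you expected to fight with is a non-issue, because $\bigoplus_{i}W_i'$ carries the canonical componentwise $\Gbar$-structure (closure under finite direct sums is immediate from the coaction, though the paper records only subobjects, quotients and tensor products) and the image of the diagonal map is then a $\Group$-subrepresentation of a $\Gbar$-representation, hence itself a $\Gbar$-representation; and the appeal to ``products of the small cogenerating family up to a cardinality bound'' should be dropped entirely, since infinite products of rational representations are not available in $\RepG$ and your argument never needs them.
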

        \begin{proof}
            Consider first the case when $V$ is finite-dimensional and view it
            as $V = \Spec \Sym(V^{\vee})$. The coaction map becomes
            $\sigma_V\colon V^{\vee} \to H^0(\Group, \OO_{\Group})\tensor
            V^{\vee}$.
            Let $U_{-1} := V^{\vee}$ and let $U_0$ be the pullback defined by
            the diagram
            \[
                \begin{tikzcd}
                    U_0 \ar[r, hook]\ar[d, hook]& H^0(\Gbar, \OO_{\Gbar})\tensor_{\kk}
                    V^{\vee}\ar[d, hook]\\
                    V^{\vee} \ar[r, hook, "\sigma_V"]& H^0(\Group,
                    \OO_{\Group})\tensor_{\kk} V^{\vee}
                \end{tikzcd}
            \]
            Let $U_{n}$ for $n=1,2, \ldots $ be constructed
            inductively as the pullback
            \begin{equation}\label{eq:descentByU}
                \begin{tikzcd}
                    U_{n} \ar[r, hook]\ar[d, hook]& H^0(\Gbar, \OO_{\Gbar})\tensor_{\kk}
                    U_{n-1}\ar[d, hook]\\
                    U_{n-1} \ar[r, hook]& H^0(\Gbar, \OO_{\Gbar})\tensor_{\kk}
                    U_{n-2}
                \end{tikzcd}
            \end{equation}
            Finally let $U := \bigcap_{n} U_n$. Diagram~\eqref{eq:descentByU}
            implies that the coaction map on $U$ factors as
            \[
                U\to H^0(\Gbar, \OO_{\Gbar})\tensor_{\kk} U.
            \]
            Therefore the $\Group$-representation $W :=
            \Spec\Sym(U)$ is in fact a $\Gbar$-representation. The inclusion
            $U\into V^{\vee}$ induces a surjective map $V\to W$.
            Consider any other map $V\to W'$ to a
            $\Gbar$-representation; then replacing $W'$ by the image of $V$
            we may assume $W'$ is finite-dimensional, so $W' = \Spec \Sym(U')$
            for some $i\colon U'\to V^{\vee}$ and we obtain a commutative diagram of
            coactions
            \[
                \begin{tikzcd}
                    U'\arrow[d, "i"] \arrow[r, "\sigma_{W'}"]& H^0(\Gbar,
                    \OO_{\Gbar})\tensor_{\kk} U'\arrow[d, "\id \tensor i"]\\
                    V^{\vee} \arrow[r, "\sigma_V"]& H^0(\Group, \OO_{\Group})\tensor
            V^{\vee}
                \end{tikzcd}
            \]
            It follows from the construction of $\{U_n\}$ that $i$ factors as
            $U'\to U\to V^{\vee}$.
            Consider now the case of a general (but, as always, rational)
            $\Group$-representation $V$. For
            every finite-dimensional subrepresentation $V'$ consider the
            corresponding quotient $V'\to W(V')$ and its kernel $K(V')$.
            For an inclusion of finite-dimensional subrepresentations $V'\to
            V''$, the universal property of $W(-)$ gives a map $W(V')\to W(V'')$
            whence an inclusion $K(V') \to K(V'')$.
            The union $K = \bigcup_{V'} K(V')$ is a $\Group$-subrepresentation
            of $V$ and $V/K$ has the required universal property.
        \end{proof}
        \begin{proposition}\label{ref:representabilityForAffine:prop}
            Let $X = \Spec(A)$ be an affine $\Group$-scheme. Then $\Xplus$ is
            represented by a closed subscheme.
        \end{proposition}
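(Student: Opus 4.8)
The plan is to present $\Xplus$ as a closed subscheme of the affine scheme that parametrizes $\Group$-equivariant \emph{linear} maps from $A$ into $H^0(\Gbar,\OO_{\Gbar})\tensor_\kk(-)$, and then to carve out those linear maps that are algebra homomorphisms. First I would unwind the functor. Write $B := H^0(\Gbar,\OO_{\Gbar})$ and regard it as a $\Gbar$-representation via left translation, whose comodule structure is the comultiplication of $B$. For an affine test scheme $S=\Spec R$ equipped with the trivial $\Group$-action one has $\Gbar\times S=\Spec(B\tensor_\kk R)$, so a morphism $\Gbar\times S\to X$ is a $\kk$-algebra homomorphism $A\to B\tensor_\kk R$, and $\Group$-equivariance of the morphism is precisely equivariance for the left-translation action on $B$ and the trivial action on $R$. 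Thus $\Xplus(S)$ is the set of $\Group$-equivariant algebra homomorphisms $A\to B\tensor_\kk R$.

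Next I would represent the ambient functor of $\Group$-equivariant linear maps. Applying Lemma~\ref{ref:GbarReprs:lem} to the $\Group$-representation $A$ yields a universal $\Gbar$-representation quotient $A\onto W$. Since $B\tensor_\kk R$ is a $\Gbar$-representation, the defining property of $W$ gives $\Hom_{\Group}(A,B\tensor_\kk R)=\Hom_{\Gbar}(W,B\tensor_\kk R)$. The crucial input is that the left regular representation $B$ is cofree: evaluation at $1_{\Gbar}$ (the counit of $B$) induces a natural isomorphism $\Hom_{\Gbar}(W,B\tensor_\kk R)\cong\Hom_\kk(W,R)$, whose inverse sends $\phi\colon W\to R$ to the equivariant map $w\mapsto(g\mapsto\phi(g\cdot w))$. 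Consequently the functor $R\mapsto\Hom_{\Group}(A,B\tensor_\kk R)$ is represented by the affine scheme $\cL:=\Spec\Sym(W)$, which carries a universal $\Group$-equivariant linear map $\ell_u\colon A\to B\tensor_\kk\Sym(W)$.

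Finally I would impose the algebra-homomorphism conditions on $\cL$. Multiplicativity is measured by the defect $D\colon A\tensor A\to B\tensor_\kk\Sym(W)$ sending $a\tensor a'$ to $\ell_u(aa')-\ell_u(a)\ell_u(a')$; because $A$ is a $\Group$-algebra and $B$ is a $\Group$-algebra with $\Sym(W)$ carrying the trivial action, $D$ is $\Group$-equivariant. Applying Lemma~\ref{ref:GbarReprs:lem} to $A\tensor A$ and the cofreeness isomorphism, $D$ corresponds to a single $\kk$-linear map $\delta\colon W(A\tensor A)\to\Sym(W)$, where $W(A\tensor A)$ denotes the universal $\Gbar$-quotient of $A\tensor A$. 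Naturality in $R$ shows that a point $\ell\colon\Sym(W)\to R$ gives a multiplicative map exactly when $\ell$ annihilates the image of $\delta$, so multiplicativity defines the closed subscheme cut out by the ideal generated by $\Image\delta$. For the unit, the element $1_A$ is $\Group$-fixed and $B^{\Group}=\kk\cdot 1_B$, since a left-translation-invariant regular function on $\Gbar$ is constant by density of $\Group$; hence $\ell_u(1_A)=c\cdot 1_B$ for a well-defined $c\in\Sym(W)$, and the unit condition is the principal closed condition $c=1$. Intersecting the two loci exhibits $\Xplus$ as a closed subscheme of $\cL=\Spec\Sym(W)$.

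The main obstacle is the passage from the pointwise algebra conditions, a priori infinitely many equations valued in the large space $B\tensor_\kk\Sym(W)$, to an honest closed subscheme; this is exactly what the universal-quotient and cofreeness package delivers, by converting the $\Group$-equivariant multiplicativity defect into one linear map out of $W(A\tensor A)$ whose image generates the defining ideal. The secondary technical point is a clean, functorial proof of the cofree isomorphism $\Hom_{\Gbar}(W,B\tensor_\kk R)\cong\Hom_\kk(W,R)$ together with its compatibility with the algebra structures entering the defect computation.
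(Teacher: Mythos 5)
Your linearization machinery is sound: identifying $\Xplus(\Spec R)$ with the set of $\Group$-equivariant algebra maps $A\to B\tensor_\kk R$, representing the ambient functor of equivariant \emph{linear} maps by $\Spec\Sym(W)$ via Lemma~\ref{ref:GbarReprs:lem} together with cofreeness of the comodule $B\tensor_\kk R$, and cutting out multiplicativity and unitality by the closed conditions on $\Image\delta$ and $c=1$ --- all of this is correct, and it is a genuinely different mechanism from the paper's. The gap is in what it proves. In this paper ``represented by a closed subscheme'' means a closed subscheme \emph{of $X$}: the paper's own proof produces $\Xplus=V(I)\subset X$, where $I\subset A$ is the ideal generated by the kernel $\mathcal{G}$ of $A\onto W$, so that the unit map $\ioneX\colon\Xplus\to X$ is a closed immersion. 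Your construction exhibits $\Xplus$ as a closed subscheme of the auxiliary scheme $\cL=\Spec\Sym(W)$, which amounts only to representability by an affine $\kk$-scheme. The stronger statement is not decorative: the closed immersions $Z_n\into X_n\into X$ in Definition~\ref{ref:formalizationGeometric:def}, the closed immersions $Z_n\into Z_{n+1}$ required by Definition~\ref{def:formalschemes}, and the pullback functor along $Z_n\into X$ in the proof of Theorem~\ref{ref:intro:mainthm} all invoke precisely this property, so your proof as written would not support the later parts of the paper.

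The omission is repairable inside your framework, but it requires the one computation you skipped: identify $\ioneX$ on your model and show it is a closed immersion. With $J\subset\Sym(W)$ your ideal, the pullback of $\ioneX$ is $(\epsilon\tensor\id)\circ\ell_u\colon A\to\Sym(W)/J$, where $\epsilon$ is the counit of $B$; by the counit axiom this is the composite $A\onto W\into\Sym(W)\onto\Sym(W)/J$. Modulo $J$ the universal family is an algebra homomorphism and $\epsilon$ is an algebra map, so this composite is an algebra homomorphism; its image is therefore a subalgebra of $\Sym(W)/J$, and it contains the residues of $W$, which generate $\Sym(W)/J$ as an algebra. Hence the pullback is surjective and $\ioneX$ is a closed immersion, recovering the intended statement. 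It is worth contrasting this with the paper's route, which gets everything at once with far less machinery: since $I=\mathcal{G}\cdot A$, the quotient $A/I$ is a quotient of the $\Gbar$-representation $W$ and hence itself a $\Gbar$-algebra, and every equivariant family $\Gbar\times S\to X$ kills $\mathcal{G}$ and therefore $I$; so $\Xplus=\Spec(A/I)$ sits inside $X$ from the start, with no cofree-comodule identification or defect equations needed.
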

        \begin{proof}
            Let $A \to A/\mathcal{G}$ be the universal quotient
            $\Gbar$-representation as in Lemma~\ref{ref:GbarReprs:lem}. Let
            $I\subset A$ be the ideal generated by $\mathcal{G}$.
            The closed scheme $\Spec(A/I)$ is $\Group$-stable and in fact its
            $\Group$-action extends to $\Gbar$.
            We claim
            that $\Xplus = V(I) \simeq \Spec(A/I)$.
            Pick a $\Gbar$-scheme $Y$ with a
            $\Group$-equivariant map $\varphi\colon Y\to X$. The map $\varphi$
            factors through the $\Gbar$-scheme $\Spec H^0(Y, \OO_Y)$, so we
            may assume that $Y$ is affine. The pullback $A\to H^0(Y, \OO_Y)$ maps
            $A$ to a $\Gbar$-representation, hence maps $\mathcal{G}$ to zero,
            and thus factors through $\Xplus$.
        \end{proof}

        Now we slightly generalize
        Proposition~\ref{ref:representabilityForAffine:prop} taking into
        account open immersions.  We say that a $\Group$-scheme $X$ is
        \emph{locally $\Group$-linear} if it is covered by $\Group$-stable
        open affine subschemes. We say that $X$ is locally $\Gbar$-linear if
        it is covered by $\Gbar$-stable open affine subschemes.
        \begin{lemma}[open immersions,~{\cite[Prop~5.2]{jelisiejew_sienkiewicz__BB}}]\label{ref:openImmersion:lem}
            Let $U\into X$ be an open immersion of $\Group$-schemes. Then the
            diagram
            \[
                \begin{tikzcd}
                    U^{+} \arrow[r, "\iinfty{U}"]\arrow[d, hook] & U^{\Group}\arrow[d, hook]\\
                    \Xplus \arrow[r, "\iinftyX"] & X^{\Group}
                \end{tikzcd}
            \]
            is cartesian.
        \end{lemma}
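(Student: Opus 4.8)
I would check the cartesian property on $S$-points for every $\kk$-scheme $S$. First I would unwind the fibre product: because $U^{\Group}\into X^{\Group}$ is an open immersion, hence a monomorphism, a point of $\Xplus\times_{X^{\Group}}U^{\Group}$ over $S$ is simply a family $\varphi\in\Xplus(S)$ whose limit $\iinftyX(\varphi)=\varphi_{|0\times S}\colon S\to X^{\Group}$ factors through the open subscheme $U^{\Group}=U\times_{X}X^{\Group}$, equivalently through $U$. The tautological map $U^{+}\to\Xplus\times_{X^{\Group}}U^{\Group}$, sending $\psi$ to $(j\circ\psi,\iinfty{U}(\psi))$ for the open immersion $j\colon U\into X$, is injective since $j$ is a monomorphism. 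So the entire content of the lemma is the factorization statement: \emph{if $\varphi\colon\Gbar\times S\to X$ is $\Group$-equivariant and $\varphi_{|0\times S}$ lands in $U$, then $\varphi$ factors through $U$.}

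To prove this, I would put $W:=\varphi^{-1}(U)$, an open subscheme of $Y:=\Gbar\times S$. As $\varphi$ is equivariant for the action $g\cdot(m,s)=(gm,s)$ and $U$ is $\Group$-stable, $W$ is $\Group$-stable, and by hypothesis it contains the zero section $0\times S$; it then suffices to show $W=Y$. Whether an open immersion is an isomorphism can be tested after the faithfully flat base change $\kk\to\kkbar$, so I may assume $\kk=\kkbar$ and use the Kempf line $\lambda\colon\mathbb{A}^1_{\kkbar}\to\Gbar$, which satisfies $\lambda(0)=0_{\Gbar}$ and $\lambda(\Gmultkkbar)\subseteq Z(\Group)\subseteq\Group$.

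The geometric idea is that $\lambda$ flows every point of $\Gbar$ into the absorbing element $0_{\Gbar}$, so a $\Group$-stable open containing the zero section is forced to be everything. Concretely I would introduce
\[
    \beta\colon\mathbb{A}^1_{\kkbar}\times Y\to Y,\qquad \beta(t,m,s)=(\mu(\lambda(t),m),s).
\]
Over $t=0$ one has $\mu(0_{\Gbar},m)=0_{\Gbar}$, so $\beta(\{0\}\times Y)\subseteq 0\times S\subseteq W$ and thus $\{0\}\times Y\subseteq\beta^{-1}(W)$. Over $\Gmultkkbar$ the map $\beta$ is the $\Group$-action precomposed with $\lambda$, so $\Group$-stability of $W$ yields $\beta^{-1}(W)\cap(\Gmultkkbar\times Y)=\Gmultkkbar\times W$, whence the closed set $C:=(\mathbb{A}^1_{\kkbar}\times Y)\setminus\beta^{-1}(W)$ satisfies $C\cap(\Gmultkkbar\times Y)=\Gmultkkbar\times F$ with $F:=Y\setminus W$. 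Then $\Gmultkkbar\times F\subseteq C$, hence $\overline{\Gmultkkbar\times F}\subseteq C$; since $\Gmultkkbar\times F$ is dense in $\mathbb{A}^1_{\kkbar}\times F$, this closure contains $\{0\}\times F$. But $\{0\}\times F\subseteq\{0\}\times Y\subseteq\beta^{-1}(W)$ is disjoint from $C$, forcing $F=\emptyset$, i.e. $W=Y$.

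The step I expect to require the most care is the density $\{0\}\times F\subseteq\overline{\Gmultkkbar\times F}$ for an arbitrary, possibly non-Noetherian and non-reduced $S$. I would justify it by the observation that, for any ring $B$, the variable $t$ is a non-zero-divisor in $B[t]$, so $\Gmultkkbar\times F=D(t)\times F$ meets every generic point of $\mathbb{A}^1_{\kkbar}\times F$ and is therefore dense; alternatively one first reduces to $S$ of finite type by noetherian approximation and then argues with honest orbit closures via $0_{\Gbar}\in\overline{\Group\cdot m}$, which is again $\lambda$ applied to the orbit map $t\mapsto\mu(\lambda(t),m)$. Apart from the existence of the Kempf line, guaranteed here by the Kempf hypothesis, every other step is formal.
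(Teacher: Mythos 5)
Your proposal is correct and follows essentially the same route as the paper's (sketched) proof: both reduce the cartesian claim to showing that a $\Group$-stable open subset of $\Gbar\times S$ containing $0\times S$ must be all of $\Gbar\times S$, and both establish this by using the Kempf line to flow arbitrary points into the zero section. The paper states this as ``$0\in\Gbar$ has no $\Group$-stable open neighbourhoods except $\Gbar$'' and relativizes over $S$, while you carry out the same idea directly over $S$ via the density of $\Gmultkkbar\times F$ in $\mathbb{A}^1_{\kkbar}\times F$ --- a more detailed write-up of the argument the paper leaves as a sketch.
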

        \begin{proof}[Sketch of proof]
            Arguing as in the proof of Proposition~\ref{ref:centertrick:prop}
            using the Kempf torus, we see that the point $0\in \Gbar$ has no
            $\Group$-stable open neighbourhoods except the whole $\Gbar$, so
            for every scheme $S$, the locus $0 \times S \subset
            \Gbar \times S$ has no $\Group$-stable open neighbourhoods except
            the whole $\Gbar \times S$. Hence $\varphi\colon\Gbar \times S\to X$ factors
            through $U$ if and only if $\varphi_{|0 \times S}\colon S\to
            X^{\Group}$ factors through $U^{\Group}$.
        \end{proof}

        \begin{proposition}\label{ref:representabilityForLocLin:prop}
            Let $X$ be a locally $\Group$-linear scheme. Then $\iinftyX\colon
            \Xplus\to X^{\Group}$ is affine, so $\Xplus$ is represented by a
            locally linear $\Gbar$-scheme.
        \end{proposition}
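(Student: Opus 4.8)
The plan is to reduce the global statement to the affine case treated in Proposition~\ref{ref:representabilityForAffine:prop}, using that both representability and affineness of a morphism can be checked Zariski-locally on the target $X^{\Group}$. First I would invoke the hypothesis to pick a cover of $X$ by $\Group$-stable open affine subschemes $\{U_i\}$. Taking fixed points commutes with open immersions, so $U_i^{\Group} = U_i \cap X^{\Group}$ (this is exactly the open immersion $U_i^{\Group}\into X^{\Group}$ appearing in Lemma~\ref{ref:openImmersion:lem}), and hence the $\{U_i^{\Group}\}$ form an open cover of $X^{\Group}$. The goal then becomes to describe $\iinftyX$ after base change to each $U_i^{\Group}$ and to check that these descriptions glue.

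Over a single chart I would apply Lemma~\ref{ref:openImmersion:lem} to the open immersion $U_i \into X$: the cartesian square it provides identifies the base change $\Xplus \times_{X^{\Group}} U_i^{\Group}$ with $U_i^{+}$, and the restriction of $\iinftyX$ over $U_i^{\Group}$ with $\iinfty{U_i}$. By Proposition~\ref{ref:representabilityForAffine:prop}, $U_i^{+}$ is represented by a closed, hence affine, subscheme of the affine scheme $U_i$; since $U_i^{\Group}$ is a closed subscheme of $U_i$ and therefore also affine, the map $\iinfty{U_i}\colon U_i^{+}\to U_i^{\Group}$ is a morphism of affine schemes and in particular affine. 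To glue these charts I would run the same argument for the two open immersions $U_i\cap U_j \into U_i$ and $U_i\cap U_j \into U_j$: the associated cartesian squares show that the restrictions of $U_i^{+}$ and of $U_j^{+}$ over $U_i^{\Group}\cap U_j^{\Group}$ both coincide with $(U_i\cap U_j)^{+}$, which furnishes canonical gluing data on overlaps. Since affine morphisms descend Zariski-locally on the base, I conclude that $\Xplus$ is represented by a scheme and that $\iinftyX$ is affine.

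Finally I would record the $\Gbar$-structure and the local linearity. The functor $\Xplus=\Map(\Gbar,X)^{\Group}$ carries a natural left $\Gbar$-action given by precomposition with right multiplication on $\Gbar$, which preserves the left $\Group$-equivariance condition; once $\Xplus$ is represented this makes it a $\Gbar$-scheme. Each chart $U_i^{+}=\iinftyX^{-1}(U_i^{\Group})$ is open in $\Xplus$, and it is $\Gbar$-stable because $0_{\Gbar}$ is absorbing, so $\iinfty{}$ of $g\cdot\varphi$ equals $\iinfty{}$ of $\varphi$ and the action preserves the fibres of $\iinftyX$. Together with the affineness just established, the $U_i^{+}$ thus form a $\Gbar$-stable open affine cover of $\Xplus$, which is precisely the assertion that $\Xplus$ is a locally linear $\Gbar$-scheme. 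The one step that demands care — and the main obstacle — is the gluing: one must verify that the affine models over the charts agree on overlaps, but this is exactly what the cartesian squares of Lemma~\ref{ref:openImmersion:lem} guarantee, so no input beyond the affine case and the open-immersion lemma is needed.
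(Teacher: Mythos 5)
Your proposal is correct and follows essentially the same route as the paper: cover $X$ by $\Group$-stable open affines, identify the base change of $\iinftyX$ over each $U_i^{\Group}$ with $\iinfty{U_i}$ via the cartesian square of Lemma~\ref{ref:openImmersion:lem}, get affineness there from Proposition~\ref{ref:representabilityForAffine:prop}, and conclude by the Zariski-local nature of affine morphisms. Your additional spelled-out steps (gluing over overlaps and the $\Gbar$-action by right translation making the charts $\Gbar$-stable) are exactly the details the paper leaves implicit, and they are correct.
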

        \begin{proof}
            Let $\{U_i\}$ be a $\Group$-stable open affine cover of $X$.
            By Lemma~\ref{ref:openImmersion:lem} the pullback of $\iinftyX$
            via $U_i^{\Group} \to X^{\Group}$ is the map
            $\iinfty{U}\colon U^+\to U^{\Group}$. This map is affine
            by Proposition~\ref{ref:representabilityForAffine:prop}. It
            follows that the map
            $\iinftyX$ becomes affine after the pullback to the cover
            $\bigsqcup_i U_i\to X$, so $\iinftyX$ is affine.
        \end{proof}
        \begin{remark}
            Proposition~\ref{ref:representabilityForLocLin:prop} can be stated
            with weaker assumptions: we do not really
            need $\{U_i\}$ to cover $X$, just the inclusion $\bigcup_i U_i \supset
            X^{\Group}$.
        \end{remark}

        \subsection{Representability for affine schemes: reductive case}

        Proposition~\ref{ref:representabilityForAffine:prop}, while
        satisfactory for our general purposes, says little about the resulting
        scheme. In this section we prove that one can say more in the
        reductive case.

        Fix a reductive monoid $\Gbar$ with unit group $\Group$. We
        additionally assume that the variety $\Gbar$ is normal, however we do
        not need to assume that $\Gbar$ has a zero.
        All maximal tori of $\Group$ are conjugate. We fix one such torus $T$
        and its closure $\Tbar \subset \Gbar$. We say that a
        $T$-representation is a \emph{$\Tbar$-representation} if the action of
        $T$ extends to an action of $\Tbar$. A simple $T$-representation is an
        \emph{outsider representation} if it is not a $\Tbar$-representation.

        \begin{lemma}[Extension principle]\label{ref:extensionPrinciple:lem}
            Let $V$ be a $\Group$-representation. Then the following are equivalent
            \begin{enumerate}
                \item\label{it:epone} the representation $V$ is a $\Gbar$-representation,
                \item\label{it:eptwo} the representation $V$ is a $\Tbar$-representation.
            \end{enumerate}
        \end{lemma}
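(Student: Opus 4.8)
The implication \eqref{it:epone}$\Rightarrow$\eqref{it:eptwo} is immediate: restricting a $\Gbar$-action along the inclusion $\Tbar\into\Gbar$ exhibits $V$ as a $\Tbar$-representation. So the entire content lies in the converse, and the plan is to prove it by extending a coaction morphism geometrically. First I would reduce to $V$ finite-dimensional: a $\Group$-representation is the union of its finite-dimensional subrepresentations, and both properties can be tested on these pieces (for \eqref{it:eptwo} this is just a statement about weights, since a torus is linearly reductive). Having fixed a finite-dimensional $V$ that is a $\Tbar$-representation, I regard the representation morphism $\rho\colon\Group\to\End(V)$. Because $\Group$ is open and dense in the normal variety $\Gbar$, this is a rational map $\rho\colon\Gbar\dashrightarrow\End(V)$, and $V$ is a $\Gbar$-representation exactly when $\rho$ extends to a morphism on all of $\Gbar$; the extension is then automatically the sought coaction. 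Since $\End(V)$ is an affine space and $\Gbar$ is normal, $\rho$ extends if and only if it is regular in codimension one, i.e.\ along every prime divisor $D$ contained in the boundary $\Gbar\setminus\Group$.

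The next step is to exploit equivariance to reduce this to a statement about $\Tbar$. For the action of $\Group\times\Group$ by $(a,b)\cdot x=axb$ the rational map $\rho$ is equivariant, $\rho(axb)=\rho(a)\rho(x)\rho(b)$, because left and right multiplication by units are automorphisms of $\Gbar$. Hence its locus of regularity is $\Group\times\Group$-stable, and it suffices to bound $\rho$ along boundary divisors meeting $\Tbar$. Here I would invoke the structure theory of normal reductive monoids: one has $\Gbar=\Group\,\Tbar\,\Group$ and $\Gbar$ is a spherical $\Group\times\Group$-variety, so every boundary divisor is $\Group\times\Group$-stable, its valuation is $\Group\times\Group$-invariant, and such invariant valuations are controlled by their restriction to $\kk(T)$, matching the toric boundary valuations of $\Tbar$. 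Hypothesis \eqref{it:eptwo} says precisely that every matrix coefficient of $V$, restricted to $T$, lies in $H^0(\Tbar,\OO_{\Tbar})$, hence is nonnegative on all toric boundary valuations; transporting this through the correspondence yields regularity of each matrix coefficient along every boundary divisor of $\Gbar$, so $\rho$ extends.

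The main obstacle is the transverse control implicit above: a priori $\rho$ could be regular \emph{along} $\Tbar$ yet acquire poles in directions normal to $\Tbar$, so knowing that $\rho|_{\Tbar}$ extends is not by itself enough. (One checks on examples such as $\Gbar=\End(V)=\mathrm{Mat}_n$ that the analogous property fails for an individual function and is recovered only from the full $\Group\times\Group$-module of matrix coefficients.) Resolving this is exactly where the structure theory enters: I expect to apply the local structure theorem for the spherical variety $\Gbar$ near $\Tbar$, presenting a neighbourhood as a product of a unipotent ``big cell'' with a toric slice modelled on $\Tbar$. In such coordinates regularity in the unipotent directions is automatic (these contribute polynomially), and the only genuine condition is the toric one supplied by \eqref{it:eptwo}. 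The decomposition $\Gbar=\Group\,\Tbar\,\Group$ together with $\Group\times\Group$-equivariance then propagates regularity from $\Tbar$ to all of $\Gbar$, giving the desired extension of $\rho$ and completing the argument.

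I note in passing that the same conclusion can be phrased module-theoretically via Lemma~\ref{ref:GbarReprs:lem}: writing $V\onto W$ for the universal $\Gbar$-representation quotient, the claim is equivalent to showing that the kernel is supported on outsider weights, so that it vanishes when $V$ has no such weights; this is the formulation I would keep in mind when organizing the weight bookkeeping in the spherical/toric comparison.
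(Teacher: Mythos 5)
Your route and the paper's diverge at the outset: for the nontrivial implication \ref{it:eptwo}$\implies$\ref{it:epone} the paper gives no argument of its own but cites \cite[Theorem~5.2]{Renner__Linear_algebraic_monoids}, remarking only that this citation is precisely where normality of $\Gbar$ is used. What you propose is, in outline, a reconstruction of the proof of that cited theorem, and the skeleton is sound: reduce to finite-dimensional $V$; regard $\rho\colon \Group\to\End(V)$ as a rational map on $\Gbar$; use normality and the fact that $\End(V)$ is an affine space to reduce extension to the absence of poles along the boundary divisors (which are indeed $\Group\times\Group$-stable and of pure codimension one); then use $\Gbar=\Group\,\Tbar\,\Group$ and the local structure theorem for the spherical $\Group\times\Group$-variety $\Gbar$ --- an open cell of the form $R_u(P)\times Z\times R_u(P^-)$ with $Z$ open in $\Tbar$ and meeting every boundary divisor --- so that regularity along the boundary becomes exactly the toric condition supplied by \ref{it:eptwo}. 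You also correctly flag the genuine danger (poles transverse to $\Tbar$) and correctly locate its resolution in the local structure theorem. Two caveats. First, your intermediate claim that invariant valuations are ``controlled by their restriction to $\kk(T)$'' is not literally meaningful: $T$ has positive codimension in $\Group$, so valuations on $\kk(\Gbar)$ do not restrict to $\kk(T)$; the correct mechanism goes through $B\times B^-$-semiinvariant functions, whose restrictions to $T$ are characters (or simply through the local structure theorem you invoke immediately afterwards), so this is an imprecision rather than a fatal error. Second, the facts you treat as black boxes ($\Gbar=\Group\,\Tbar\,\Group$, the local structure theorem for normal reductive monoids, the matching of boundary divisors of $\Gbar$ with the fan of $\Tbar$) are themselves theorems from the same body of work as the result being proved, so a fully written version of your argument essentially \emph{is} Renner's proof rather than a shortcut around it. That is not a defect --- it makes visible where normality enters, which the paper's one-line citation deliberately hides, and it explains why the hypothesis cannot simply be dropped --- but it buys self-containedness at the cost of importing the entire structure theory, whereas the paper's approach isolates that dependence in a single reference.
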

        \begin{proof}
            The implication~\ref{it:epone}$\implies$\ref{it:eptwo} is trivial,
            and the converse
            is~\cite[Theorem~5.2]{Renner__Linear_algebraic_monoids}.
            (the referenced theorem requires $\Gbar$ to be normal and this is
            the main point where we use normality of $\Gbar$).
        \end{proof}

        We now prove the representability of $\Xplus$ for $X$ affine.
        \begin{proposition}\label{ref:representabilityForAffineReductiveCase:prop}
            Let $X = \Spec(A)$ be an affine $\Group$-scheme. Then $\Xplus$ is
            represented by a closed subscheme whose ideal is the smallest
            $\Group$-ideal containing all outsider representations of $T$ in
            $A$.
        \end{proposition}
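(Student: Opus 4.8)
The plan is to compute the ideal $I$ produced by Proposition~\ref{ref:representabilityForAffine:prop} directly in terms of the $T$-weight decomposition of $A$, using the Extension principle (Lemma~\ref{ref:extensionPrinciple:lem}) to trade the condition ``is a $\Gbar$-representation'' for the purely combinatorial condition ``contains no outsider $T$-summand''. The point is that $\Xplus = V(I)$ where $I = (\mathcal{G})$ is the ideal generated by the kernel $\mathcal{G} = \ker(A \to A/\mathcal{G})$ of the universal quotient $\Gbar$-representation of Lemma~\ref{ref:GbarReprs:lem}, so it suffices to describe $\mathcal{G}$ and then the ideal it generates.

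First I would record the following consequence of $T$ being linearly reductive: since every $T$-representation is semisimple, a $T$-representation is a $\Tbar$-representation if and only if none of its simple summands is an outsider representation. The ``only if'' direction is the analogue for $\Tbar$ of the fact (already used for $\Gbar$) that a subrepresentation of a $\Tbar$-representation is again a $\Tbar$-representation, and the ``if'' direction is that an arbitrary direct sum of $\Tbar$-representations is a $\Tbar$-representation. Write $A = \bigoplus_\lambda A_\lambda$ for the decomposition into $T$-isotypic components and let $A_{\mathrm{out}} \subseteq A$ be the sum of those $A_\lambda$ for which $\lambda$ is an outsider representation.

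Next I would identify $\mathcal{G}$. By Lemma~\ref{ref:GbarReprs:lem} it is the smallest $\Group$-subrepresentation of $A$ whose quotient is a $\Gbar$-representation, and by the Extension principle this is the smallest $\Group$-subrepresentation whose quotient is a $\Tbar$-representation. Any $\Group$-subrepresentation is in particular $T$-stable, so it decomposes as $\bigoplus_\lambda(\mathcal{G}\cap A_\lambda)$ and its quotient is $\bigoplus_\lambda A_\lambda/(\mathcal{G}\cap A_\lambda)$. By the summand-wise criterion of the previous paragraph this quotient is a $\Tbar$-representation if and only if every outsider component is killed, that is if and only if $A_{\mathrm{out}} \subseteq \mathcal{G}$. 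Hence $\mathcal{G}$ is exactly the smallest $\Group$-subrepresentation of $A$ containing $A_{\mathrm{out}}$.

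Finally I would pass from $\mathcal{G}$ to the ideal $I = (\mathcal{G})$ and show that it equals the smallest $\Group$-ideal $J$ containing all outsider representations. For $J\subseteq I$: the ideal generated by a $\Group$-subrepresentation is automatically $\Group$-stable, since $g\cdot(am) = (ga)(gm)$ lies in $A\cdot\mathcal{G}$ for $a\in A$ and $m\in\mathcal{G}$; thus $I$ is a $\Group$-ideal containing $A_{\mathrm{out}}\subseteq\mathcal{G}$, and minimality of $J$ yields $J\subseteq I$. For $I\subseteq J$: the ideal $J$ is in particular a $\Group$-subrepresentation containing $A_{\mathrm{out}}$, so by the previous step it contains $\mathcal{G}$, and being an ideal it then contains $I=(\mathcal{G})$. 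Therefore $I=J$. I expect the only genuinely delicate point to be the semisimplicity reduction of the second step — that being a $\Tbar$-representation is detectable summand by summand — which is exactly where linear reductivity of the torus $T$ is essential and which lets the argument avoid any direct manipulation of the $\Gbar$-coaction; over a non-split $T$ one simply works with simple $T$-representations (Galois orbits of weights) in place of characters, leaving the isotypic decomposition and the criterion unchanged.
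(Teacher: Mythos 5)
Your proposal is correct, but it takes a genuinely different route from the paper. You build on Proposition~\ref{ref:representabilityForAffine:prop}, so representability is already settled and your only task is to identify the ideal: via the Extension Principle (Lemma~\ref{ref:extensionPrinciple:lem}) and semisimplicity of $\Rep_T$ you show that the universal kernel $\mathcal{G}$ from Lemma~\ref{ref:GbarReprs:lem} is the smallest $\Group$-subrepresentation containing the outsider isotypic part $A_{\mathrm{out}}$, and then that the ideal $(\mathcal{G})$ is the smallest $\Group$-ideal containing $A_{\mathrm{out}}$. The paper bypasses Proposition~\ref{ref:representabilityForAffine:prop} entirely: it first observes that $\ioneX\colon \Xplus\to X$ is an isomorphism when $X$ is itself a $\Gbar$-scheme, then imports the torus case \cite[Proposition~4.5]{jelisiejew_sienkiewicz__BB} to obtain the closed subscheme $\tilde{X}\subset X$ whose ideal $J$ is generated by the outsider representations, sets $I$ to be the ideal generated by $\Group\cdot J$, uses linear reductivity of $T$ and the Extension Principle to conclude that $X'=\Spec(A/I)$ is a $\Gbar$-scheme, and finally checks the universal property by hand through the pullback $\varphi^{\#}\colon A\to H^0(\Gbar,\OO_{\Gbar})\tensor_{\kk} C$. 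Both arguments turn on the same two inputs, linear reductivity of $T$ and the Extension Principle, but yours is self-contained within this paper (no appeal to the earlier torus result) and cleanly separates representability from the computation of the ideal, additionally identifying the universal kernel $\mathcal{G}$ itself; the paper's version instead yields the geometric description of $\Xplus$ as the largest $\Group$-stable closed subscheme of the torus \BBname{} decomposition $\tilde{X}$. One point you should make explicit: the ``only if'' half of your summand-wise criterion uses that a $T$-subrepresentation of a $\Tbar$-representation is again a $\Tbar$-representation; this holds by the same density argument ($T$ is dense in $\Tbar$) that underlies the paper's analogous assertion for $\Gbar$, but it is not a statement the paper records for $\Tbar$, so a line of proof is needed.
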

        \begin{proof}
            Let us discuss the easiest case: $X$ a $\Gbar$-scheme. Let
            $\mubar\colon \Gbar \times X\to X$ denote the action. Then every
            family $\varphi_1\colon S\to X$ extends uniquely to a
            $\Group$-equivariant family $\varphi = \mubar \circ (\id \times
            \varphi_1)\colon \Gbar \times S\to X$, hence $\ioneX\colon \Xplus \to
            X$ is an isomorphism.

            \newcommand{\BBforT}{\tilde{X}}%
            Let us return to the general case.
            Consider $X$ as a $T$-variety and let $\BBforT$ be the
            \BBname{} decomposition for $T$. As $T$ is a torus, it is linearly
            reductive, so
            by~\cite[Proposition~4.5]{jelisiejew_sienkiewicz__BB} the scheme
            $\BBforT$ is a closed subscheme of $X$.
            Let $J := I(\BBforT)\subset A$ be its ideal and let $I \subset A$ be the ideal
            generated by $\Group \cdot J$.
            By~\cite[Proposition~4.5]{jelisiejew_sienkiewicz__BB} the ideal
            $J$ is generated by all irreducible
            $T$-subrepresentations of $A$ that are not $\Tbar$-representations.
            From linear reductivity of $T$ we deduce that $A/I$ is a $\Tbar$-representation.

            Let $X' = \Spec(A/I)$. By
            construction, $X'$ is the largest $\Group$-scheme contained in
            $\BBforT$.
            The coordinate ring $A/I$ of $X'$ is a $\Group$-representation
            that is also a $\Tbar$-representation, hence by Extension
            Principle~\ref{ref:extensionPrinciple:lem} it is a
            $\Gbar$-representation, so $X'$ is a $\Gbar$-scheme.
            Every $\Group$-equivariant family $\varphi\colon \Gbar \times
            \Spec(C)\to
            X$ induces a $\Group$-equivariant pullback map
            \[
                \varphi^{\#}\colon A\to H^0(\Gbar,
                \OO_{\Gbar})\tensor_{\kk} C.
            \]
            The right hand side is a
            $\Gbar$-representation, so a $\Tbar$-representation, so
            $\ker(\varphi^{\#})$ contains $J$. The pullback is
            $\Group$-equivariant, so $\ker(\varphi^{\#})$ contains $I$ as
            well. This shows that $\varphi$ factors through
            $X'\into X$ and consequently that $\Xplus = (X')^{+}$. But $X'$ is an affine
            $\Gbar$-scheme, hence the ``easiest case'' from the beginning of
            the present proof applies and gives $X'^+ = X'$.
        \end{proof}

        \begin{example}[Issues with regularity]\label{ex:problemWithSmoothness}
            Let $x\in X = \Spec(A)$ be a $\Group$-fixed $\kk$-point which we
            identify with $\isectionX(x)\in \Xplus$. Applying
            Proposition~\ref{ref:representabilityForAffineReductiveCase:prop}
            we get that for every $n\in \mathbb{N}$ the truncated local ring
            $A^{n, +} := \OO_{\Xplus, x}/\mm_{x}^{n+1}$ is the
            quotient of $A^n := \OO_{X, x}/\mm_x^{n+1}$ by the smallest $\Group$-ideal
            containing all outsider representations of $A^n$ with respect to
            $T$.

			If the group $\Group$ was linearly reductive, we could take
            a $\Group$-equivariant section $s$ of $\mm_x\onto \mm_x/\mm_x^2$ and
            conclude that $A^{n, +}$ is the quotient of $A^n$ by the
            image under $s$ of the $\Group$-representation generated by the
            $T$-outsider representations of $\mm_x/\mm_x^2$.  Consequently, the complete
            local ring $\hat{\OO}_{\Xplus, x}$ would be the quotient of
            $\hat{\OO}_{X, x}$ by a sequence of elements whose images in the
            cotangent space are linearly independent. When
            $x\in X$ is regular, we would conclude that $x\in \Xplus$ is
			regular. This would be essentially the classical
			Iversen's~\cite{Iversen__fixed_points} argument on the smoothness of fixed
			points of $\Group$ for $\Group$ acting on smooth $X$.
			However, for groups $\Group$ that are not linearly reductive Iversen's
            argument fails and so we cannot hope for the existence of section
            $s$ in our case.
        \end{example}

        \section{Formal $\Group$-schemes}\label{sec:formalSchemes}

    In this section we prove the main results for the formal \BBname{} functor
    $\Xhat$
    that we will introduce in the next section. The main advantage of the
    formal functor over $\Xplus$ is that it is defined on the affine level;
    correspondingly in this section we speak the language of algebra rather than
    geometry. Throughout, we assume that $\Gbar$ is a Kempf monoid and that $0\in
    \mathbb{N}$.

    \subsection{Setup}
    We begin with basic definitions. A \emph{formal abelian group} is a
    sequence $(A_n)_{n\in \mathbb{N}}$ of abelian groups together with
    surjections $\pi_n\colon A_{n+1}\onto A_n$.
    A morphism of formal abelian groups $(A_n)\to (A'_n)$ is a family of
    morphisms $f_n\colon A_n\to A'_{n}$ such that the diagram
    \[
        \begin{tikzcd}
            A_{n+1} \arrow[r, two heads, "\pi_n"]\arrow[d, "f_{n+1}"] & A_n\arrow[d, "f_n"]\\
            A'_{n+1} \arrow[r, two heads, "\pi_n'"] & A'_n
        \end{tikzcd}
    \]
    commutes for every $n$.
    We obtain an abelian category of formal abelian groups. Using
    abstract-nonsense for this category, we can define formal algebras (as
    algebra objects), $\Group$-actions etc. Below we gather some specific
    cases.
    For a $\kk$-algebra $B$, we say that a formal
    abelian group is a \emph{formal $B$-algebra} if all $A_n$'s are $B$-algebras
    and $\pi_n$ are maps of $B$-algebras. A \emph{module} over a formal
    $B$-algebra $(A_n)$ is a formal abelian group $(M_n)$ where additionally each
    $M_n$ is an $A_n$-module and $\pi_n$ is a homomorphism of $A_{n+1}$-modules.
    We say that a formal $B$-algebra $(A_n)$ is a $\Group$-algebra if all
    algebras $A_n$ are $\Group$-algebras, the maps $\pi_n$ are $\Group$-equivariant homomorphisms
    of algebras and moreover the structure maps $B\to A_n$ are
    $\Group$-equivariant for the trivial $\Group$-action on $B$. Similarly we
    define $\Gbar$-algebras and $\Group$- or $\Gbar$-modules. In particular,
    whenever we speak about a $\Group$-action on a module $(M_n)$ over a
    $\Group$-algebra $(A_n)$, we assume that the multiplication maps
    $A_n\tensor M_n\to M_n$ are $\Group$-equivariant.

    \begin{definition}[Formalization]\label{ref:formalization:def}
        Let $A$ be an abelian group and $(I_n \subset A)_{n\in \mathbb{N}}$ an
        increasing sequence of subgroups. The \emph{associated formal abelian
        group} is
        $(A/I_n)_{n}$ with the natural surjections. When $A$ is a ring and
        $I_n := I^{n+1}$ for an ideal $I \subset A$ then the associated
        formal abelian group is a ring which we call the \emph{formalization
        of $(A, I)$}. When $M$ is an $A$-module, then
        $(M/I^{n+1}M)_n$ is an $(A/I^{n+1})_n$-module called the
        \emph{formalization of $(M, I)$}.
    \end{definition}
    \begin{definition}[adic algebras and
        algebraizations]\label{ref:adicAndAlgebraization:def}
        We say that a formal algebra $\AA = (A_n)$ is \emph{adic} if for every $m \geq
        n$ the map $A_m\to A_n$ is surjective and
        \[
            \ker(A_m\to A_n) = \ker(A_m \to A_0)^{n+1}.
        \]
        This implies in particular that $\ker(A_m \to A_0)^{m+1} = 0$ for every
        $m$.
        For a formal adic algebra $(A_n)$ its \emph{algebraization} is
        an algebra $A$ with an ideal $I$ such that the formalization of $(A,
        I)$ is isomorphic to $(A_n)$. A module $(M_n)_n$ over an adic
        algebra $(A_n)_n$ is \emph{adic} if the maps $M_{n}\to M_{n-1}$ are surjective and for every
        $m\geq n$ we have $\ker(M_m\to M_n) = \ker(A_m \to A_n) M_m$ so that the
        maps induce isomorphisms $M_m\tensor_{A_m} A_n\to M_n$.
    \end{definition}
    Formalizations of algebras are adic. In the next sections we will prove
    that in certain situations ``standard'' adic algebras admit
    algebraizations.

        \subsection{Serre subcategories of $\Group$-linearized sheaves}
    We will be interested in constructing algebraizations of a given formal
    algebra in an $\Group$-equivariant way (see Section~\ref{ssec:GequivAlg}). If $\Group$
    was linearly reductive, this would mean that we just look separately at each
    $\lambda$-isotypic component for a simple $\Group$-representation $\lambda$.
    But the category of $\Group$-representations is far from semisimple, so
    working with irreducible representations makes little sense.
    We need a
    generalization of them. While in the linearly reductive case
    every finite dimensional representation is a direct sum of simple
    representations, here every representation has a filtration with simple
    subquotients.
        \begin{lemma}[Jordan-H\"older]\label{ref:JordanHolder:lem}
            Let $V$ be a finite dimensional $\Group$-representation. Then there exists
            a filtration $0 = V_0 \subset V_1 \subset \ldots \subset V_{r+1} = V$
            with simple subquotients $E_i = V_{i+1}/V_{i}$. The set
            $\{E_0, \ldots ,E_r\}$ does not depend on the filtration chosen.
            The representations $E_0, \ldots ,E_r$ are called the \emph{composition
            factors} of $V$.
        \end{lemma}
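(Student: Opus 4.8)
The plan is to recognize this as the classical Jordan--H\"older theorem and to check that its hypotheses are met in the present abelian category. First I would record that finite-dimensional rational $\Group$-representations form an abelian category: the kernel and image of a $\Group$-equivariant linear map are $\Group$-subrepresentations, and quotients inherit the action. Every object has finite length, since any strictly increasing chain of subrepresentations $0 = V_0 \subsetneq V_1 \subsetneq \cdots \subsetneq V_m$ satisfies $m \le \dim V$; in particular both the ascending and descending chain conditions hold, and maximal proper subrepresentations exist whenever $V \ne 0$.

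For existence I would induct on $\dim V$. If $V = 0$ the empty filtration works, and if $V$ is simple we take $0 \subset V$. Otherwise finite-dimensionality furnishes a maximal proper subrepresentation $V_r \subsetneq V$, so that $V/V_r$ is simple; applying the inductive hypothesis to $V_r$ and appending $V$ produces a composition series.

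The real content is the invariance of the multiset of factors, which I would prove by the standard comparison argument, again inducting on $\dim V$. Given two composition series with second-from-top terms $V_r$ and $W_s$ (both maximal proper subrepresentations), there are two cases. If $V_r = W_s$, I apply the inductive hypothesis to $V_r$ and are done. If $V_r \ne W_s$, then by maximality $V_r + W_s = V$, and the second isomorphism theorem, valid in any abelian category, gives $V/V_r \cong W_s/(V_r \cap W_s)$ and $V/W_s \cong V_r/(V_r \cap W_s)$, both simple. Fixing a composition series of $K := V_r \cap W_s$, I assemble two auxiliary composition series of $V$ passing through $K$ (one through $V_r$, one through $W_s$), compare each against the corresponding given series via the inductive hypothesis on $V_r$ and on $W_s$, and read off that every one of the four series yields the multiset consisting of the factors of $K$ together with $V/V_r$ and $V/W_s$. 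Hence the two given series have the same factors, completing the induction.

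I do not expect a genuine obstacle: the argument is purely formal once the abelian, finite-length structure is in place. The only point meriting a word of care is the applicability of the isomorphism theorems, which hold precisely because we work inside the abelian category of $\Group$-representations rather than with underlying vector spaces; this is exactly what renders the equivariant statement no harder than the classical one.
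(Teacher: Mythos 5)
Your proof is correct and is essentially the paper's approach: the paper's entire proof is a one-line citation of the classical Jordan--H\"older theorem, and what you have written is precisely the standard proof of that theorem, together with the (routine but worth stating) verification that finite-dimensional rational $\Group$-representations form a finite-length abelian category so that the classical argument applies.
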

        \begin{proof}
            This follows from the Jordan-H\"older theorem.
        \end{proof}
        In the linearly reductive case, for a simple representation $\lambda$
        and a finite dimensional representation $V$, one has the isotypic
        component $V[\lambda] \subset V$ associated
        to $\lambda$: the largest
        subrepresentation that is a direct sum of $\lambda$'s.
        In our situation, for a set of simple $\Group$-representations $\lambda = \{\lambda_1, \ldots
        ,\lambda_r\}$ and a representation $V$ we define $V[\lambda]
        \subset V$ to be the largest subrepresentation whose composition
        factors belong to $\{\lambda_1, \ldots ,\lambda_r\}$.
        To put this idea on a solid footing we use
        Serre subcategories.

        \begin{definition}
            Let $C$ be an abelian category. A
            \emph{Serre subcategory} $S$ is a full subcategory closed under direct
            sums and such that for any short exact sequence
            \[
                0\to V_1 \to V_2\to V_3\to 0
            \]
            we have $V_2\in S$ if and only if $V_1, V_3\in S$.
        \end{definition}

        \begin{lemma}\label{ref:Serresubobject:deflem}
            Let $S \subset C$ be a Serre subcategory and $V\in C$ be an
            object. Then there exists a unique largest
            subobject of $V$ that
            lies in $S$. We denote it by $V[S]$.
            For a morphism $f\colon V_1\to V_2$ in $C$ the image $f(V_1[S])$ lies
            in $V_2[S]$, so we get an induced morphism $f[S]\colon V[S] \to
            W[S]$.
            For an exact
            sequence
            \[
                \begin{tikzcd}
                    0 \ar[r] & V_1\ar[r] & V_2\ar[r] & V_3
                \end{tikzcd}
            \]
            we get an exact sequence $0\to V_1[S] \to V_2[S] \to
            V_3[S]$, so the functor $(-)[S]$ is left exact.
        \end{lemma}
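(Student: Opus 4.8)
The plan is to derive all three assertions from the two defining closure properties of a Serre subcategory, namely closure under direct sums and the two-out-of-three property for short exact sequences. First I would record two immediate consequences of the latter that get used throughout: applying it with $V_2\in S$ shows that $S$ is closed under passing to subobjects and to quotients. In particular, for any morphism the image $f(U)$ of an object $U\in S$ lies in $S$, being a quotient of $U$; and any subobject of an object of $S$ lies in $S$.

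For the existence of $V[S]$ I would construct it directly as a join rather than by an abstract maximality argument. Let $\Sigma$ be the family of all subobjects $U\subseteq V$ with $U\in S$; it is nonempty as $0\in\Sigma$. In the ambient categories we actually use (Grothendieck categories of representations and of equivariant sheaves) $\Sigma$ is a set and arbitrary direct sums exist, so I can form the canonical morphism $\bigoplus_{U\in\Sigma}U\to V$ and define $V[S]$ to be its image. The source lies in $S$ by closure under direct sums, so $V[S]$ lies in $S$ as a quotient of it; since $V[S]$ contains every $U\in\Sigma$, it is the largest subobject of $V$ lying in $S$, and uniqueness of a largest element is automatic. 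I note that finite direct sums are already forced by the extension axiom (as $U_1\oplus U_2$ is an extension of $U_2$ by $U_1$), so the separately stated closure under direct sums is precisely what powers this join construction.

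For functoriality, given $f\colon V_1\to V_2$ I would observe that $f(V_1[S])$ is a quotient of $V_1[S]\in S$, hence lies in $S$, and is a subobject of $V_2$; by maximality of $V_2[S]$ it is contained in $V_2[S]$. Thus $f$ restricts to $f[S]\colon V_1[S]\to V_2[S]$. For left exactness I would write the given sequence as $0\to V_1\overset{\iota}{\to}V_2\overset{p}{\to}V_3$, so $\iota$ is monic and $\operatorname{im}\iota=\ker p$. Since $\iota[S]$ is a restriction of the monomorphism $\iota$ it is again monic, giving exactness at $V_1[S]$. The crux is exactness at $V_2[S]$, which I would reduce to the identity $V_1\cap V_2[S]=V_1[S]$ of subobjects of $V_2$ (viewing $V_1\subseteq V_2$ via $\iota$): one inclusion is functoriality ($V_1[S]\subseteq V_1$ and $V_1[S]\subseteq V_2[S]$), while for the reverse $V_1\cap V_2[S]$ is a subobject of $V_2[S]\in S$, hence in $S$, and a subobject of $V_1$, so maximality forces it into $V_1[S]$. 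Granting this, $\ker(p[S])=\ker\bigl(p|_{V_2[S]}\bigr)=V_2[S]\cap\ker p=V_2[S]\cap V_1=V_1[S]=\operatorname{im}(\iota[S])$, which is exactness at $V_2[S]$.

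The one genuine point of care is the existence step: it uses that the category is well-powered and admits arbitrary direct sums, so the join of all $S$-subobjects is realized as the image of an actual coproduct. This is automatic in the setting of this paper, but in a general small abelian category one would instead build $V[S]$ as a directed union of finite-dimensional $S$-subobjects. Once existence is in hand, functoriality and left exactness are formal diagram chases and present no real obstacle.
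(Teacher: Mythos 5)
Your proposal is correct and follows essentially the same route as the paper: $V[S]$ is constructed as the sum (image of the direct sum) of all $S$-subobjects of $V$, functoriality comes from $S$ being closed under quotients plus maximality, and left exactness comes from observing that $\ker(V_2[S]\to V_3[S])$ is an $S$-subobject of $V_2[S]$ contained in $V_1$, hence lies in $V_1[S]$. Your extra remarks (well-poweredness for the join, and that the identity $V_1\cap V_2[S]=V_1[S]$ packages the exactness step) are sound refinements of the same argument rather than a different one.
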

        \begin{proof}
            Consider the family $\{W_i\}$ of all the subobjects of $V$
            that lie in $S$. Then $\bigoplus W_i$ lies in
            $S$, hence its quotient $\sum W_i \subset V$ lies in
            $S$. Take $V[S] := \sum W_i$.  For $f\colon V_1\to V_2$ the object
            $f(V_1[S])$ is a quotient of $V_1[S]$, hence $f(V_1[S])\in S$, so
            $f(V_1[S]) \subset V_2[S]$ by definition of $V_2[S]$.
            Finally, $\ker(V_2[S] \to V_3[S])$ is in $S$ as a subobject of
            $V_2[S]$ and is a subrepresentation of $V_1$ since the sequence is exact. Thus
            $\ker(V_2[S]\to V_3[S]) \subset V_1[S]$ which proves exactness.
        \end{proof}
        It is in general not true, even in our setup, that
        $(-)[S]$ is right exact.

        In the following, we will use the two main examples of Serre
        subcategories.
        \begin{example}
            Fix a set $\lambda = \{\lambda_1, \ldots ,\lambda_r\}$ of simple
            $\Group$-representations and consider the full
            subcategory of $\RepG$ consisting of representations $V$ that are
            unions of finite dimensional subrepresentations with composition
            factors in $\lambda$. This is a Serre category, which we
            denote by $\RepGlambda$ and call the \emph{Serre category generated
            by $\lambda$}. We denote by $(-)[\lambda]$ the associated functor.
        \end{example}
        For a $\Group$-representation $V$ the inclusion of sets induces a
        partial order on the set of all $\lambda$'s which
        makes the set $\{V[\lambda]\}_{\lambda}$ a direct system and $V$
        its colimit.
        \begin{remark}\label{ref:strongRationality:remark}
            For a representation $V$, the subspace $V[\lambda]$ is a
            subrepresentation, in particular it is rational. But also a stronger ``global''
            rationality condition is satisfied: there exists a finite
            dimensional $\kk$-linear subspace $W \subset H^0(\Group, \OO_{\Group})$
            such that the coaction map for any $V[\lambda]$ has image in $W
            \tensor V[\lambda] \subset H^0(\Group,
            \OO_{\Group})\tensor_{\kk} V[\lambda]$. Indeed, let $W$ be the
            minimal subspace such that the coactions of all (finitely many!)
            simple representations from $\lambda$ have image in
            $W\tensor(-)$. Then $W$ satisfies the assumptions.
        \end{remark}

        \begin{example}
            Let $T \subset Z(\Group)$ be a central torus (which means that
            $T_{\kkbar}  \simeq \mathbb{G}_{m,\kkbar}^{m}$ for some $m$, in
            particular $T$ is linearly reductive).
            Let $\Chi$ be a finite set of simple $T$-representations
            and consider the full subcategory of $\RepG$ consisting of
            $\Group$-representations $V$ which as a $T$-representation
            are direct sums of elements of $\Chi$. This is a Serre category that we denote by
            $\RepG[\Chi]$. We denote by $(-)[\Chi]$ the associated functor.
        \end{example}
        \begin{remark}\label{ref:splittingForGmult:remark}
            Since $T$ is central, for every its simple representation $\chi$
            and every $\Group$-representation $V$ the isotypic component
            $V[\chi]$ is a $\Group$-subrepresentation, therefore so is $V[\Chi] =
            \bigoplus_{\chi\in \Chi} V[\chi]$. (we see here a clash of
            notation between isotypic components and Serre categories;
            fortunately both notations agree). The
            functor $(-)[\Chi]$ is exact.
        \end{remark}
            The constructions $(-)[\lambda]$ and $(-)[\Chi]$ are connected as
            follows. For $\lambda$, let $\Chi(\lambda)$ be
            the set of all $T$-weights that appear in $\bigoplus_{\lambda_i\in
            \lambda} \lambda_i$. For a $\Group$-representation $V$, we
            have $V[\lambda] \subset
            V[\Chi(\lambda)]$. On the other hand, if $w\in V[\Chi(\lambda)]$, then we may form
            $\mu(w) = \left\{ \mu_1, \ldots ,\mu_s \right\}$ where $\mu_i$ are
            composition factors of the representation generated by $w$. Since
            $\Group\cdot w \subset V[\Chi(\lambda)]$, we have
            $\Chi(\mu(w))\subset \Chi(\lambda)$. By definition $w$ lies in
            $V[\mu(w)]$,
            therefore we obtain
            \begin{equation}\label{eq:ChiFilledByMu}
                V[\Chi] = \bigcup_{\mu\colon \Chi(\mu) \subset \Chi} V[\mu].
            \end{equation}

            \subsection{$\Group$-equivariant algebraizations}\label{ssec:GequivAlg}

    In this subsection we fix a torus $T \subset
    Z(\Group)$, not necessarily split. We assume that the Kempf torus
    $(\Gmult)_{\kkbar}\to \Group$ factors through $T$. Below we use the
    convention that application of $(-)[\lambda]$ precedes taking the limit,
    so that $\lim_n M_n[\lambda] := \lim_n (M_n[\lambda])$ and similarly for
    $\Chi$.

    \begin{definition}
    Let $(M_n)$ be a formal abelian group with a $\Group$-action. Its
        \emph{$\lambda$-algebraization} is
        \[
            \lim_{\Group}(M_n) := \colim_{\lambda} \lim_{n} M_n[\lambda],
        \]
    \end{definition}
    By Remark~\ref{ref:strongRationality:remark}, $\lim_{\Group}(M_n)$ is a
    $\Group$-representation. It is
    the limit of the diagram $ \ldots \onto M_{n+1}\onto M_n\onto  \ldots
    \onto M_0$ in the category of abelian groups with $\Group$-action; hence the notation.
    If $\AA = (A_n)_n$ is a formal algebra, then $\lim_{\Group}(\AA)$ is an
    algebra as well. Indeed for $\lambda$ and $\mu$ let $\lambda\tensor \mu$
    denote the Serre subcategory generated by composition factors of all
    $\{\lambda_i \tensor \mu_j\}_{\lambda_i\in \lambda, \mu_j\in
    \mu}$. Then the multiplication on an $A_n$ restricts to
    $A_n[\lambda]\tensor A_n[\mu]\to A_{n}[\lambda \tensor \mu]$ and induces
    multiplication $(\lim_{n} A_n[\lambda]) \tensor (\lim_n A_n[\mu]) \to \lim_n
    A_n[\lambda\tensor \mu]$ and thus a multiplication on
    $\lim_{\Group}(\AA)$.
    \begin{remark}
        The reader might wonder what is the connection between algebraizations
        from Definition~\ref{ref:formalization:def} and
        $\lambda$-algebraizations. We will see in
        Theorem~\ref{ref:existenceOfAlgebraization:thm} that in favorable
        conditions a $\lambda$-algebraization is an algebraization.
    \end{remark}

    \begin{definition}
    Let $(M_n)$ be a formal abelian group with a $T$-action. Its
        \emph{$\Chi$-algebraization} is
        \[
            \lim_{T}(M_n) := \colim_{\Chi} \lim_{n} M_n[\Chi],
        \]
        where the colimit is taken over all finitely generated Serre
        subcategories of $\Rep_T$.
    \end{definition}

        \begin{lemma}\label{ref:leftExactnessOfAlgebraization:lem}
            Let $0\to \cM\to \cN\to \cP$ be an exact sequence of formal
            abelian groups with $\Group$-action (resp.~$T$-action). Then the
            induced sequence $0\to \lim_{\Group}(\cM) \to
            \lim_{\Group}(\cN)\to \lim_{\Group}(\cP)$ is exact (resp., the
            induced sequence of $\lim_{T}(-)$ is exact).
        \end{lemma}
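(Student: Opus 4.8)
The plan is to realize the functor $\lim_{\Group}(-) = \colim_{\lambda}\lim_n (-)_n[\lambda]$ as a composite of three elementary functors, each of which interacts controllably with the given left-exact sequence $0\to \cM\to \cN\to \cP$, and then to treat the $T$-equivariant statement verbatim with $\Chi$ in place of $\lambda$.

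First I would fix a generating set $\lambda$ (equivalently, a finitely generated Serre subcategory $S$) and apply $(-)[\lambda]$ levelwise in $n$. For each $n$ the hypothesis supplies an exact sequence $0\to M_n\to N_n\to P_n$ of $\Group$-representations, and Lemma~\ref{ref:Serresubobject:deflem} tells us that $(-)[S]$ is left exact; hence $0\to M_n[\lambda]\to N_n[\lambda]\to P_n[\lambda]$ is exact. By the functoriality of $(-)[S]$ recorded in the same lemma, these sequences are compatible with the structure maps $\pi_n$, so they assemble into an exact sequence of towers of abelian groups indexed by $n$. The induced transition maps $N_{n+1}[\lambda]\to N_n[\lambda]$ need not be surjective, but this is harmless, since the inverse limit of a tower is defined regardless of surjectivity.

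Next I would pass to the inverse limit over $n$. Inverse limits of abelian groups are left exact, so for every fixed $\lambda$ the sequence
\[
    0 \to \lim_n M_n[\lambda] \to \lim_n N_n[\lambda] \to \lim_n P_n[\lambda]
\]
is exact. Finally I would take the colimit over $\lambda$. The collection of all $\lambda$ is directed under inclusion, because the union of two finite sets of simple representations is again a finite set of simple representations; thus $\colim_{\lambda}$ is a filtered colimit, and filtered colimits in $\Ab$ are exact. Applying $\colim_\lambda$ to the displayed sequence therefore yields exactness of $0\to \lim_{\Group}(\cM)\to \lim_{\Group}(\cN)\to \lim_{\Group}(\cP)$, which is the claim.

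The $T$-equivariant assertion follows by the identical three-step argument with $\Chi$ replacing $\lambda$: here $(-)[\Chi]$ is in fact exact by Remark~\ref{ref:splittingForGmult:remark}, the colimit ranges over the directed family of finitely generated Serre subcategories of $\Rep_T$, and the only step at which exactness degrades to mere left exactness is the inverse limit over $n$. This asymmetry is exactly the point to watch: because $\lim_n$ is not right exact, one cannot prolong the sequence by a zero on the right, and it is precisely the directedness of the index set $\{\lambda\}$ (which makes $\colim_\lambda$ exact) that allows left exactness to propagate through the entire composite.
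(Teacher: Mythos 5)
Your proposal is correct and follows essentially the same route as the paper's own (very condensed) proof, which likewise combines left exactness of $(-)[\lambda]$ and $(-)[\Chi]$ from Lemma~\ref{ref:Serresubobject:deflem}, left exactness of the inverse limit over $n$, and exactness of the filtered colimit over the directed family of $\lambda$'s (resp.\ $\Chi$'s). The only difference is presentational: you spell out the three-step factorization and the harmlessness of non-surjective transition maps on $[\lambda]$-parts, which the paper leaves implicit.
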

        \begin{proof}
            This follows from left exactness of $(-)[\Chi]$ and
            $(-)[\lambda]$ and the fact that colimits in both algebraizations
            are taken over filtered sets.
        \end{proof}

    Since $\Rep_T$ is semisimple we have a canonical isomorphism
    \[
        \lim_{T}(M_n) = \bigoplus_{\chi} \lim_{n}M_n[\chi],
    \]
    where $\chi$ runs through all simple
    $T$-representations. It follows that
    \begin{equation}\label{ref:ChiOnChiAlgebraization:eq}
        (\lim_T(M_n))[\Chi] = \lim_{n} M_n[\Chi]
    \end{equation}
    for every set of simple $T$-representation $\Chi$.
    The $T$-module $\lim_{T}(M_n)$ is the limit of $ \ldots \onto M_{n+1}\onto M_n\onto  \ldots
    \onto M_0$ in the category of abelian groups with $T$-action.
    For a formal abelian group $(M_n)$ with $\Group$-action, the restriction $\RepG\into
    \Rep_{T}$ induces a injective \emph{$\lambdachi$-comparison map}:
    \begin{equation}\label{eq:lambdachi}
        \lim_{\Group}(M_n)\into \lim_{T}(M_n)
    \end{equation}
    that embeds $M_n[\lambda]$ into $M_n[\Chi(\lambda)]$ and so $\lim_n
    M_n[\lambda]$ into $\lim_n M_n[\Chi(\lambda)]$.
    The reason to introduce both algebraizations is that
    $\lambda$-algebraization is more canonical --- it comes with a
    $\Group$-action ---  but also more challenging to
    work with, mostly
    because $[-](\lambda)$ is not right-exact. We
    will prove that the $\lambdachi$-comparison map is an isomorphism in
    situations of interest, see Corollary~\ref{ref:algebraizationsAgree:cor}.
    For this we employ
    various stabilization results that we now prove.

    \subsection{Stabilization}\label{ssec:stabilization}
    We keep the setup from the previous subsection
    (Subsection~\ref{ssec:GequivAlg}).

    We say that a formal abelian group $(M_n)$ with a $\Group$-action
    \emph{stabilizes $\Group$-equivariantly} if for every $\lambda$ the map
    $M_{n+1}[\lambda]\to M_{n}[\lambda]$ is an isomorphism for all $n$ large
    enough. Similarly, we say that a formal abelian group $(M_n)$ with a
    $T$-action \emph{stabilizes $T$-equivariantly} if for every
    $\Chi$ the map $M_{n+1}[\Chi]\to M_{n}[\Chi]$ is an isomorphism for all
    $n$ large enough. If $(M_n)$ has a $\Group$-action and stabilizes $T$-equivariantly then it also
    stabilizes $\Group$-equivariantly. Indeed, for every $\lambda$ the map
    $M_{n+1}[\Chi(\lambda)]\to M_{n}[\Chi(\lambda)]$ is an isomorphism for
    $n\gg 0$ and so using~\eqref{eq:ChiFilledByMu} we deduce that also the map
    \[
        M_{n+1}[\lambda] = M_{n+1}[\Chi(\lambda)][\lambda]\to
        M_{n}[\Chi(\lambda)][\lambda] = M_n[\lambda]
    \]
    is an isomorphism.
    We now give an instance where those stabilizations hold.
    \begin{definition}[standard adic formal algebra]\label{ref:standardadic:def}
        We say that a formal $\Gbar$-algebra $\AA$ is a \emph{standard
        adic formal $\Gbar$-algebra} if all of the following hold
        \begin{enumerate}
            \item $\AA$ is an adic algebra (as in
                Definition~\ref{ref:adicAndAlgebraization:def}),
            \item $\AA$ is an $A_0$-algebra (which means that every $A_n$ is
                an $A_0$-algebra and $A_n\to A_{n-1}$ are
                surjections of $A_0$-algebras),
            \item\label{it:standardadicinvariants} $\Spec(A_n)^{\Group} \to \Spec(A_0)$ is an isomorphism for every $n$.
        \end{enumerate}
    \end{definition}
    A formal algebra obtained from a $\Gbar$-scheme $X = \Spec(A)$ with $I =
    I(X^{\Group})$ is standard adic. We will see in
    Section~\ref{ssec:algebraization_geometric} that under good
    conditions standard adic algebras come from $\Gbar$-schemes.

    Let $\MM = (M_n)_n$ be a module over a standard adic algebra $(A_n)_n$. If the module $\MM$ is
    equipped with a $T$-action then we say that $\MM$ is
    \emph{grounded} if there exists a finite set of $T$-characters $\Chi$ such
    that $M_0 = M_0[\Chi]$.
    By Definition~\ref{ref:standardadic:def}\eqref{it:standardadicinvariants}
    for $n=0$ the $\Group$-action on $A_0$ is trivial, so
    the module $\MM$ is grounded whenever $M_0$ is a finitely generated
    $A_0$-module.
    \begin{example}\label{ex:groundness}
        Let $T$ be $\Gmult$ and let $\AA = (\kk[t]/t^n)_n$ be equipped with a $T$-action coming
        from the standard grading. The
        $\AA$-modules $\MM^1 = (\kk[t]/t^n\oplus (\kk[t]/t^n)t^{-1})_n$ and
        $\MM^2 =
        (\bigoplus_{i\in \mathbb{N}}(\kk[t]/t^n)t^{-1})_n$ are grounded, while
        $\MM^3 = \left( \bigoplus_{i\in \mathbb{N}}
        (\kk[t]/t^n)t^{-i}\right)_n$ is not grounded.
        The difference is that $\MM^3$ has generators of arbitrarily negative
        weights, while the generators $\MM^2$ are infinite, but their weights
        are bounded from below.
    \end{example}

    \begin{definition}[stabilization indices]
        For a simple $T$-representation $\chi$, we define $n_{\chi}\in
        \mathbb{Z}$ as the
        maximal weight of the Kempf torus in $\chi_{\kkbar}$. We set
        $n_{\Chi} = \max(n_{\chi}\ :\ \chi\in \Chi)$ and $n_{\lambda} :=
        n_{\Chi(\lambda)}$.
    \end{definition}
    For example, if $\chi$ is a trivial $T$-representation then $n_{\chi} = 0$.

        \begin{lemma}[$T$-equivariant stabilization
            lemma]\label{ref:stabilizationAlgebraically:lem}
            Let $\AA = (A_n)$ be a standard adic formal $\Gbar$-algebra. Let $\MM =
            (M_n)$ be an adic $\AA$-module with a $T$-action. Assume that
            $\MM$ is grounded. (For example, this is the case when $M_0$ is a
            finitely generated $A_0$-module.)

            \noindent Fix a finite set of characters $\Chi$. Then there exists a natural number $n_{\Chi, \MM}$ such that
            for
            every $n > n_{\Chi, M}$ the surjection
            \[
                M_{n}[\Chi] \onto M_{n-1}[\Chi]
            \]
            is an isomorphism. If $\MM = \AA$ then we may take $n_{\Chi,\MM} =
            n_{\Chi}$.
        \end{lemma}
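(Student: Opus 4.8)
The plan is to base change to $\kkbar$ and turn the whole statement into a weight count for the Kempf-torus grading. Since a map of $\kk$-modules is an isomorphism if and only if it is one after $\otimes_{\kk}\kkbar$, and since $(-)[\Chi]$ is compatible with this flat base change (the $\Chi$-isotypic part of a $T$-module becomes the isotypic part for the finitely many $\kkbar$-weights occurring in $\Chi$), it suffices to prove that $M_{n,\kkbar}[\Chi]\to M_{n-1,\kkbar}[\Chi]$ is an isomorphism for $n$ large. Over $\kkbar$ each $A_{n,\kkbar}$ carries the $\mathbb{N}$-grading coming from the Kempf torus, and $M_{n,\kkbar}$ is a graded module because the $T$-action, hence the Kempf action, is compatible with multiplication. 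Writing $J_n:=\ker(A_n\to A_0)$, I first record that by Corollary~\ref{ref:gradingPositive:cor} together with Definition~\ref{ref:standardadic:def}\eqref{it:standardadicinvariants} one has $(J_n)_{\kkbar}=(A_{n,\kkbar})_{>0}$, so $J_n$ is concentrated in Kempf-weights $\geq 1$.

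Next I identify the kernel of the transition map and bound its weights from below. By the adic condition of Definition~\ref{ref:adicAndAlgebraization:def} the kernel of $M_n\to M_{n-1}$ equals $\ker(A_n\to A_{n-1})M_n=J_n^{\,n}M_n$ (and $J_n^{\,n+1}=0$ in $A_n$). As $J_n$ sits in Kempf-weights $\geq 1$, the submodule $J_n^{\,n}M_{n,\kkbar}$ is concentrated in Kempf-weights $\geq n+w$, where $w$ is any lower bound for the Kempf-weights occurring in $M_{n,\kkbar}$. Producing such a $w$ uniformly in $n$ is the heart of the matter, and this is where groundedness is used: since $M_0=M_0[\Chi_0]$ for a finite set $\Chi_0$, the module $M_{0,\kkbar}$ has Kempf-weights bounded below by some integer $w$. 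I then propagate this bound to all $n$ via the $J_n$-adic filtration: each graded piece $J_n^{\,k}M_n/J_n^{\,k+1}M_n$ is a quotient of $(J_n^{\,k}/J_n^{\,k+1})\otimes_{A_0}M_0$, hence has Kempf-weights $\geq k+w\geq w$, and because the filtration is by graded submodules the weights of $M_{n,\kkbar}$ are exactly the union of the weights of these pieces.

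Finally I compare with the $\Chi$-side. By the definition of $n_{\chi}$ as the maximal Kempf-weight in $\chi_{\kkbar}$, the $\Chi$-isotypic part of any $T$-module has Kempf-weights $\leq n_{\Chi}$. Because $T$ is a torus the functor $(-)[\Chi]$ is exact (Remark~\ref{ref:splittingForGmult:remark}), so applying it to $0\to J_n^{\,n}M_n\to M_n\to M_{n-1}\to 0$ yields $\ker\!\big(M_n[\Chi]\to M_{n-1}[\Chi]\big)=(J_n^{\,n}M_n)[\Chi]$. This $\Chi$-part is simultaneously concentrated in Kempf-weights $\geq n+w$ and $\leq n_{\Chi}$, hence vanishes as soon as $n+w>n_{\Chi}$. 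Setting $n_{\Chi,\MM}:=n_{\Chi}-w$ therefore makes $M_n[\Chi]\to M_{n-1}[\Chi]$ an isomorphism for all $n>n_{\Chi,\MM}$. In the special case $\MM=\AA$ we have $M_0=A_0=(A_{n,\kkbar})_0$, concentrated in weight $0$, so one may take $w=0$ and recover $n_{\Chi,\AA}=n_{\Chi}$. The main obstacle is precisely the uniform lower weight bound on $M_{n,\kkbar}$ independent of $n$, which is what the groundedness hypothesis is engineered to supply; the remaining steps are weight bookkeeping together with exactness of $(-)[\Chi]$ and faithfully flat descent.
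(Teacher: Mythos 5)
Your proof is correct and takes essentially the same route as the paper's: pass to $\kkbar$, use Corollary~\ref{ref:gradingPositive:cor} to see that $\ker(A_n\to A_0)$ sits in positive Kempf weights, use groundedness to bound the Kempf weights of $M_n$ below uniformly in $n$, and conclude via exactness of $(-)[\Chi]$ that the kernel $\ker(A_n\to A_0)^n M_n$ lives in weights too large to meet the $\Chi$-isotypic part once $n$ exceeds $n_{\Chi}$ minus the lower bound. The only cosmetic difference is that you establish the uniform lower weight bound through the finite $J_n$-adic filtration with subquotients covered by $(J_n^k/J_n^{k+1})\otimes_{A_0}M_0$, where the paper invokes the graded Nakayama lemma --- the same underlying idea.
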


        \begin{proof}
            The claim is invariant under field extensions, so we pass to
            the algebraic closure of $\kk$. We also immediately reduce to the
            case $\Chi = \{\chi\}$.
            Let $m$ be the minimal weight of the Kempf torus on $M_0$; such a
            weight exists since the set of weights is finite because $\MM$ is
            grounded. We claim that $n_{\chi, \MM} = n_{\chi} + |m|$
            satisfies the assumptions.

            By Corollary~\ref{ref:gradingPositive:cor} for every $n$ the ideal
            $\ker(A_n\to A_0)$ consists entirely of positive weights. Since
            $\MM$ is adic, we have $M_0 = M_n/(\ker(A_n\to A_0)M_n)$. By the
            graded Nakayama lemma, the minimal weight of the Kempf torus on
            $M_n$ is equal to $m$.

            We have a short exact sequence
            \[
                0\to \ker(M_n\to M_{n-1})[\chi] \to M_n[\chi]\to
                M_{n-1}[\chi]\to 0
            \]
            so it is enough to prove that $\ker(M_n\to M_{n-1})[\chi] =
            0$ for all $n > n_{\chi, \MM}$.  Since $\AA$ and $\MM$ are adic,  we have
            $\ker(M_n\to M_{n-1}) = \ker(A_n\to A_{0})^nM_n$.
            If $n > n_{\chi}+|m|$ then the minimal weight of the
            Kempf torus in $\ker(A_n\to A_{0})^nM_n$ is greater than
            $n_{\chi} + |m| + m$ which in turn is greater than or equal $n_{\chi}$.
            The isotypic component $(\ker(A_n\to A_{0})^nM_n)[\chi]$ is
            thus equal to zero, as claimed.
            If $\MM = \AA$ then $m = 0$ and so $n_{\chi,\MM} = n_{\chi}$.
        \end{proof}
        As discussed at the beginning of this subsection, for a
        $\Group$-module $\MM$ its $T$-equivariant stabilization implies
        $\Group$-equivariant stabilization.
        \begin{corollary}\label{ref:algebraizationsAgree:cor}
            Let $\AA$ be a standard adic formal $\Gbar$-algebra and let $\MM$ be an
            adic $\Group$-module. Assume $\MM$ is grounded.
            Then the $\lambdachi$-comparison
            map~\eqref{eq:lambdachi} for $\MM$ is an
            isomorphism. Consequently, the map $\lim_{\Group}(M_n)\to M_n$ is
            surjective for every $n$.
        \end{corollary}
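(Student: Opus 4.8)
The plan is to deduce Corollary~\ref{ref:algebraizationsAgree:cor} directly from the $T$-equivariant stabilization lemma (Lemma~\ref{ref:stabilizationAlgebraically:lem}), using the already-noted implication that $T$-equivariant stabilization forces $\Group$-equivariant stabilization. First I would unwind what the $\lambdachi$-comparison map~\eqref{eq:lambdachi} does: for a fixed $\lambda$ it is the inclusion $\lim_n M_n[\lambda]\into \lim_n M_n[\Chi(\lambda)]$ coming from $M_n[\lambda]\subset M_n[\Chi(\lambda)]$, and after taking the colimit over $\lambda$ (resp.\ over $\Chi$) it becomes $\lim_{\Group}(M_n)\into \lim_T(M_n)$. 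So to prove it is an isomorphism it suffices to show surjectivity, i.e.\ that every element of $\lim_T(M_n)$ already lies in some $\lim_n M_n[\lambda]$.

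The key computation is that the stabilization lemma makes both limits stabilize. Since $\MM$ is grounded, Lemma~\ref{ref:stabilizationAlgebraically:lem} gives, for each finite $\Chi$, a threshold $n_{\Chi,\MM}$ beyond which $M_{n}[\Chi]\onto M_{n-1}[\Chi]$ is an isomorphism; hence the inverse limit $\lim_n M_n[\Chi]$ equals $M_N[\Chi]$ for $N = n_{\Chi,\MM}+1$, and in particular the projection $\lim_T(M_n)\to M_n[\Chi]$ is surjective for every $n$. As remarked just before the corollary, this $T$-stabilization yields the $\Group$-stabilization: for every $\lambda$ the maps $M_{n+1}[\lambda]\to M_n[\lambda]$ are isomorphisms for $n\gg 0$, using the identity $M_n[\lambda]=M_n[\Chi(\lambda)][\lambda]$ from~\eqref{eq:ChiFilledByMu} and exactness/left-exactness of the relevant functors. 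Consequently $\lim_n M_n[\lambda]$ also equals $M_N[\lambda]$ for $N$ large.

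With both sides identified with honest objects $M_N[\lambda]$ and $M_N[\Chi(\lambda)]$ at a common large level $N$, the comparison map becomes, for each $\lambda$, the genuine inclusion $M_N[\lambda]\into M_N[\Chi(\lambda)]$. Now I would invoke~\eqref{eq:ChiFilledByMu}, which states precisely that $M_N[\Chi]=\bigcup_{\mu:\Chi(\mu)\subset\Chi} M_N[\mu]$: every element $w$ of $M_N[\Chi(\lambda)]$ generates a subrepresentation whose composition factors $\mu(w)$ satisfy $\Chi(\mu(w))\subset\Chi(\lambda)$, so $w\in M_N[\mu(w)]$. Passing to the colimit over $\lambda$ on the source, this shows that every element of $\lim_T(M_n)=\colim_\Chi M_N[\Chi]$ lies in $\lim_n M_n[\mu]$ for a suitable finitely generated Serre subcategory $\mu$, hence in $\lim_\Group(M_n)$. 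Therefore the $\lambdachi$-comparison map is surjective, and being injective by construction, it is an isomorphism.

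Finally, for the last assertion I would combine this isomorphism with the structure of $\lim_T$. Since $\Rep_T$ is semisimple, $\lim_T(M_n)=\bigoplus_\chi \lim_n M_n[\chi]$, and by $T$-stabilization each $\lim_n M_n[\chi]$ surjects onto $M_n[\chi]$ for every fixed $n$; summing over $\chi$ gives that $\lim_T(M_n)\to M_n$ is surjective. Transporting through the isomorphism $\lim_\Group(M_n)\cong\lim_T(M_n)$ yields surjectivity of $\lim_\Group(M_n)\to M_n$ for every $n$, as claimed. The main obstacle I anticipate is bookkeeping the two colimits (over $\lambda$ and over $\Chi$) simultaneously and checking that the level $N$ at which everything stabilizes can be chosen uniformly enough for the colimit argument; the stabilization lemma is tailored to exactly this, so the difficulty is organizational rather than conceptual, and the non-right-exactness of $(-)[\lambda]$ is precisely what forces us to route through the exact $T$-side via~\eqref{eq:ChiFilledByMu}.
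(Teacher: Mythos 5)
Your proposal is correct and takes essentially the same route as the paper's proof: the paper likewise reduces to surjectivity, uses Lemma~\ref{ref:stabilizationAlgebraically:lem} to identify $\lim_n M_n[\Chi]$ with a finite level $M_{n'}[\Chi]$, and then lifts the element via the composition-factor argument --- the paper re-runs the proof of~\eqref{eq:ChiFilledByMu} inline (using centrality of $T$ to see that $M_{n'}[\Chi]$ is $\Group$-stable), whereas you cite that equation directly. The final claim is also handled identically, via exactness of $(-)[\Chi]$ combined with stabilization.
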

        \begin{proof}
            The comparison map is always injective, so it is enough to prove
            surjectivity.
            Let $m\in \lim_{T}(M_n)$ and choose $\Chi$ such that $m\in \lim_n
            M_n[\Chi]$. Let $n' := n_{\Chi, \MM}$. For $n> n'$ the map
            $M_{n}[\Chi]\to M_{n-1}[\Chi]$ is an isomorphism, so
            $\lim_n M_n[\Chi]\to M_{n'}[\Chi]$ is an isomorphism. Since
            $T$ is central in $\Group$, the subgroup $M_{n'}[\Chi]$
            is $\Group$-stable and so there exists a finite set of
            $\Group$-representations $\mu$ such that $m\in M_{n'}[\mu]$.
            But then $\Chi(\mu) \subset \Chi$ and by the discussion above,
            also $\lim_n M_n[\mu]\to M_{n'}[\mu]$ is an isomorphism, so
            $m$ lifts to an element of $\lim_n M_n[\mu]$
            and so $m$ is in the image of the $\lambdachi$-comparison map.
            The final claim follows because $\lim_{T}(M_n)\to M_n$ is
            surjective by right-exactness of $(-)[\Chi]$.
        \end{proof}
        \begin{theorem}[$T$-equivariant algebraization
            exists]\label{ref:existenceOfAlgebraization:thm}
            Let $\AA$ and $\MM$ be as in
            Lemma~\ref{ref:stabilizationAlgebraically:lem}.
            Let $M = \lim_{T} \MM$ and $A = \lim_{\Group} \AA$. Then $\ker(M\to M_n) =
            \ker(A\to A_0)^{n+1}M$ for all $n$. Therefore $\MM$
            admits an algebraization $(M, \ker(A\to A_0))$.
            In particular, $\AA$
            admits an algebraization $(\lim_{\Group}(\AA), \ker(A\to A_0))$.
        \end{theorem}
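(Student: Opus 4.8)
Write $I := \ker(A\to A_0)$ and $I_n := \ker(A_n\to A_0)$. The plan is to prove the two inclusions of $\ker(M\to M_n)=I^{n+1}M$ separately; the algebraization statements then follow formally, since the surjection $M\onto M_n$ will give $M/I^{n+1}M\cong M_n$ compatibly in $n$ (and, taking $\MM=\AA$, likewise $A/I^{n+1}\cong A_n$). Here I use repeatedly that $A\to A_n$ and $M\to M_n$ are surjective $T$-equivariant maps of modules: surjectivity follows from the right-exactness of $(-)[\Chi]$ exactly as in the proof of Corollary~\ref{ref:algebraizationsAgree:cor} (for $A=\lim_{\Group}\AA=\lim_{T}\AA$ one first invokes that the $\lambdachi$-comparison map is an isomorphism), and consequently $I\onto I_n$ is surjective, being the restriction of $A\onto A_n$ to the kernels of the maps to $A_0$.

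The inclusion $I^{n+1}M\subseteq\ker(M\to M_n)$ is the easy one. The map $M\to M_n$ is $A$-linear for the action of $A$ on $M_n$ through $A\to A_n$, and it carries $I$ into $I_n$, hence $I^{n+1}M$ into $I_n^{n+1}M_n$. Since $\AA$ is adic, the ``in particular'' of Definition~\ref{ref:adicAndAlgebraization:def} gives $I_n^{n+1}=\ker(A_n\to A_0)^{n+1}=0$, so $I_n^{n+1}M_n=0$ and the inclusion follows.

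The reverse inclusion is the heart of the matter, and the main obstacle will be to lift a kernel element back to $I^{n+1}M$ \emph{without introducing stray $T$-weights}. I would start from $x\in\ker(M\to M_n)$. Since $M=\lim_{T}\MM=\bigoplus_{\chi}\lim_{k}M_k[\chi]$ is a direct sum, $x$ is supported on a finite set of simple $T$-representations, i.e.\ $x\in M[\Chi]$ for some finite $\Chi$, and by~\eqref{ref:ChiOnChiAlgebraization:eq} one has $M[\Chi]=\lim_{k}M_k[\Chi]$. Choosing $N>\max(n,n_{\Chi,\MM})$, the $T$-equivariant stabilization Lemma~\ref{ref:stabilizationAlgebraically:lem} makes the projection $M[\Chi]\to M_N[\Chi]$ an isomorphism; write $\bar x\in M_N[\Chi]$ for the image of $x$. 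Exactness of $(-)[\Chi]$ (Remark~\ref{ref:splittingForGmult:remark}) combined with adicity of $\MM$ and $\AA$ then identifies $\bar x$ as an element of $\ker(M_N[\Chi]\to M_n[\Chi])=\big(\ker(M_N\to M_n)\big)[\Chi]=\big(I_N^{n+1}M_N\big)[\Chi]$.

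It remains to produce a weight-controlled lift. I would consider the $T$-equivariant multiplication $\Phi\colon I^{\otimes(n+1)}\tensor_{\kk}M\onto I^{n+1}M$ together with its truncation $\Phi_N\colon I_N^{\otimes(n+1)}\tensor_{\kk}M_N\onto I_N^{n+1}M_N$; these are intertwined by the surjective $T$-equivariant vertical maps induced by $I\onto I_N$ and $M\onto M_N$. Applying the exact functor $(-)[\Chi]$ preserves all these surjections, so I can lift $\bar x$ first through $\Phi_N[\Chi]$ and then through the vertical map to an element of $\big(I^{\otimes(n+1)}\tensor_{\kk}M\big)[\Chi]$, and set $y:=\Phi(\text{this lift})$. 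By $T$-equivariance $y\in(I^{n+1}M)[\Chi]\subseteq M[\Chi]$, and by commutativity of the square $y$ has the same image $\bar x$ in $M_N$ as $x$. Since $M[\Chi]\to M_N[\Chi]$ is injective, this forces $x=y\in I^{n+1}M$. The one delicate point—precisely what the homogeneity of the lift is designed to handle—is that an \emph{arbitrary} lift of $\bar x$ to $I^{n+1}M$ could acquire $T$-weights outside $\Chi$ and so fail to be comparable to $x$ under the stabilized isomorphism; carrying out the lift inside the $\Chi$-isotypic part, which is legitimate because $T$ is linearly reductive and hence $(-)[\Chi]$ is exact, is what removes this difficulty.
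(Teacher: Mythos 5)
Your proof is correct and follows essentially the same route as the paper's: both prove the easy inclusion via adicity of $\AA$, and both prove the reverse inclusion by fixing $\Chi$, stabilizing at a level $N > n_{\Chi,\MM}$ via Lemma~\ref{ref:stabilizationAlgebraically:lem}, producing a weight-controlled lift inside the $\Chi$-isotypic part using exactness of $(-)[\Chi]$, and concluding by injectivity of $M[\Chi]\to M_N[\Chi]$. The only cosmetic difference is that you package the lifting step through the tensor-product surjection $I^{\otimes(n+1)}\otimes_\kk M\onto I^{n+1}M$, whereas the paper lifts directly through the surjection $\bigl(I^{n+1}M\bigr)[\Chi]\onto\bigl(\ker(A_N\to A_0)^{n+1}M_N\bigr)[\Chi]$ obtained from surjectivity of $A\to A_N$ and $M\to M_N$.
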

        \begin{proof}
            Corollary~\ref{ref:algebraizationsAgree:cor} applies to $\AA$ and
            shows that $A = \lim_{T} \AA$.
            Let $\pi_n\colon A\to A_n$ be the canonical maps and let $I_n =
            \ker(\pi_n)$. Let $\pi_n^M\colon M\to M_n$ be the canonical maps.
            The maps $\pi_n^M$ and $\pi_n$ are
            surjective by exactness of $(-)[\Chi]$.
            This
            implies that
            \[
                \pi_n^M(I_0^{n+1}M) = \pi_n(I_0^{n+1})M_n = \ker(A_n\to
                A_0)^{n+1}M_n = 0
            \]
            because $\AA$ and $\MM$ are adic. We have proven $\ker(\pi_n^M)
            \supset I_0^{n+1}M$. It remains to prove the other
            containment.
            Take an element $m\in \ker(\pi_n^M)$ and fix $\Chi$ such that $m\in
            \lim_{n}M_n[\Chi]$.
            By $T$-stabilization for the module $\MM$, for any ${n'} >
            n_{\Chi, \MM}$ the map
            $\lim_{n} M_n[\Chi] \to M_{n'}[\Chi]$ is an isomorphism. This
            isomorphism maps the element
            $m$ to an element
            of $\ker(M_{n'}\to M_n)= \ker(A_{n'}\to
            A_0)^{n+1}M_{n'}$. Since $\pi_{n'}$ and $\pi_{n'}^M$ are surjective, we have
            \[
                \pi_{n'}^M(I_0^{n+1}M) = \pi_{n'}(I_0)^{n+1}\pi_{n'}(M) = \ker(A_{n'}\to
                A_0)^{n+1}M_{n'}.
            \]
            Since $(-)[\Chi]$ is right-exact, we see that
            $\pi_{{n'}}^M((I_{0}^{n+1}M)[\Chi])$ contains $m$. So we pick an element
            $j\in (I_0^{n+1}M)[\Chi]$ such that
            $\pi_{n'}^M(j) = \pi_{n'}^M(i)$. Then $\pi_{n'}^M(i-j) = 0$ and
            by~\eqref{ref:ChiOnChiAlgebraization:eq} we have $i-j\in
            \lim_{n} M_n[\Chi]$. But $\pi_{n'}^M$ restricted to $\lim_{n}
            M_n[\Chi]$ is an isomorphism, so $i - j = 0$.
        \end{proof}
        \begin{proposition}[generating sets for grounded
            modules]\label{ref:presentationsBounded:prop}
            Let $\AA$, $\MM$, $A$, $M$ be as in
            Theorem~\ref{ref:existenceOfAlgebraization:thm}.
            Let $F$ be an $A$-module with a $T$-linearization and a $T$-equivariant
            homomorphism of $A$-modules $p\colon F\to M$
            such that the associated map $p\colon F_0 \to M_0$ is surjective.
            Then $p$ is surjective.
        \end{proposition}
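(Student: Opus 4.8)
The plan is to deduce the claim from a graded Nakayama lemma. Since surjectivity of a module homomorphism is preserved and reflected by the faithfully flat base change $-\tensor_{\kk}\kkbar$, first I would pass to $\kkbar$ and use the Kempf torus $\Gmultkkbar\to T$ to grade everything in sight by $\ZZ$. By Corollary~\ref{ref:gradingPositive:cor}, applied at each level $A_n$ (where Definition~\ref{ref:standardadic:def}\eqref{it:standardadicinvariants} identifies the fixed-point ring with $A_0$) and then passed to the limit $A = \lim_{\Group}\AA = \lim_{T}\AA$, the algebra $A$ is $\NN$-graded with positive part equal to $I_0 := \Ker(A\to A_0)$. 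Since $p$ is $T$-equivariant, it is $\Gmultkkbar$-equivariant, so $M$ is a graded $A$-module and $p\colon F\to M$ is a homomorphism of graded modules.

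The essential structural input is that the Kempf weights occurring in $M$ are bounded below; this is exactly where the grounded hypothesis is used (contrast Example~\ref{ex:groundness}, where the non-grounded module $\MM^3$ has weights unbounded below and the argument would break). I would reuse the computation from the proof of Lemma~\ref{ref:stabilizationAlgebraically:lem}: letting $m$ be the minimal Kempf weight on $M_0$, which exists precisely because $\MM$ is grounded, the graded Nakayama lemma shows each $M_n$ again has minimal weight $m$. A nonzero element of $M = \lim_{T}\MM = \bigoplus_{\chi}\lim_n M_n[\chi]$ has nonzero image in some $M_n$, so every weight appearing in $M$ is at least $m$.

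Granting this, the proof is a one-line descent. Set $N := \Coker(p) = M/\Image(p)$. As $\Image(p)$ is a graded $A$-submodule, $N$ is a graded quotient of $M$ and hence has weights bounded below by $m$. Identifying $M_0 = M/I_0M$ via Theorem~\ref{ref:existenceOfAlgebraization:thm}, the surjectivity of $p\colon F_0\to M_0$ says exactly that $M = \Image(p) + I_0 M$, which on passing to $N$ reads $N = I_0 N = A_{>0}N$. If $N\neq 0$, choose its minimal weight $d$; then $(A_{>0}N)_d = \sum_{i>0} A_i N_{d-i} = 0$ since $N_{d-i} = 0$ for $i>0$, contradicting $N = A_{>0}N$. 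Hence $N = 0$ and $p$ is surjective.

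The main obstacle is genuinely the boundedness-below step rather than the Nakayama descent: the downward induction needs a lowest weight in the target, and this fails for general modules --- in particular it may fail for the source $F$, on which no groundedness is assumed. The role of the grounded hypothesis on $\MM$, funneled through the minimal-weight computation of Lemma~\ref{ref:stabilizationAlgebraically:lem}, is precisely to keep $M$, and therefore $\Coker(p)$, bounded below so that the induction terminates.
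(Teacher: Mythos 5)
Your proof is correct, but it takes a genuinely different route from the paper's. The paper stays over $\kk$ throughout this proof and never invokes the Kempf grading directly: it first proves surjectivity at every finite level, setting $G_n = p_n(F_n)$ and using the nilpotent Nakayama argument $M_n = G_n + IM_n = G_n + I^2M_n = \ldots = G_n + I^{n+1}M_n = G_n$; it then transfers surjectivity to the limit isotypically, since by exactness of $(-)[\Chi]$ and the Stabilization Lemma~\ref{ref:stabilizationAlgebraically:lem} one has $M[\Chi] \simeq M_n[\Chi]$ for $n \gg 0$ and $G[\Chi] \onto G_n[\Chi] = M_n[\Chi]$, forcing $G[\Chi] = M[\Chi]$ for every $\Chi$. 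Your argument instead passes to $\kkbar$, grades everything by the Kempf torus, and runs graded Nakayama on the cokernel $N = \Coker(p)$, using that the weights of $M$ are bounded below --- and you correctly identify this as the exact point where groundedness enters (the paper funnels the same hypothesis through the statement of the Stabilization Lemma, whose proof contains precisely your minimal-weight computation). What your packaging buys is a more direct proof that makes the role of groundedness transparent; what the paper's buys is that it avoids base change and avoids the one step you gloss over, namely the identification of $(A_{\kkbar})_{>0}$ with $\Ker(A\to A_0)_{\kkbar}$ for the limit algebra $A$. Corollary~\ref{ref:gradingPositive:cor} gives this level-wise, but to pass to $A = \lim_{T}(\AA) = \bigoplus_{\chi}\lim_n A_n[\chi]$ you also need that no nonzero element of $\Ker(A\to A_0)_{\kkbar}$ has weight $\leq 0$: such an element would map into $\ker(A_n\to A_0)_{\kkbar}$, which is positively graded at each level, hence would have zero image in every $(A_n)_{\kkbar}$, hence would be zero since $A$ injects componentwise into the level-wise limit. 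This is the same observation the paper makes inside the proof of Theorem~\ref{ref:algebraization:thm} (``the degree zero part of $A$ is indeed $A_0$''), so it is a fillable omission rather than a gap, but it should be stated.
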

        \begin{proof}
            Consider the map $p_n\colon F_n\to M_n$ which is just $p_n = p\tensor_A
            (A/I^{n+1})$. Let $G_n = p_n(F_n)$. Since $p_0$ is surjective and
            $\MM$ is adic, we have $M_n = G_n + IM_n$. But $I^{n+1}M_n = 0$ so
            \[
                M_n = G_n + IM_n= G_n + I(G_n + IM_n) = G_n + I^2M_n = \ldots
                = G_n + I^{n+1}M_n = G_n.
            \]
            Let $G = p(F) \subset M$. It is enough to prove that $G[\Chi] =
            M[\Chi]$ for all $\Chi$. By exactness, we have $M[\Chi] = \lim_n M_n[\Chi]$ for all $\Chi$.  By
            Stabilization Lemma~\ref{ref:stabilizationAlgebraically:lem}, the
            map $M[\Chi]\to M_n[\Chi]$ is an isomorphism for $n$ large enough
            and we obtain the following diagram
            \[
                \begin{tikzcd}
                    G[\Chi] \arrow[r, hook]\arrow[d, two heads] & M[\Chi]\arrow[d, "\simeq"]\\
                    G_n[\Chi] \arrow[r, "\simeq"] & M_n[\Chi]
                \end{tikzcd}
            \]
            so $G[\Chi]\to M[\Chi]$ is surjective as well as injective.
        \end{proof}
        \begin{corollary}[finitely generated
            modules]\label{ref:finitelygenerated:cor}
            Let $\AA$ be a standard adic formal $\Gbar$-algebra with an
            algebraization $(A, I)$ and let $\MM = (M_n)_n$ be an adic module over
            $\AA$ with a $T$-action.  Then, the following are equivalent
            \begin{enumerate}
                \item\label{it:fgone} the $A_0$-module $M_0$ is finitely generated,
                \item\label{it:fgtwo} every $A_n$-module $M_n$ is finitely generated,
                \item\label{it:fgthree} $\lim_{T}(\MM)$ is a finitely generated $A$-module.
            \end{enumerate}
            If these hold, we say that $\MM$ is a \emph{finitely generated}
            $\AA$-module.
        \end{corollary}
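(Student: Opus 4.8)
The plan is to prove the equivalences by first establishing the two implications between finite levels and then connecting them to the algebraization using the results already available. Since part~(2) specialized to $n=0$ is exactly part~(1), the implication (2)$\Rightarrow$(1) is immediate, so it suffices to prove (1)$\Rightarrow$(2), (1)$\Rightarrow$(3) and (3)$\Rightarrow$(1).

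For (1)$\Rightarrow$(2) I would argue by a nilpotent Nakayama lemma at each fixed level $n$, with no reference to the $T$-action. Write $I_{(n)} := \ker(A_n\to A_0)$. Because $\AA$ is adic, $I_{(n)}^{n+1} = \ker(A_n\to A_0)^{n+1} = 0$, so $I_{(n)}$ is nilpotent; and because $\MM$ is adic we have $M_n/I_{(n)}M_n = M_0$. Lifting finitely many $A_0$-generators of $M_0$ along the surjection $M_n\onto M_0$ and letting $G\subset M_n$ be the $A_n$-submodule they generate, the relation $M_n = G + I_{(n)}M_n$ iterates to $M_n = G + I_{(n)}^{n+1}M_n = G$; hence $M_n$ is finitely generated over $A_n$. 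This is the same finite-level bookkeeping that appears in the proof of Proposition~\ref{ref:presentationsBounded:prop}.

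For (3)$\Rightarrow$(1) the point is that the canonical map $M := \lim_T(\MM)\to M_0$ is surjective: writing $\lim_T(\MM) = \bigoplus_{\chi}\lim_n M_n[\chi]$ and using that each tower $(M_n[\chi])_n$ has surjective transition maps, the inverse limit surjects onto $M_0[\chi]$ for every $\chi$. Since the induced $A$-action on $M_0$ factors through $A\to A_0$, the images of a finite $A$-generating set of $M$ generate $M_0$ over $A_0$, which is part~(1). This needs neither groundedness nor Theorem~\ref{ref:existenceOfAlgebraization:thm}.

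The substantive implication is (1)$\Rightarrow$(3), and here the main obstacle is to produce a free presentation that is $T$-equivariant, so that Proposition~\ref{ref:presentationsBounded:prop} can be applied. Assuming $M_0$ is finitely generated over $A_0$, the module $\MM$ is grounded (as noted after Definition~\ref{ref:standardadic:def}, since the $\Group$-action on $A_0$ is trivial), so $M_0 = M_0[\Chi]$ for a finite set of characters $\Chi$ and we may choose the finitely many $A_0$-generators $m_1,\dots,m_r$ of $M_0$ to be $T$-homogeneous, say $m_i\in M_0[\chi_i]$. Using the surjectivity of $M[\chi_i] = \lim_n M_n[\chi_i]\to M_0[\chi_i]$ coming from~\eqref{ref:ChiOnChiAlgebraization:eq} together with the previous paragraph, I would lift each $m_i$ to a homogeneous element $\widetilde{m}_i\in M[\chi_i]$. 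Let $F = \bigoplus_{i=1}^r A\cdot e_i$ be the free $A$-module with $e_i$ assigned $T$-weight $\chi_i$, and let $p\colon F\to M$ send $e_i\mapsto \widetilde{m}_i$; then $p$ is a $T$-equivariant homomorphism of $A$-modules whose reduction $p_0\colon F_0\to M_0$ is surjective. Proposition~\ref{ref:presentationsBounded:prop} then forces $p$ itself to be surjective, so $M = \lim_T(\MM)$ is generated by $\widetilde{m}_1,\dots,\widetilde{m}_r$, which is part~(3). The delicate ingredient is precisely the homogeneity of the lifts, which relies on the semisimplicity of $\Rep_T$ and on identity~\eqref{ref:ChiOnChiAlgebraization:eq}; without it one could not build a $T$-linearized $F$ to feed into Proposition~\ref{ref:presentationsBounded:prop}.
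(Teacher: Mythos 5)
Your proof is correct in substance, and for the only substantive implication, (1)$\Rightarrow$(3), it follows the paper exactly: groundedness deduced from finite generation of $M_0$, a $T$-equivariant free cover of $M_0$ lifted to $\lim_T(\MM)$, and an appeal to Proposition~\ref{ref:presentationsBounded:prop}. Your handling of the easy implications is arranged differently --- the paper proves the single cycle (1)$\Rightarrow$(3)$\Rightarrow$(2)$\Rightarrow$(1), obtaining (3)$\Rightarrow$(2) from right-exactness of $(-)[\Chi]$ (surjectivity of $\lim_T(\MM)\to M_n$) together with $I^{n+1}M_n=0$, whereas you prove (1)$\Leftrightarrow$(2) and (1)$\Leftrightarrow$(3) separately --- but your nilpotent Nakayama argument and the Mittag--Leffler surjectivity of $M\to M_0$ are the same elementary ingredients, so this is only a reorganization, at the cost of one extra implication.

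One step does need repair: the torus $T$ here is \emph{not} assumed split (it is fixed in Subsection~\ref{ssec:GequivAlg} as a central torus, ``not necessarily split'', and in the main application it is the image of the Kempf torus, which can be non-split). For non-split $T$ a simple $T$-representation $\chi_i$ may have dimension greater than $1$; then a rank-one free summand $A\cdot e_i$ admits no $T$-linearization in which $e_i$ ``has weight $\chi_i$'', and the line $\kk\widetilde{m}_i$ spanned by your lift is not $T$-stable, so the assignment $e_i\mapsto\widetilde{m}_i$ cannot be made $T$-equivariant as written. The fix is exactly what the paper's one-sentence appeal to linear reductivity of $T$ encodes: let $V_0\subset M_0[\Chi]$ be the finite-dimensional $T$-subrepresentation generated by $m_1,\dots,m_r$ (finite-dimensional because $\MM$ is grounded and the representation is rational); since $\Rep_T$ is semisimple, the preimage of $V_0$ under the surjection $M[\Chi]\onto M_0[\Chi]$ splits as the kernel plus a subrepresentation $V\cong V_0$, and then $F:=A\tensor_\kk V$ with the diagonal $T$-action and $p(a\tensor v)=av$ satisfies the hypotheses of Proposition~\ref{ref:presentationsBounded:prop}. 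When $T$ is split, your argument is fine verbatim.
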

        \begin{proof}
            $\ref{it:fgthree}.\implies\ref{it:fgtwo}.$ Since $(-)[\Chi]$ is
            right-exact for every $\Chi$, the map $\lim_{T}(\MM)\to M_n$ is
            surjective so $M_n$ is a finitely generated $A$-module. But $\MM$ is
            adic, so $I^{n+1}$ annihilates $M_n$ and so $M_n$ is a finitely
            generated $A_n$-module.\\
            The implication $\ref{it:fgtwo}.\implies\ref{it:fgone}.$ is trivial
            and $\ref{it:fgone}.\implies \ref{it:fgthree}.$ follows from
            Proposition~\ref{ref:presentationsBounded:prop} since $\MM$ is
            grounded by assumption and $T$ is
            linearly reductive, so we can find a $T$-equivariant surjection
            from a free $A$-module $F$ with a $T$-linearization onto $M_0$ and its
            $T$-equivariant lift to $\lim_{T}(\MM)$.
        \end{proof}

        \newcommand{\Formalize}{\bd{Formalize}}%
        We are now ready to prove the main equivalence statement for grounded
        modules.
        Consider a standard adic formal $\Gbar$-algebra $\AA = (A_n)_n$ with
        algebraization $(A, I = \ker(A\to A_0))$ as in
        Theorem~\ref{ref:existenceOfAlgebraization:thm}, so that
        the algebra $\AA$ is a formalization of $A$. Consider the two
        functors
        \[
            \Alg := \lim_{\Group}(-)\colon \Mod_{\AA}^{\mathrm{adic}, \Group} \to
            \Mod_{A}^{\Group}\qquad\mbox{and}\qquad \Formalize\colon
            \Mod_A^{\Group}\to \Mod_{\AA}^{\mathrm{adic}, \Group},
        \]
        defined by $\Formalize(M) = (M/I^{n+1}M)_{n\in
            \mathbb{N}}$ and $\Alg(\MM) = \lim_{\Group}(\MM)$,
        where $\Mod_{A}^{\Group}$
        denotes the category of $A$-modules with a $\Group$-action and
        $\Mod_{\AA}^{\mathrm{adic}, \Group}$ of adic
        $\Group$-modules over $\AA$.
        \begin{theorem}[Algebraization for grounded $\Group$-modules]\label{ref:finitelyGeneratedAlgebraization:thm}
            The functors $\Alg$ and $\Formalize$ restrict to an equivalence
            between the category of finitely generated $A$-modules with a
            $\Group$-action and finitely generated adic $\AA$-modules (in the
            sense of Corollary~\ref{ref:finitelygenerated:cor}) with
            $\Group$-action.
        \end{theorem}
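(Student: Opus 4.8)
The plan is to exhibit $\Alg$ and $\Formalize$ as mutually quasi-inverse, so after checking that each functor preserves the relevant finiteness it suffices to produce natural isomorphisms $\Formalize\circ\Alg\cong\id$ and $\Alg\circ\Formalize\cong\id$. For well-definedness, given a finitely generated $A$-module $M$ with a $\Group$-action, the formal module $\Formalize(M)=(M/I^{n+1}M)_n$ is adic by the definition of $I=\ker(A\to A_0)$, carries a $\Group$-action since $I$ is $\Group$-stable, and has $M_0=M/IM$ finitely generated over $A_0$; hence it is grounded and finitely generated in the sense of Corollary~\ref{ref:finitelygenerated:cor}. Conversely, for a finitely generated adic $\AA$-module $\MM$ with a $\Group$-action, Remark~\ref{ref:strongRationality:remark} equips $\lim_{\Group}(\MM)$ with a $\Group$-action, and Corollary~\ref{ref:algebraizationsAgree:cor} (applicable since finite generation forces groundedness) identifies it with $\lim_{T}(\MM)$, which is finitely generated over $A$ by Corollary~\ref{ref:finitelygenerated:cor}.

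The isomorphism $\Formalize\circ\Alg\cong\id$ is read off from Theorem~\ref{ref:existenceOfAlgebraization:thm}. For finitely generated adic $\MM$, set $M=\Alg(\MM)=\lim_{\Group}(\MM)=\lim_{T}(\MM)$, the last equality by Corollary~\ref{ref:algebraizationsAgree:cor}. The theorem gives $\ker(M\to M_n)=I^{n+1}M$ for every $n$, so the canonical maps $M/I^{n+1}M\to M_n$ are isomorphisms compatible in $n$; this is precisely a natural isomorphism $\Formalize(\Alg(\MM))\cong\MM$.

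The substantive step is $\Alg\circ\Formalize\cong\id$, for which I would show that the natural map $\eta_M\colon M\to \Alg(\Formalize(M))$ is an isomorphism for every finitely generated $M$. Using Corollary~\ref{ref:algebraizationsAgree:cor} to rewrite the target as $\lim_{T}(\Formalize(M))=\bigoplus_{\chi}\lim_{n}(M/I^{n+1}M)[\chi]$ and the linear reductivity of $T$, it suffices to check $\eta_M$ on each $\chi$-isotypic piece. By the $T$-equivariant Stabilization Lemma~\ref{ref:stabilizationAlgebraically:lem} applied to the grounded adic module $\Formalize(M)$, for $N\gg 0$ one has $\lim_{n}(M/I^{n+1}M)[\chi]=(M/I^{N+1}M)[\chi]=M[\chi]/(I^{N+1}M)[\chi]$, and $\eta_M$ restricted to the $\chi$-component is the evident quotient map. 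Thus $\eta_M$ is an isomorphism on the $\chi$-component exactly when $(I^{N+1}M)[\chi]=0$, and here the weight estimate from the proof of Lemma~\ref{ref:stabilizationAlgebraically:lem} applies verbatim: by Corollary~\ref{ref:gradingPositive:cor} the ideal $I$ sits in strictly positive Kempf weights, while $M$ has Kempf weights bounded below (by finite generation and graded Nakayama, as $A$, like each $A_n$, is concentrated in non-negative Kempf weights). Hence $I^{N+1}M$ has minimal Kempf weight growing linearly in $N$; once it exceeds the maximal weight $n_{\chi}$ occurring in $\chi$, the component $(I^{N+1}M)[\chi]$ vanishes. This gives $\eta_M\cong\id$ componentwise, hence globally.

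I expect the last paragraph to be the main obstacle: everything reduces to the vanishing of $(I^{N+1}M)[\chi]$, which is not a formal completeness statement but rests on the positivity of $I$ and the lower bound on the Kempf weights of $M$. The remaining verifications --- that $\Alg$ respects the $A$-module and $\Group$-module structures, and that both natural isomorphisms are compatible with morphisms --- are routine consequences of Lemma~\ref{ref:leftExactnessOfAlgebraization:lem} and the explicit constructions of the algebraizations, and I would treat them briefly.
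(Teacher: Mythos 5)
Your proposal is correct and follows essentially the same route as the paper: identify the two algebraizations via Corollary~\ref{ref:algebraizationsAgree:cor}, use Corollary~\ref{ref:finitelygenerated:cor} for preservation of finite generation, get $\Formalize\circ\Alg\cong\id$ from Theorem~\ref{ref:existenceOfAlgebraization:thm}, and prove $\Alg\circ\Formalize\cong\id$ on $\chi$-isotypic components by the Stabilization Lemma together with the positivity of the Kempf weights of $I$. Your reduction of the last step to the vanishing of $(I^{N+1}M)[\chi]$ is exactly the paper's ``same argument as in Stabilization Lemma,'' just spelled out more explicitly.
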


        \begin{proof}
            Using Corollary~\ref{ref:algebraizationsAgree:cor} we identity
            $\lambda$- and $\Chi$-algebraizations of finitely generated
            $\AA$-modules.
            By Corollary~\ref{ref:finitelygenerated:cor} the functors $\Alg$
            and $\Formalize$ map finitely generated modules to finitely
            generated ones. It remains to prove that they give an equivalence.

            By Theorem~\ref{ref:existenceOfAlgebraization:thm}, the
            composition $\Formalize \circ \Alg$
            is isomorphic to identity.
            Consider $\Alg \circ \Formalize$. Choose an $A$-module $M$ with
            a $\Group$-linearization, let $\MM = \Formalize(M)$ and $M' =
            \Alg(\MM)$ and consider the natural map $M\to M'$. By
            Stabilization Lemma~\ref{ref:stabilizationAlgebraically:lem}, the map
            $M[\Chi]\to M'[\Chi]$ is surjective for every $\Chi$. Since every element of $M'$
            sits inside some $M'[\Chi]$, we get that $M\to M'$ is surjective. By the same argument as in
            Stabilization Lemma, for every $\Chi$ the map $M[\Chi]\to (M/I^{n+1}M)[\Chi]$ is an
            isomorphism for $n$ large enough, so $M\to M'$ is injective.
        \end{proof}

    \section{Formal \BBname{} functors}

    \newcommand{\Zform}{\mathcal{Z}}%
    \newcommand{\II}{\mathcal{I}}%

        In this section we introduce the formal version of \BBname{} functors.
        The notation and general outline are consistent
        with~\cite[Section~6]{jelisiejew_sienkiewicz__BB}.
        For an $n\in \mathbb{Z}_{\geq 0}$ let $\Gbar_n = V(\mm_0^{n+1}) \subset \Gbar$ be the $n$-th thickening
        of the $\kk$-point $0\in \Gbar$. Consider the set-valued functor
        \[
            \Xhat(S) = \left\{ (\varphi_n)_{n\in \mathbb{Z}_{\geq 0}} \ |\
            \varphi_n \colon \Gbar_n \times S \to X,\ \varphi_n\mbox{ is
            }\Group\mbox{-equivariant and } (\varphi_{n+1})_{|\Gbar_n \times
            S} = \varphi_n\mbox{ for all }n \right\}.
        \]
        We have a natural \emph{formalization map} $\Xplus \to \Xhat$ given by
        $\varphi \mapsto (\varphi_{|\Gbar_n \times S})_n$. Eventually, we will
        prove that it is an isomorphism.
        The crucial technical advantage of $\Xhat$ over $\Xplus$ is that it
        avoids the topological
        issues altogether: the set-theoretic image of each $\varphi_n$ lies in
        $X^{\Group}$.

        \subsection{Formal $\Gbar$-schemes and $\Gbar$-linearized sheaves}
        We introduce some notation for the formal neighbourhoods of fixed
        points in $X$. It is convenient to make this in a very general
        way.
        \begin{definition}\label{def:formalschemes}
            A \emph{formal $\Group$-scheme}
            consists of a sequence of $\Group$-schemes $\Zform = (Z_{n})_{n\in
            \mathbb{Z}_{\geq 0}}$ and equivariant closed immersions
            \[
            \begin{tikzpicture}
                [description/.style={fill=white,inner sep=2pt}]
                \matrix (m) [matrix of math nodes, row sep=3em, column sep=3em,text height=1.5ex, text depth=0.25ex] 
                { Z_0 &  Z_1 &  \ldots  & Z_n & Z_{n+1} &  \ldots  \\} ;
                \path[right hook->,font=\scriptsize]
                (m-1-1) edge node[above] {$ $} (m-1-2)
                (m-1-2) edge node[above] {$ $} (m-1-3)
                (m-1-3) edge node[above] {$ $} (m-1-4)
                (m-1-4) edge node[above] {$ $} (m-1-5)
                (m-1-5) edge node[above] {$ $} (m-1-6);
            \end{tikzpicture}
            \]
            such that
            \begin{enumerate}
                \item the $\Group$-action on $Z_0$ is trivial and $Z_0 \into
                    Z_n^{\Group}$ is an isomorphism,
                \item if $\II \subset \OO_{Z_n}$ is the ideal sheaf defining
                    $Z_0 \subset Z_n$, then for all $m \leq n$ the ideal sheaf
                    $\II^{m+1}$ defines $Z_m \subset Z_n$.
            \end{enumerate}
            \emph{A morphism of formal $\Group$-schemes} $f\colon (W_n)\to (Z_n)$ is a family of
            $\Group$-equivariant morphisms $(f_n\colon W_n \to Z_n)_n$ compatible with
            closed immersions $W_{n}\into W_{n+1}$ and $Z_{n}\into
            Z_{n+1}$.
            If every $Z_n$ is a $\Gbar$-scheme and the closed immersions $Z_n\into
            Z_{n+1}$ are $\Gbar$-equivariant then we say that $\Zform$ is a
            \emph{formal $\Gbar$-scheme}. Similarly, a \emph{morphism of
            formal $\Gbar$-schemes} is a morphism
            of formal $\Gbar$-schemes such that every $f_n$ is
            $\Gbar$-equivariant.
            If every $Z_n$ is locally Noetherian, we say that $\Zform$ is
            \emph{locally Noetherian}.
        \end{definition}
        \begin{remark}\label{ref:formallocallylinear:rmk}
            For a formal $\Group$-scheme $\Zform = (Z_n)$ and any $m > n$ the
            radicals of ideal sheaves of $Z_n\subset Z_m$ and $Z_0\subset Z_m$ are equal, so
            $Z_0\into Z_n$ is a
            homeomorphism of topological spaces. Therefore for every $n$, the
            $\Group$-action on the topological space $|Z_n|$
            is trivial, so every $Z_n$ is locally
            $\Group$-linear.
        \end{remark}

        \begin{definition}[Quasi-coherent sheaves on $\Zform$]
            Let $\Zform = (Z_n)_n$ be a formal $\Group$-scheme. The closed
            inclusions $Z_n\into Z_{n+1}$ induce a sequence of restriction maps
            \begin{equation}\label{eq:telescopeOfQCoh}
                \begin{tikzcd}
                    \ldots \ar[r] & \Qcoh_{\Group}(Z_{n+1})\ar[r]  &  \Qcoh_{\Group}(Z_n)\ar[r] &
                     \ldots \ar[r] & \Qcoh_{\Group}(Z_0)
                \end{tikzcd}
            \end{equation}
            A \emph{quasi-coherent $\Group$-linearized sheaf} on $\Zform$ is a
            sequence $(F_n\in \QcohG{Z_n})_n$ together with isomorphisms
            $i_n\colon F_{n+1}|_{Z_n}\to F_n$.
            A \emph{morphism of quasi-coherent $\Group$-linearized sheaves}
            $\varphi\colon (F_{\bullet},i_{\bullet})\to
            (G_{\bullet},j_{\bullet})$ is a sequence of morphisms
            $\varphi_n\colon F_n\to G_n$ such that for every
            $n$ the following diagram commutes
            \[
                \begin{tikzcd}
                    F_{n+1}|_{Z_n} = F_{n+1}\tensor_{\OO_{Z_{n+1}}}
                    \OO_{Z_{n}} \arrow[d, "\varphi_{n+1}\tensor \id"]\arrow[r,
                    "i_n"] & F_n \arrow[d, "\varphi_n"]\\
                    G_{n+1}|_{Z_n} = G_{n+1}\tensor_{\OO_{Z_{n+1}}}
                    \OO_{Z_{n}}\arrow[r, "j_n"] & G_n
                \end{tikzcd}
            \]
            These objects and morphisms form the \textit{category
                $\QcohG{\Zform}$ of $\Group$-linearized quasi-coherent sheaves on
            $\Zform$} which is a symmetric monoidal category, that is a $2$-categorical limit of the
            diagram~\eqref{eq:telescopeOfQCoh}.
            We say that a $\Group$-linearized sheaf $F$ of $\Zform$ is
            \emph{of finite type} if there exists an affine open cover of
            $|Z_0|$ such
            that the restrictions of $F$ to each member of this open cover are
            finitely generated modules in the sense of
            Corollary~\ref{ref:finitelygenerated:cor}. We denote by
            $\QcohGft{\Zform}$ the full subcategory of $\QcohG{\Zform}$
            consisting of finite type sheaves.
        \end{definition}

        We have a natural formalization and --- much
        subtler --- algebraization in the $\Group$-equivariant setting, that
        are the geometric analogues of the ones from
        Section~\ref{sec:formalSchemes}.
        Eventually we will use them to show that the formalization map
        $\Xplus\to \Xhat$ is an isomorphism.

        \newcommand{\Zhat}{\hat{Z}}%
        \begin{definition}[Formalization]\label{ref:formalizationGeometric:def}
            Let $X$ be a $\Group$-scheme and $\II \subset \OO_X$ be the ideal
            defining $X^{\Group}$. The \emph{formal $\Group$-scheme $\Zhat$
            associated to $X$} is the sequence $(Z_n)_n$ where $Z_n =
            (V(\II^{n+1}))^{+}$ is the \BBname{} decomposition of the $(n+1)$-th
            thickening of the fixed point set; the decomposition exists because the action of
            $\Group$ on the topological space of
            $V(\II^{n+1})$ is trivial, so this scheme is locally linear and
            Proposition~\ref{ref:representabilityForLocLin:prop} applies. The inclusions $Z_n\to Z$ induce restrictions
            $\QcohG{Z}\to \QcohG{Z_n}$ which together give a natural
            \emph{comparison functor} $\QcohG{Z}\to
            \QcohG{\Zform}$ that is cocontinuous and tensor (see appendix
            for definitions) and
            preserves finite type sheaves.
        \end{definition}
        \begin{definition}[Algebraization]
            Let $\Zform$ be a formal $\Gbar$-scheme. An \emph{algebraization}
            of $\Zform$ is a $\Gbar$-scheme $Z$ such that
            $\Zform$ is isomorphic to the formal $\Group$-scheme
            associated to $Z$ and the associated restriction functor $\QcohGft{Z}\to
            \QcohGft{\Zform}$ is an equivalence.
        \end{definition}

        \subsection{Algebraization, geometric
        counterpart}\label{ssec:algebraization_geometric}

        To a formal $\Gbar$-scheme $\Zform =
        (Z_n)$ we may associate the
        family of maps $\pi_n\colon Z_n\to Z_0$ which are multiplications by
        $0\in \Gbar$. In particular, $\pi_{n+1}|_{Z_{n}} = \pi_n$ for all $n$.
        By Remark~\ref{ref:formallocallylinear:rmk} and
        Proposition~\ref{ref:representabilityForLocLin:prop}, the maps $\pi_n$ are
        affine, so we have a sequence of sheaves of quasi-coherent
        $\Gbar$-algebras on $Z_0$:
        \[
            A_{n} := (\pi_{n})_*\OO_{Z_n}
        \]
        and surjections $A_{n+1}\onto A_n$. We call $(A_n)$ the
        \emph{sheaf of $\Gbar$-algebras associated to $\Zform$}.
        The category $\RepGlambda$ behaves well when we
        pass from $\RepG$ to $\RepGbar$ and then to $\Gbar$-equivariant
        sheaves on a
        $\Gbar$-scheme $Z_0$ with a trivial $\Gbar$-action.
        Indeed, fix a quasi-coherent sheaf $A$ with a $\Gbar$-action and
        define $A[\lambda] \subset A$ by
        setting $H^0(U, A[\lambda]) := H^0(U, A)[\lambda]$ for every open $U$. Since
        $(-)[\lambda]$ is left exact, we obtain a subsheaf.
        For every open $U$ the algebra $H^0(U, \OO_{Z_0})$ is a trivial
        $\Gbar$-representation, so the structural map
        $H^0(U, \OO_{Z_0})\tensor_{\kk} H^0(U, A)\to H^0(U, A)$
        descends to
        \[
            H^0(U, \OO_{Z_0})\tensor_{\kk} H^0(U, A)[\lambda]\to
            H^0(U, A)[\lambda].
        \]
        and so $A[\lambda] \subset A$ is a subsheaf of $\Gbar$-modules and
        $\OO_{Z_0}$-modules; since the action of $\Gbar$ on $Z_0$ is trivial
        those structures commute.
        We can thus form $\lambda$-algebraizations of sheaves. For a sheaf of
        algebras $A$ its $\lambda$-algebraization is a sheaf of algebras as
        well.
        If we fix a central torus $T$ the same holds for $(-)[\Chi]$ instead
        of $(-)[\lambda]$.

        \begin{theorem}[algebraization of a formal
            $\Gbar$-scheme]\label{ref:algebraization:thm}
            Let $\Zform = (Z_n)$ be a locally Noetherian formal $\Gbar$-scheme with associated
            sheaf of algebras $(A_n)$.
            The scheme $Z = \Spec_{Z_0}(\lim_{\Group}(A_n))$ is the colimit of
            $Z_n$ in the category of locally linear $\Gbar$-schemes
            and it is an algebraization of $\Zform$. Moreover, the scheme $Z$ is locally
            linear and $\iinfty{Z}\colon Z\to Z^{\Group}$ is affine of finite
            type, so $Z$ is locally Noetherian.
        \end{theorem}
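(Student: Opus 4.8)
The plan is to reduce everything to the affine-local results already in hand and then isolate the two genuinely new geometric facts: the universal property of $Z$ and finite type of $\iinfty{Z}$. First I would work locally on $Z_0$. Fix an affine open $U = \Spec(B) \subset Z_0$; since the projections $\pi_n$ are affine (Remark~\ref{ref:formallocallylinear:rmk} and Proposition~\ref{ref:representabilityForLocLin:prop}) we have $\pi_n^{-1}(U) = \Spec(A_n(U))$, and I claim $\AA(U) = (A_n(U))_n$ is a standard adic formal $\Gbar$-algebra in the sense of Definition~\ref{ref:standardadic:def}: adicity and the $A_0$-algebra structure come from axiom~(2) of a formal $\Gbar$-scheme, while $\Spec(A_n(U))^{\Group} \cong \Spec(B)$ is axiom~(1). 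As $\Zform$ is locally Noetherian, $B$ is Noetherian and each $A_n(U)$ is a finite type $B$-algebra, so $\AA(U)$, viewed as a module over itself with bottom piece $B$, is grounded. Theorem~\ref{ref:existenceOfAlgebraization:thm} then produces the algebraization $(A(U), I)$ with $A(U) = \lim_{\Group}\AA(U)$ and $A(U)/I^{n+1} = A_n(U)$, which by Corollary~\ref{ref:algebraizationsAgree:cor} coincides with $\lim_{T}\AA(U) = \bigoplus_{\chi} A(U)[\chi]$.

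These algebraizations are functorial in $U$, so by the sheaf-level discussion preceding the theorem they glue to the quasi-coherent sheaf of $\Gbar$-algebras $\lim_{\Group}(A_n)$ on $Z_0$, and I set $Z = \Spec_{Z_0}(\lim_{\Group}(A_n))$ with $\iinfty{Z}\colon Z\to Z_0$ affine. Since $Z_0$ carries the trivial $\Gbar$-action and $\lim_{\Group}(A_n)$ is a sheaf of $\Gbar$-algebras, $Z$ is a locally linear $\Gbar$-scheme with $Z^{\Group} = Z_0$ (its ideal being locally $I = \ker(A(U)\to B)$ by Corollary~\ref{ref:gradingPositive:cor}). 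To see that $Z$ algebraizes $\Zform$, note that the $(n+1)$-th thickening of $Z^{\Group}$ is locally $\Spec(A(U)/I^{n+1}) = \Spec(A_n(U)) = Z_n|_U$; as each $Z_n$ is already a $\Gbar$-scheme, Proposition~\ref{ref:representabilityForAffine:prop} gives $(V(\II^{n+1}))^{+} = V(\II^{n+1}) = Z_n$, identifying the formal $\Group$-scheme of $Z$ with $\Zform$. The equivalence $\QcohGft{Z}\to \QcohGft{\Zform}$ is then Theorem~\ref{ref:finitelyGeneratedAlgebraization:thm} applied over each $U$ (matching finite type sheaves with finitely generated modules in the sense of Corollary~\ref{ref:finitelygenerated:cor}), the local equivalences gluing by functoriality of $\lim_{\Group}$.

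The main difficulty is showing that $\iinfty{Z}$ is of finite type, i.e. that $A(U)$ is a finitely generated $B$-algebra; this is not formal, since $A(U)$ is a weight-bounded limit rather than an honest $I$-adic completion. Here I would argue as follows. Finite type of $A_1(U) = A(U)/I^2$ over $B$ makes $N := I/I^2$ a finitely generated $B$-module carrying a $T$-action with $B$ in weight zero (Corollary~\ref{ref:gradingPositive:cor}), so $N$ has finitely many $T$-weights; as $T$ is linearly reductive I may pick finitely many $T$-homogeneous $a_1,\ldots,a_r \in I$ lifting $B$-module generators of $N$. Let $A' = B[a_1,\ldots,a_r] \subseteq A(U)$, a $T$-stable subalgebra. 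Since the $a_i$ generate $N$ over $B$ and $\mathrm{gr}_I A(U)$ is generated in degree one, a standard graded-generation argument (using $I^{n+1}A_n(U) = 0$) shows $A'\to A_n(U)$ is surjective for every $n$. Now for any finite set $\Chi$ of $T$-characters, $T$-equivariant stabilization (Lemma~\ref{ref:stabilizationAlgebraically:lem} together with~\eqref{ref:ChiOnChiAlgebraization:eq}) gives $A(U)[\Chi] = A_n(U)[\Chi]$ for $n\gg 0$, and exactness of $(-)[\Chi]$ turns the surjection $A'\to A_n(U)$ into a surjection $A'[\Chi]\to A(U)[\Chi]$. Since $A(U) = \bigoplus_{\chi}A(U)[\chi]$ and $A'$ is $T$-graded, this forces $A' = A(U)$. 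Hence $\iinfty{Z}$ is affine of finite type over the locally Noetherian $Z^{\Group} = Z_0$, so $Z$ is locally Noetherian.

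Finally, for the colimit property I would check the universal property after reducing to the affine situation: all $Z_n$ share the underlying space $|Z_0|$ (Remark~\ref{ref:formallocallylinear:rmk}), so a compatible system of $\Gbar$-equivariant maps $Z_n\to Y$ into a locally linear $\Gbar$-scheme $Y$ is computed on $\Gbar$-stable affine opens, where it amounts to a compatible system of $\Gbar$-equivariant algebra maps $\OO(Y)\to A_n(U)$. Because $\lim_{\Group}(A_n)$ is literally the limit of the $A_n$ in the category of $\Group$-representations, such systems correspond bijectively to $\Gbar$-equivariant maps $\OO(Y)\to A(U)$, i.e. to $\Gbar$-maps $Z\to Y$; gluing over the cover shows $Z = \colim_n Z_n$ in locally linear $\Gbar$-schemes, completing the proof.
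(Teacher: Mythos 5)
Your proposal is correct in substance and shares its skeleton with the paper's proof: both work affine-locally over $Z_0$, observe that $(A_n(U))_n$ is a standard adic, grounded formal $\Gbar$-algebra, invoke Theorem~\ref{ref:existenceOfAlgebraization:thm} and Corollary~\ref{ref:algebraizationsAgree:cor} to produce the algebraization $A(U)$ with $A(U)/I^{n+1}=A_n(U)$, and obtain the equivalence $\QcohGft{Z}\to\QcohGft{\Zform}$ from Theorem~\ref{ref:finitelyGeneratedAlgebraization:thm}. Where you genuinely diverge is in the two geometric steps. For finite type, the paper first proves that each $A_n$ is a \emph{coherent} $\OO_{Z_0}$-module (via the finite filtration by powers of the nilpotent ideal $\ker(A_n\to A_0)$), deduces coherence of each $A[\lambda]$ by stabilization, picks a surjection $A[\lambda]^{\oplus r}$ onto $\ker(A\to A_0)/\ker(A\to A_0)^2$, and finishes with graded Nakayama over $\kkbar$; you instead exhibit an explicit $T$-stable subalgebra $A'=B[a_1,\dots,a_r]$, prove $A'\onto A_n(U)$ by degree-one generation of $\mathrm{gr}_I A(U)$, and conclude $A'=A(U)$ isotypically from stabilization and exactness of $(-)[\Chi]$. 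This is valid and arguably more elementary, but needs two small repairs: you assert rather than prove that $A_1(U)$ is finite over $B$ (this is exactly the paper's filtration observation: $\ker(A_1(U)\to B)$ is a finitely generated ideal of a Noetherian ring whose square is zero, hence a finite $B$-module), and over a non-closed field single ``$T$-homogeneous elements'' should be replaced by finite-dimensional $T$-subrepresentations of $I$, since simple representations of the possibly non-split torus $T$ need not be one-dimensional (alternatively, pass to $\kkbar$ and descend finite type, as the paper implicitly does).

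For the colimit property, the paper does not reduce to $\Gbar$-stable affine opens of the target $W$: it forms the fiber product $W'=W\times_{W^{\Group}}Z_0$, which is affine over $Z_0$ because Proposition~\ref{ref:representabilityForLocLin:prop} makes $W\to W^{\Group}$ affine, and then compares sheaves of $\Gbar$-algebras over $Z_0$ directly. This buys two things your sketch glosses over: (i) it only requires $W$ to be covered by $\Group$-stable affine opens, which is what Proposition~\ref{ref:representabilityForLocLin:prop} actually needs, whereas your reduction presupposes a cover by $\Gbar$-stable affines; and (ii) it removes the gluing and uniqueness bookkeeping your chart-by-chart construction requires (on overlaps your maps land in different affine charts of the target, and one must check agreement, e.g.\ via injectivity of $A(U)\to\prod_n A_n(U)$ after refining the cover, together with an upgrade from $\Group$- to $\Gbar$-equivariance of the glued morphism). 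These points are fixable, so I would classify them as repairable roughness rather than fatal gaps; the mechanism you rely on --- that $\lim_{\Group}(A_n)$ is the limit of the $A_n$ in the category of rational $\Group$-representations --- is precisely the one the paper exploits, just packaged through the fiber-product trick so that every algebra in sight is an honest $\Gbar$-representation over $Z_0$.
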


        \begin{proof}
            \def\KK{\mathcal{K}}%
            \def\JJ{\mathcal{J}}%
            Let $A = \lim_{\Group}(A_n)$.
            We also fix central Kempf torus $\Group_{m,\kkbar}\to Z(\Group)$
            and its image $T \subset Z(\Group)$. This is a one-dimensional
            torus, possibly non-split, see Subsection~\ref{ssec:KempfMonoids}.
            By Theorem~\ref{ref:existenceOfAlgebraization:thm} applied to
            sections of $A$, the map $A\to A_n$ is surjective and its
            kernel is equal to $\ker(A\to A_0)^{n+1}$ section-wise.
            Therefore indeed the formalization of $Z$ is $\Zform$.

            Now we prove that $Z = \colim Z_n$ in the category of locally
            linear $\Gbar$-schemes. Fix a locally linear $\Gbar$-scheme $W$
            with coherent maps $f_n\colon Z_n\to W$.
            In particular, we get a map $f_0\colon Z_0\to W^{\Group}$.

            By
            Proposition~\ref{ref:representabilityForLocLin:prop} the
            multiplication by $0_{\Gbar}$ map $\pi_W\colon W\to W^{\Group}$ is
            affine. Let $W' := W \times_{W^{\Group}} Z_0$. The map $W'\to Z_0$
            is affine and $\Gbar$-equivariant. We have induced maps $f'_n = f_n
            \times \pi_n\colon Z_n\to W'$ over $Z_0$.
            To sum up, we have
            the following situation and we want to find the dashed arrow
            \newcommand{\aff}{\mathrm{aff}}
            \[
                \begin{tikzcd}[row sep=large, column sep=large]
                    &Z\arrow[rd, dashed, bend left]&&\\
                    Z_n\arrow[ru, hook]\arrow[r, hook]\arrow[rrd,
                    "\pi_n"']\arrow[rr, "\phantom{mmmmm}f_n'",
                    bend left]
                    & Z_{n+1}\arrow[u, hook]\arrow[r,
                    "f_{n+1}'"]\arrow[rd, "\pi_{n+1}"]
                    & W' \arrow[r]\arrow[d,
                    "\aff"] & W\arrow[d, "\aff"]\\
                    && Z_0 \arrow[r, "f_0"] & W^{\Group}
                \end{tikzcd}
            \]
            All schemes $Z_n$, $Z$ and $W'$ are affine over $Z_0$. Let $B$
            be the sheaf of $\Gbar$-algebras on $Z_0$ such that
            $W'=\Spec_{Z_0}(B)$. Note that $B = \bigcup B[\lambda]$.
            The maps $f_n'$ induce compatible $\Gbar$-equivariant morphisms
            $f_n^{\#}\colon B\to A_n$. By $\Gbar$-equivariance, for every
            $\lambda$, they restrict to
            $f_{n}^{\#}\colon B[\lambda]\to A_n[\lambda]$ and give
            morphisms
            \[
                F[\lambda] : B[\lambda]\to A[\lambda].
            \]
            Composing those with $A[\lambda]\to A$, we obtain
            $F[\lambda]\colon B[\lambda]\to A$ which in turn glue to
            $F\colon B\to A$. (Compatibility of any two $F[\lambda_1]$ and
            $F[\lambda_2]$
            follows from compatibility of $f_n^{\#}$, because of stabilization.)
            This is a $\Gbar$-equivariant homomorphism of $\OO_{Z_0}$-algebras
            and it gives the required map $Z\to W'$ and in turn $Z\to W$.

            By assumption the scheme $\Zform$ is locally Noetherian; we will use
            it to prove that
            $Z\to Z_0$ is of finite type. The argument
            of~\cite[Theorem~6.8]{jelisiejew_sienkiewicz__BB} generalizes
            verbatim; we repeat it for completeness. By assumption the scheme $Z_0$ is
            locally Noetherian. For every $n$, in the $\OO_{Z_0}$-module $A_n$
            we have a \emph{coherent} ideal sheaf $\KK_n := \ker(A_n\to A_0)$ and by
            definition of formal schemes we have $\KK_n^n = \ker(A_n\to A_n) = 0$, so
            $0 = \KK_n^n \subset \KK_n^{n-1} \subset \ldots \subset \KK
            \subset A_n$ is a finite filtration whose quotients are
            coherent $\OO_{Z_n}$-modules that are in fact $\OO_{Z_0}$-modules.
            Thus the $\OO_{Z_0}$-module $A_n$ is coherent as well.
            Section-wise application Stabilization
            Lemma~\ref{ref:stabilizationAlgebraically:lem}
            implies that for every $\lambda$ the
            $\OO_{Z_0}$-module $A[\lambda]$ is coherent.
            The $A$-module $\JJ_0/\JJ_0^2 \subset A_2$ is coherent, so we
            can fix $\lambda$ and a map
            $A[\lambda]^{\oplus r}\to \JJ_0$ that restricts to a
            surjection $A[\lambda]^{\oplus r}\to \JJ_0/\JJ_0^2$.
            We claim that $A$ is generated as an $\OO_{Z_0}$-algebra
            by the image of $A[\lambda]^{\oplus r}$. To prove this, we pass
            to $\kkbar$ and use the grading induced by the Kempf torus. By
            Stabilization Lemma for the trivial
            representation, the degree zero part of $A$ is indeed $A_0 =
            \OO_{Z_0}$. Therefore $\JJ_0$ is positively graded and the result
            follows from graded Nakayama lemma.

            It remains to prove that $\QcohGft{Z}\to \QcohGft{\Zform}$ is an
            equivalence, but this follows from
            Theorem~\ref{ref:finitelyGeneratedAlgebraization:thm}.
        \end{proof}

        \subsection{Main proofs}

        In this short section we apply the results of the previous one to
        prove Theorem~\ref{ref:intro:mainthm}.

        \begin{proof}[Proof of Theorem~\ref{ref:intro:mainthm}]
        We begin by constructing a formal $\Gbar$-scheme and its
        algebraization.
        Let $X$ be a Noetherian $\Group$-scheme and $X_n = V(\II^{n+1})$ where $\II
        \subset \OO_X$ is the sheaf defining $X^{\Group}$, so $X_0 =
        X^{\Group}$.
        Let $Z_n = X_n^+$ be the \BBname{} decomposition, it is a closed
        subscheme of $X_n$, see Definition~\ref{ref:formalizationGeometric:def}.
        Let $\Zform =
        (Z_n)$ and let $\Xhat$ be the algebraization of $\Zform$ as in
        Theorem~\ref{ref:algebraization:thm}.
        The scheme $\Xhat$ has a natural map $\Xhat \to X^{\Group}$, but a
        priori lacks the ``inclusion of
        cells'' map $\Xhat \to X$.
        In fact, the construction of this inclusion map is subtle
        topologically, as $\Xhat$ is obtained from a formal neighbourhood of
        $X^{\Group}$ and does not see directly the topology of $X$.
        We now construct this map using the formalism of Tannaka duality,
        which is described in the
        appendix.

        By
        Theorem~\ref{ref:algebraization:thm}, we have $\QcohGft{\Xhat} =
        \lim_n \QcohGft{Z_n}$.
        The pullback via closed inclusions $Z_n \into X_n \into X$ gives a functor $F\colon
        \QcohGft{X}\to \lim_n \QcohGft{Z_n}$.
        The composition $F\colon \QcohGft{X}\to \QcohGft{\Xhat}$ is a cocontinuous
        tensor functor. Since $X$ is Noetherian, each $\Group$-linearized
        quasi-coherent sheaf on $X$ is a filtered limit of coherent
        $\Group$-linearized sheaves, see for
        example~\cite[Tag~07TU]{stacks_project}. The category of
        quasi-coherent sheaves of $\Xhat$ with $\Group$-linearization admits
        all limits, so the functor $F$ extends to a functor $\QcohGX\to
        \QcohG{\Xhat}$ that we also denote by $F$.
        Let $p_X\colon X\to \Spec(\kk)$ and $p_{\Xhat} \colon
        \Xhat\to \Spec(\kk)$ be the structure maps. Then $F\circ p_X^*$ and
        $p_{\Xhat}^*$ are isomorphic as this holds coherently for all $Z_n$.
        By Theorem~\ref{ref:tensorialquotientstacks:thm}, we obtain a
        $\Group$-equivariant map
        \[
            I\colon \Xhat\to X
        \]
        such that for every $n$ the map $I_{|Z_n}$ is the inclusion of $Z_n$.
            The $\Gbar$-scheme $\Xhat$ is a scheme over $X$ via the
            equivariant map $I$, so we have a family
            \[
                \Phi\colon \Gbar \times \Xhat \to X.
            \]
            It remains to prove that it is universal. For every other morphism
            $\varphi\colon \Gbar \times S\to X$ we have restrictions
            $\varphi_n\colon \Gbar_n \times S\to X_n$, which factor uniquely
            through $Z_n$ and hence give maps $S\to Z_n$. Since $\Xhat$ is a
            colimit of $Z_n$'s by
            Proposition~\ref{ref:algebraization:thm}, we obtain a map $S\to
            \Xhat$. Since $\varphi_n$ is a pullback of the canonical family on
            $Z_n$, the family $\varphi$ agrees to any finite order with the pullback of the
            universal family $\Phi$, hence $\varphi$ is equal to this pullback. This proves
            that $(\Xhat, \Phi)$ represents the functor $\Xplus$.
        \end{proof}

        Proposition~\ref{ref:intro:mainSmooth} now follows very similarly as the proof in the linearly
        reductive case~\cite[\S7]{jelisiejew_sienkiewicz__BB}. The key point
        is the mere existence and affineness of $\iinftyX\colon\Xplus\to X^{\Group}$.
        \begin{proof}[Proof of Proposition~\ref{ref:intro:mainSmooth}]
            Since $\iinftyX$ is affine and smooth at $x$ by assumption, we can
            write it locally on $X^{\Group}$ as $\pi\colon\Spec(B)\to
            \Spec(A)$, where $B$ is a $\Gbar$-algebra and $\pi$ is
            $\Gbar$-invariant. Since $\pi$ has a section, the map
            $\pi^{\#}\colon A\to B$ is injective.
            Suppose first $\kk = \kkbar$. Then a Kempf line sits inside
            $\Gbar$ and induces an $\mathbb{N}$-grading on $B$ such that $A
            \subset B_0$. By Corollary~\ref{ref:gradingPositive:cor}, we in fact have $A =
            B_0$. In this setup, the claim follows
            from~\cite[Lemma~7.2]{jelisiejew_sienkiewicz__BB}.

            For general $\kk$, we may assume $x$ is closed. Now the morphism $\Xplus_{\kkbar}\to
            (X^{\Group})_{\kkbar}$ is an affine space fibration near $x$ by
            the previous case, so
            in fact near $x$ it a trivial affine space fibration, i.e., a
            projection from a trivial vector bundle, so $\Xplus\to
            X^{\Group}$ is a $\GL$-torsor. But being a $\GL$-torsor
            bundle is Zariski-local, so we get that also $\iinftyX$ is such
            locally near $x$ and that concludes the proof.
        \end{proof}

\appendix
\section{Tannakian formalism}
    \newcommand{\divG}[1]{\left[ #1/\Group \right]}%
    \newcommand{\XdivG}{\divG{\varX}}%
    Let $\Group$ be a linear group over $\kk$ and $\varX$ be a
    $\Group$-scheme.
    In this appendix we recover $\Group$-equivariant
    morphisms from pullback-like maps. This is used
    crucially in the proof of Theorem~\ref{ref:intro:mainthm} to obtain the
    ``embedding of cells'' map $\Xhat \to \varX$.

    We need some preliminary notions.
    A \emph{cocontinuous} functor $F$ between tensor categories is a functor
    that preserves all small colimits; in
    particular it is right-exact. A \emph{tensor functor} is a strong symmetric
    monoidal functor, which means that $F(M\tensor N)  \simeq F(M)\tensor
    F(N)$ and $F(1)  \simeq 1$ and these isomorphisms are subject to
    compatibility
    conditions~\cite[I.4.1.1-4.2.4]{Rivano__Tannakian_cats}.
    For a $\Group$-scheme $T$ the equivariant map $p_{T}\colon T\to \Spec(\kk)$
    induces a pullback $p^{*}_{T}\colon \RepG\to \QcohG{T}$ which maps a
    $\Group$-representation $V$ to $V\tensor_{\kk} \OO_T$ with the natural
    linearization.
    For a $\Group$-equivariant morphism $\varphi\colon Y\to X$ of
    $\Group$-schemes, we have a natural isomorphism $\alpha_{\varphi}\colon
    \varphi^*\circ p_X^* \to p_Y^*$ of functors.
    Our main result is the following.

    \begin{theorem}\label{ref:tensorialquotientstacks:thm}
		Let $\varX$ be a quasi-compact quasi-separated $\Group$-scheme.
        Let $Y$ be a $\Group$-scheme. Let $F\colon \QcohG{X}\to
        \QcohG{Y}$ be a cocontinuous tensor functor and let $\alpha\colon
        F\circ p_X^*\to p_Y^*$ be an
        isomorphism of functors $\RepG \to \QcohG{Y}$.
        Then there exists a unique $\Group$-equivariant
        morphism $f\colon Y\to \varX$ such that $(F, \alpha)  \simeq (f^*,
        \alpha_f)$.
    \end{theorem}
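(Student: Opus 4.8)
The plan is to translate the whole statement into the language of quotient stacks and then invoke the coherent Tannaka duality of Hall--Rydh~\cite{HR}. Write $\XdivG$ and $\divG{Y}$ for the quotient stacks; since $\Group$ is a linear (in particular affine) group and $X$, $Y$ are $\Group$-schemes, these are algebraic stacks, and the hypothesis that $X$ is quasi-compact quasi-separated makes $\XdivG$ quasi-compact quasi-separated. Under the standard equivalences $\QcohG{X}\simeq \Qcoh(\XdivG)$, $\QcohG{Y}\simeq \Qcoh(\divG{Y})$, and $\RepG\simeq \Qcoh(B\Group)$, the pullbacks $p_X^*$ and $p_Y^*$ become the pullbacks along the structure morphisms $\bar p_X\colon \XdivG\to B\Group$ and $\bar p_Y\colon \divG{Y}\to B\Group$ induced by the equivariant maps $X\to \Spec(\kk)$ and $Y\to \Spec(\kk)$. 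Thus $F$ becomes a cocontinuous tensor functor $\Qcoh(\XdivG)\to \Qcoh(\divG{Y})$ and $\alpha$ an isomorphism $F\circ \bar p_X^* \simeq \bar p_Y^*$, i.e.\ the datum of a $\Qcoh(B\Group)$-compatible tensor functor.

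First I would produce a morphism of stacks, forgetting the $B\Group$-structure. Applying Tannaka duality to the cocontinuous tensor functor $F\colon \Qcoh(\XdivG)\to \Qcoh(\divG{Y})$ yields a morphism $g\colon \divG{Y}\to \XdivG$ together with an isomorphism $F\simeq g^*$; moreover the assignment $g\mapsto g^*$ is an equivalence of categories, which is what will give uniqueness at the end. This is the main obstacle: the published main theorems of~\cite{HR} are stated under hypotheses (on the diagonal, quasi-affineness, or the resolution property of the target) that are not automatic here for a general linear group $\Group$. I therefore expect to have to re-derive the reconstruction by entering the internal argument of~\cite{HR}, checking that $\XdivG$ is a quasi-compact quasi-separated stack whose diagonal is controlled well enough---affineness of $\Group$ and quasi-separatedness of $X$ give a concrete presentation via the action groupoid $\Group\times X\rightrightarrows X$---for their approximation and descent machinery to go through in our generality.

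Finally I would upgrade $g$ to a morphism over $B\Group$ and translate it back into an equivariant morphism of schemes. The isomorphism $\alpha$ identifies $\bar p_Y^*$ with $g^*\circ \bar p_X^* = (\bar p_X\circ g)^*$, so by the full faithfulness already invoked, $\bar p_X\circ g$ is $2$-isomorphic to $\bar p_Y$; hence $g$ is canonically a morphism $\divG{Y}\to \XdivG$ over $B\Group$. A morphism over $B\Group$ is the same datum as a $\Group$-equivariant morphism $Y\to X$: pulling $g$ back along the atlas $Y\to \divG{Y}$, whose composite to $B\Group$ is the trivial torsor and so factors through $\Spec(\kk)$, lands in the fibre $\XdivG\times_{B\Group}\Spec(\kk) = X$, producing $f\colon Y\to X$, while the groupoid descent data encode exactly the $\Group$-equivariance of $f$. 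Unwinding the equivalences then shows $(F,\alpha)\simeq (f^*,\alpha_f)$. Uniqueness is inherited from the full faithfulness in Tannaka duality together with the faithfulness of the quotient-stack dictionary, so any two such $f$ coincide.
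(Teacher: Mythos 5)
There is a genuine gap at the first and central step of your plan. You propose to forget the $B\Group$-structure, apply Tannaka duality to the absolute functor $F\colon \Qcoh([X/\Group])\to\Qcoh([Y/\Group])$ to get $g$ with $F\simeq g^*$, and only reinstate $\alpha$ at the end. You correctly sense that the published theorems of \cite{HR} do not apply directly, but your fallback --- ``re-derive the reconstruction by entering the internal argument of \cite{HR}, checking that $[X/\Group]$ is quasi-compact quasi-separated with controlled diagonal'' --- misidentifies where the obstruction sits and would not go through. The hypothesis that fails in the main theorem of \cite{HR} is not about the target $[X/\Group]$ (its diagonal is affine since $\Group$ is affine, which is exactly what gives full faithfulness via \cite[Prop.~4.8(i)]{HR}); it is about the \emph{source} $[Y/\Group]$, which their theorem requires to be locally excellent. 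Here $Y$ is an arbitrary $\Group$-scheme, and no amount of control on the presentation $\Group\times X\rightrightarrows X$ of the target repairs this: the Noetherian-approximation machinery of \cite{HR} runs on the source side and is simply unavailable. So the absolute reconstruction you defer to is precisely the statement that cannot be obtained in this generality, and your proposal never supplies it.

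The idea you are missing is that $\alpha$ must be used at the \emph{beginning}, not the end: it is the input that makes reconstruction possible without excellence hypotheses, not a decoration to be reinstated afterwards. Concretely, the paper argues as follows. Let $\Lambda = H^0(\Group,\OO_{\Group})$ with its natural linearization and let $s\colon X\to [X/\Group]$ be the atlas; then $s_*\OO_X\simeq p_X^*(\Lambda)$, so the isomorphism $\alpha$ (i.e.\ $F\circ p_X^*\simeq p_Y^*$) forces $F(s_*\OO_X)\simeq \Lambda\tensor_{\kk}\OO_Y$, whence $\Spec_{[Y/\Group]}F(s_*\OO_X)\simeq Y$. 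By \cite[Cor.~3.6]{HR}, $F$ therefore induces a cocontinuous tensor functor $\Qcoh(X)\to\Qcoh(Y)$ between honest schemes, where the tensoriality theorem of \cite{Brandenburg__Chirvasitu__tensoriality} applies to quasi-compact quasi-separated $X$ with \emph{no} hypotheses on $Y$; finally \cite[Lemma~4.2(iii)]{HR}, using smoothness of $Y\to[Y/\Group]$, lifts the resulting pullback description back to $F$ itself. Without $\alpha$ in hand you cannot even identify $\Spec_{[Y/\Group]}F(s_*\OO_X)$ with $Y$, so this descent to schemes --- the whole engine of the proof --- is unavailable on your route. (Your last paragraph, identifying morphisms over $B\Group$ with equivariant morphisms and extracting uniqueness from full faithfulness, is fine in outline; the paper proves that dictionary as a separate lemma by a strictification argument.)
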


    The most natural language of proving (and even formulating)
    Theorem~\ref{ref:tensorialquotientstacks:thm} is the theory of algebraic
    stacks. We recall below the bare minimum necessary for its proof. For an
    introduction to stacks, see~\cite{Olsson}.  For a sketch of the proof
    without using the language of stacks, see the first arXiv version of the
    present article.

    Let $\star = \Spec(\kk)$.
    Let $T$ be a $\Group$-scheme over $\Spec(\kk)$. The quotient stack $[T/\Group]$ is the category
    fibred in groupoids, whose $S$-points are pairs $(\pi, \gamma)$, where
    $\pi\colon P\to S$ is a principal $\Group$-bundle over $S$ and
    $\gamma\colon P\to T$ is a $\Group$-equivariant morphism.
    Since $\Group$ is affine, this stack has affine diagonal.
    There is a
    forgetful map $p_T\colon \divG{T}\to\divG{\star}$ that maps the pair
    $(\pi, \gamma)$ to the pair $(\pi, \pr\circ \gamma)$, where $\pr\colon
    T\to \Spec(\kk)$ is the structural map.
    The category $\QcohG{T}$ is equivalent to the category $\Qcoh(\divG{T})$
    of quasi-coherent sheaves on the stack $\divG{T}$,
    see~\cite[Example~9.1.19]{Olsson}.

    \newcommand{\Homqc}{\Hom_{r\tensor, \simeq}}%
    \newcommand{\HomqcoverBG}{\Hom_{r\tensor, \simeq,\divG{\star}}}%
    Let now $X$, $Y$ be two $\Group$-schemes over $\Spec(\kk)$.
    We now recall the definition of the groupoid $\Mor_{\divG{\star}}\left( \divG{Y}, \divG{X}
    \right)$. Its objects are pairs $(f, \alpha)$, where $f\colon \divG{Y}\to
    \divG{X}$ is a morphism of stacks and $\alpha\colon p_X\circ f\to p_Y$ is
    an isomorphism of functors. Its morphisms $(f, \alpha)\to (f', \alpha')$
    are isomorphisms of functors $\beta\colon f\to f'$ that satisfy
    $\alpha'\circ (p_X \beta) = \alpha$.
    We now define the groupoid
    $\HomqcoverBG(\QcohG{X}, \QcohG{Y})$. Its objects are pairs
    $(F, A)$ where $F\colon \QcohG{X}\to \QcohG{Y}$ is a cocontinuous tensor
    functor and $A\colon F\circ p_X^*\to p_Y^*$ is an isomorphism of functors.
    Its morphisms $(F, A)\to (F', A')$ are isomorphisms of functors $B\colon
    F\to F'$ that satisfy $A'\circ (B p_X^*) = A$. Both groupoids defined
    above are the usual mapping spaces of two categories fibered in groupoids
    over a third such category.
    Let $\Mor_{\Group}(Y, X)$ be the set of $\Group$-equivariant
    morphisms $\varphi\colon Y\to X$. There is a natural functor
    \[
        [-/\Group]\colon \Mor_{\Group}(Y, X)\to \Mor_{\divG{\star}}(\divG{Y},
        \divG{X})
    \]
    that maps $\varphi$ to $([\varphi/\Group], \id)$.
    As in~\cite{HR} there is also a natural functor
    \[
        \omega_{\divG{X}}(\divG{Y})\colon \Mor_{\divG{\star}}(\divG{Y},
        \divG{X})\to \HomqcoverBG(\QcohG{X}, \QcohG{Y}).
    \]
    Theorem~\ref{ref:tensorialquotientstacks:thm} can be restated as: the
    composition of the two functors above is an equivalence of categories.
    We will show that both functors are equivalences. For the first, this
    follows from general properties. For the second, this follows from
    certain results of~\cite{HR}, however not directly from their main
    theorem, which requires the stack $\divG{Y}$ to be locally excellent.

    \begin{lemma}
        The functor $[-/\Group]$ is an equivalence of categories.
    \end{lemma}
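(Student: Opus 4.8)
The plan is to reduce the statement to standard $2$-categorical descent along the universal bundle $\star\to\divG{\star}$, and thereby to realize $[-/\Group]$ as the morphism-level part of an equivalence of $2$-categories.

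First I would record the one genuinely geometric input, namely the $2$-cartesian identification
\[
  \divG{T}\times_{\divG{\star}}\star \;\simeq\; T
\]
valid for every $\Group$-scheme $T$, where $T\to\divG{T}$ is the canonical atlas and the identification is compatible with $p_T$. This is immediate from the description of $S$-points recalled above: an $S$-point of the fiber product is a triple consisting of a pair $(\pi\colon P\to S,\ \gamma\colon P\to T)$, the unique $S$-point of $\star$, and a trivialization $P\simeq\Group\times S$ of the underlying bundle; the trivialization reduces $\gamma$ to its restriction along $1\times S$, which is exactly an $S$-point of $T$, and this assignment is functorial and invertible.

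Next I would invoke the fact that pullback along the atlas $\star\to\divG{\star}=B\Group$ is an equivalence of $2$-categories between stacks over $\divG{\star}$ and stacks equipped with a $\Group$-action, with quasi-inverse the quotient-stack construction $\mathcal{W}\mapsto[\mathcal{W}/\Group]$; this is $2$-descent along the $\Group$-torsor $\star\to\divG{\star}$. By the previous paragraph this equivalence carries $\divG{Y}$ and $\divG{X}$, with their structure maps $p_Y,p_X$, to $Y$ and $X$ with their given $\Group$-actions, and the datum $\alpha\colon p_X\circ f\to p_Y$ is precisely what encodes, after pullback, the $\Group$-equivariant structure of the induced morphism of total spaces. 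Since a $2$-equivalence induces an equivalence on every $\Hom$-groupoid, the quasi-inverse $[-/\Group]$ yields an equivalence of groupoids
\[
  [-/\Group]\colon \Mor^{\Group}(Y,X)\ \xrightarrow{\ \sim\ }\ \Mor_{\divG{\star}}(\divG{Y},\divG{X}),
\]
where $\Mor^{\Group}(Y,X)$ denotes $\Group$-equivariant morphisms with $\Group$-equivariant $2$-isomorphisms between them. Finally I would observe that $Y$ and $X$ are schemes, hence $0$-truncated objects with no nontrivial automorphisms, so $\Mor^{\Group}(Y,X)$ carries only identity $2$-morphisms and is the discrete groupoid on the set $\Mor_{\Group}(Y,X)$. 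Composing, $[-/\Group]\colon \Mor_{\Group}(Y,X)\to\Mor_{\divG{\star}}(\divG{Y},\divG{X})$ is an equivalence; unwinding the identifications shows that on objects it is the stated map $\varphi\mapsto([\varphi/\Group],\id)$, while its inverse sends $(f,\alpha)$ to the base change $f\times_{\divG{\star}}\star$.

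The only real content, and the step I expect to require the most care, is the descent equivalence together with the bookkeeping of the isomorphisms $\alpha$: one must verify that a $2$-isomorphism respecting the $\alpha$'s pulls back to exactly a $\Group$-equivariant $2$-isomorphism of maps of schemes, all of which are trivial. This is entirely formal and, crucially, uses no geometric hypotheses on $X$ or $Y$, which is why this half of Theorem~\ref{ref:tensorialquotientstacks:thm} \emph{follows from general properties}, in contrast to the Tannakian half handled via~\cite{HR}.
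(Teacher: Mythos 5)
Your argument is correct, but it proves the lemma by a genuinely different route than the paper. The paper's proof is elementary and self-contained: it first shows that every pair $(f,\alpha)$ is uniquely isomorphic to a \emph{strict} pair (one with $\alpha=\id$), then recovers an equivariant morphism $\varphi=\tau(1_{\Group},-)$ by evaluating a strict functor on the trivial bundle $(\pr_2,\sigma_Y)$, verifies equivariance of $\varphi$ and the identity $f=[\varphi/\Group]$ on trivial bundles through two explicit commutative diagrams, and finishes by ordinary descent, since every principal bundle is locally trivial. You instead reduce everything to two structural facts: the base-change identification $[T/\Group]\times_{[\star/\Group]}\star\simeq T$, and the $2$-categorical descent equivalence between stacks over $B\Group$ and stacks with coherent $\Group$-action, after which $0$-truncatedness of schemes collapses the relevant mapping groupoid to the discrete set $\Mor_{\Group}(Y,X)$. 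What your approach buys is brevity and conceptual clarity: it makes plain that the statement is formal, needs no hypotheses on $X$ and $Y$, and cleanly separates the two mechanisms (torsor descent, discreteness of schemes) that the paper's proof interleaves. What it costs is a substantial black box: the equivalence between stacks over $B\Group$ and $\Group$-stacks requires a careful definition of group actions on stacks with their coherence $2$-cells and a $2$-descent argument; this is standard folklore but of roughly the same order of difficulty as the direct verification the paper carries out --- indeed the paper's strictification step and trivial-bundle evaluation are exactly the Hom-groupoid shadow of that equivalence. If you were to write this up, you should cite a precise reference for the $2$-descent equivalence rather than assert it, and actually perform the unwinding that identifies the composite equivalence with the concretely defined functor $\varphi\mapsto([\varphi/\Group],\id)$, which is the one step you only sketch.
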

    \begin{proof}
        \def\strict{\mathrm{strict}}%
        Call a functor $(f, \alpha)\in \Mor_{\divG{\star}}(\divG{Y},
        \divG{X})$ \emph{strict} if $\alpha = \id$.
        We claim that for every $(f, \alpha)$ there is a uniquely defined
        strict functor $(f_{\strict}, \id)$ isomorphic to $(f, \alpha)$.
        Indeed, for a scheme $S$ and a bundle $\pi\colon P\to S$ with
        a map $\gamma\colon P\to Y$ we have $f((\pi, \gamma)) = (\pi',
        \delta)$ and $\alpha( (\pi, \gamma))$ is an isomorphism
        \[
            \begin{tikzcd}
                X & \ar[l, "\delta"] P'\ar[d, "\pi'"']\ar[r, "\alpha_{(\pi,
                \gamma)}"] & P\ar[ld, "\pi"]\\
                & S &
            \end{tikzcd}
        \]
        so we define $f_{\strict}( (\pi, \gamma))$ as the bundle $(\pi,
        \delta \circ \alpha^{-1}_{(\pi, \gamma)})$. This proves existence.
        Uniqueness follows directly the definition: the only isomorphism of strict
        functors is the identity.
        The above strictification shows that we only need to prove that
        $[-/\Group]$ induces a bijection onto the set of strict functors.
        Consider the trivial bundle $(\pr, \sigma_Y)$. Then $[\varphi/\Group](\pr,
        \sigma_{Y}) = (\pr, \tau)$ where $\tau(g, y) = g\varphi(y)$, in particular
        $\varphi = \tau(1_{\Group}, -)$. This shows that $[-/\Group]$ is injective.
        Note that we use strictness here, to have a canonical section
        $(1_{\Group}, -)$.
        It remains to show that $[-/\Group]$ is onto the set of strict
        functors. Consider a strict
        functor $(f, \id)$, let $(\pr, \tau) = f(\pr, \sigma_Y)$
        and define $\varphi := \tau(1_{\Group}, -)$.
        Consider a scheme $S$, a trivial bundle $\pr_2\colon \Group \times S\to
        S$ and an equivariant map $\gamma\colon \Group \times S\to Y$.
        We have commutative diagrams
        \[
            \begin{tikzcd}
                \Group\times \Group \times S\ar[d, "\pr_{23}"] \ar[r, "\mu\times\id_S"]& \Group
                \times S\ar[r, "\gamma"]\ar[d,"\pr_2"]& Y &
                \Group\times \Group \times S\ar[d, "\pr_{23}"] \ar[r,
                "\id_{\Group}\times \gamma"]& \Group
                \times Y\ar[r, "\sigma_Y"]\ar[d,"\pr_2"]& Y\\
                \Group \times S \ar[r, "\pr_2"]& S & &
                \Group \times S \ar[r, "\gamma"]& Y &
            \end{tikzcd}
        \]
        The bundles in the leftmost columns are equal and their maps to $Y$
        agree. Let $f(\pr_2, \gamma) = (\pr_2, \delta)$. Applying $f$ to both
        diagrams above, we get that $\delta(gh, s) = \tau(g, \gamma(h, s))$ for all
        $g,h\in \Group$ and $s\in S$. For $g = 1_{\Group}$ we
        get $\delta = \varphi\circ
        \gamma$ and
        thus $f$ maps $(\pr_2, \gamma)$
        to $(\pr_2, \varphi\circ \gamma)$. Applying this for
        $\gamma = \sigma_Y$ we deduce that $\varphi$ is equivariant. Therefore, the
        functors
        $f$ and $[\varphi/\Group]$ agree on all trivial bundles. By
        descent, they agree everywhere.
    \end{proof}

    \begin{proposition}[tensoriality]
        Suppose that $X$ is quasi-compact, quasi-separated. Then the functor
        $\omega_{\divG{X}}$ is an equivalence.
    \end{proposition}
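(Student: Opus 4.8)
The plan is to establish that $\omega_{\divG{X}}$ is fully faithful and essentially surjective, invoking the coherent Tannaka duality of Hall--Rydh~\cite{HR} while disposing of its excellence hypothesis. Throughout we use that, since $\Group$ is affine and $X$ is quasi-compact and quasi-separated, the target stack $\divG{X}$ is quasi-compact, quasi-separated and has affine diagonal; these are exactly the conditions on the target under which~\cite{HR} applies. The only hypothesis we cannot meet is that the \emph{source} $\divG{Y}$ be locally excellent, and the whole point is to remove it.

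First I would dispose of full faithfulness, which is the robust half and needs no excellence. Both the faithfulness and the fullness of the functor $(f,\alpha)\mapsto(f^*,\alpha_f)$ hold as soon as the target has affine diagonal: a $2$-morphism $\beta\colon f\to f'$ is recovered from a monoidal natural isomorphism $B\colon f^*\to (f')^*$, and this recovery, together with the injectivity of $(f,\alpha)\mapsto(f^*,\alpha_f)$ on objects up to $2$-isomorphism, can be checked after pulling back along the smooth torsor atlas $Y\to\divG{Y}$, where it reduces to the corresponding statement for morphisms from the scheme $Y$ into the affine-diagonal stack $\divG{X}$. This is precisely the content of the full-faithfulness part of~\cite{HR}, which is valid in our generality.

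The essential surjectivity is the heart of the matter. Given a pair $(F,A)$, the plan is to eliminate the source stack in favour of a scheme and then an excellent affine scheme. Since $\divG{X}$ has affine diagonal, the presheaf $\Mor(-,\divG{X})$ satisfies fppf descent, so a morphism $\divG{Y}\to\divG{X}$ is the same as a morphism $Y\to\divG{X}$ from the atlas together with descent data along $Y\times_{\divG{Y}}Y\rightrightarrows Y$; correspondingly, on quasi-coherent sheaves, $(F,A)$ is determined by the composite cocontinuous tensor functor $\QcohG{X}\to\Qcoh(Y)$ equipped with its descent datum. Zariski descent then reduces the construction of $Y\to\divG{X}$ to the case $Y=\Spec(B)$ affine. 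For such $B$ that is of finite type over $\kk$, hence excellent, the Tannaka duality of~\cite{HR} produces the required morphism. For general $B$, I would write $B=\colim_\alpha B_\alpha$ as the filtered colimit of its finite-type $\kk$-subalgebras, apply the excellent case to each $\Spec(B_\alpha)$, and pass to the limit; cocontinuity of the tensor functor guarantees that the resulting morphisms are compatible and assemble, after re-gluing the descent data, to the desired $(f,\alpha)$ with $\omega_{\divG{X}}(f,\alpha)\simeq(F,A)$.

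The main obstacle I anticipate lies in this last reduction: showing that the cocontinuous tensor functor $\QcohG{X}\to\Qcoh(\Spec B)$, together with the rigidification $A$ over $\divG{\star}$, descends compatibly to the excellent stages $\Spec(B_\alpha)$, and that the morphisms obtained there are compatible with both the transition maps and the descent data needed to glue back up to $\divG{Y}$. Since $X$ need not be of finite type over $\kk$, one cannot simply quote a mapping-space approximation statement for the target $\divG{X}$; instead one must trace the relevant finiteness and approximation arguments through the proof of~\cite{HR}, which is why we appeal to the details of their argument rather than to their main theorem as a black box. Once this compatible factorization is in hand, the remaining verifications---preservation of the $\divG{\star}$-structure and of the isomorphism $A$---are formal.
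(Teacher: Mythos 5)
Your full-faithfulness step coincides with the paper's (both reduce to \cite[Proposition~4.8(i)]{HR}, which needs only that the target stack has affine diagonal), but your essential-surjectivity argument has a genuine gap, and it sits exactly at the step you flag as ``the main obstacle''. Having reduced to an affine source $\Spec(B)$ and written $B=\colim_\alpha B_\alpha$ over finite-type (hence excellent) subalgebras, you must descend the cocontinuous tensor functor $\QcohG{X}\to\Qcoh(\Spec(B))$ to the stages $\Spec(B_\alpha)$. There is no mechanism for this: restriction of scalars along $B_\alpha\into B$ is not monoidal, so $F$ admits no natural factorization through $\Qcoh(\Spec(B_\alpha))$; and working instead at the level of morphisms, the approximation $\colim_\alpha\Hom(\Spec(B_\alpha),[X/\Group])\to\Hom(\Spec(B),[X/\Group])$ being an isomorphism would require $[X/\Group]$ to be locally of finite presentation over $\kk$, which fails here because $X$ is only assumed quasi-compact and quasi-separated --- indeed, in the application of this proposition $X$ is Noetherian but not of finite type over $\kk$, which is precisely why the excellence hypothesis of the main theorem of \cite{HR} cannot be met in the first place. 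Excellence of the source is used essentially in the proof of \cite{HR} (their argument passes through completions of local rings of the source), so it cannot be ``traced away''; your plan therefore defers the entire difficulty to a step that, as stated, cannot be carried out.

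The paper's route around this is different in kind: it never approximates the source, but instead \emph{un-stacks the target} using the rigidification $A$, which is the key input rather than a final formality as your last sentence suggests. Since $A$ identifies $F\circ p_X^*$ with $p_Y^*$, the pushforward $s_*\OO_X$ of the atlas $s\colon X\to[X/\Group]$, which equals $p_X^*(\Lambda)$ for $\Lambda=H^0(\Group,\OO_{\Group})$, is carried by $F$ to $\Lambda\tensor_{\kk}\OO_Y$; hence $\Spec_{[Y/\Group]}F(s_*\OO_X)\simeq Y$. By \cite[Corollary~3.6]{HR}, $F$ then induces a cocontinuous tensor functor $\Qcoh(X)\to\Qcoh(Y)$ between honest schemes, where the tensoriality theorem of \cite{Brandenburg__Chirvasitu__tensoriality} applies with no noetherian, finite-type, or excellence hypotheses (only $X$ quasi-compact and quasi-separated), and descent along the smooth atlas $Y\to[Y/\Group]$, i.e.\ \cite[Lemma~4.2(iii)]{HR}, upgrades the resulting morphism to $f\colon[Y/\Group]\to[X/\Group]$ with $F\simeq f^*$. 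This trade of the stacky target for a schematic one, made possible only by the rigidification over $[\Spec(\kk)/\Group]$, is the idea missing from your proposal.
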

    \begin{proof}
        The proof closely follows the argument in~\cite[Theorem~4.10]{HR}.
        The group $\Group$ is
        affine, so every quotient stack by $\Group$ has affine diagonal and
        then~\cite[Proposition~4.8(i)]{HR} implies that
        $\omega_{\divG{X}}(\divG{Y})$ and $\omega_{\divG{\star}}(\divG{Y})$ are fully faithful.
        To prove that $\omega_{\divG{X}}(\divG{Y})$ is
        essentially surjective, pick an element
        \[
            (F, A)\in \HomqcoverBG(\QcohG{X}, \QcohG{Y}).
        \]
        The group $\Group$ is smooth and we have the smooth affine atlas
        $s\colon X\to \divG{X}$. Let $\Lambda$ be the coordinate ring of
        $\Group$ with its natural linearization. The element
        $s_*\OO_X$ corresponds in $\QcohG{X}$ to the sheaf $\Lambda
        \tensor_{\kk} \OO_X \simeq p^*_X(\Lambda)$. Since $Fp_X^*$ is
        isomorphic to $p_Y^*$, the sheaf of algebras $F(s_*\OO_X)$ is isomorphic to
        $\Lambda \tensor_{\kk} \OO_Y\in \QcohG{Y}$.
        The stack $\Spec_{\divG{Y}}F(s_*\OO_X)$ is
        isomorphic to $\Spec_{\divG{Y}}(\Lambda\tensor_{\kk} \OO_Y) = Y$.
        By general nonsense and as explained in~\cite[Corollary~3.6]{HR} the functor $F$ induces a
        cocontinuous tensor functor
        \[
            F'\colon \Qcoh(X) \simeq \Qcoh(\Spec_{\divG{X}}(s_*\OO_X))
        \to \Qcoh(\Spec_{\divG{Y}}F(s_*\OO_X)) \simeq \Qcoh(Y).\]
        Now
        by~\cite{Brandenburg__Chirvasitu__tensoriality}, since $X$ is
        quasi-compact, quasi-separated, it follows that $F'$ is isomorphic to
        a pullback functor.
        The map $Y\to \divG{Y}$ is smooth, so
        by~\cite[Lemma~4.2(iii)]{HR} the functor $F$
        is isomorphic to a pullback functor of some $f\colon
        \divG{Y}\to\divG{X}$. The isomorphism $F \simeq f^*$ composed with $A$ gives an
        isomorphism
        $\alpha\colon f^*p_X^*\to p_Y^*$. The pair $(f, \alpha)$ is isomorphic to $(F, A)$, which
        concludes the proof.
    \end{proof}

\end{document}